\makeatletter\@addtoreset{equation}{section}\makeatother
\newtheorem{lemma}{Lemma}
\newtheorem{theorem}{Theorem}
\newtheorem{remark}{Remark}
\newenvironment{proof}[1][.]%
{\begin{trivlist}\item[]\textbf{Proof#1 }}%
 {\hspace*{\fill}$\rule{0.3\baselineskip}{0.35\baselineskip}$\end{trivlist}}
\DeclareMathOperator{\sech}{sech}
\DeclareMathOperator{\arccosh}{arccosh}
\newcommand{\bu}{\mathbf{u}}
\title{Existence of stationary fronts in a system of two coupled wave equations with spatial inhomogeneity}
\author{Jacob Brooks, Gianne Derks, David J.B. Lloyd}
\date{\today}
\begin{document}
\maketitle

\begin{abstract} 

  We investigate the existence of stationary fronts in a coupled
  system of two sine-Gordon equations with a smooth, ``hat-like"
  spatial inhomogeneity. The spatial inhomogeneity corresponds to a
  spatially dependent scaling of the sine-Gordon potential term. The
  uncoupled inhomogeneous sine-Gordon equation has stable stationary
  front solutions that persist in the coupled system.  Carrying out a
  numerical investigation it is found that these inhomogeneous sine-Gordon fronts loose stability, provided the coupling
  between the two inhomogeneous sine-Gordon equations is strong
  enough, with new stable fronts bifurcating. In order to analytically study the bifurcating fronts, we first
  approximate the smooth spatial inhomogeneity by a piecewise constant
  function. With this approximation, we prove analytically the existence of a
  pitchfork bifurcation. To complete the argument, we prove that transverse
  fronts for a piecewise constant inhomogeneity persist for the smooth
  ``hat-like" spatial inhomogeneity by introducing a fast-slow structure and using geometric singular
  perturbation theory.
\end{abstract}
	
\begin{section}{Introduction} 
  In this paper, we study the existence of front solutions in the
  following system of two spatially inhomogeneous sine-Gordon
  equations with coupling
\begin{align}
\begin{split}
	\theta_{xx} - \theta_{tt} &= (1-d\rho(x))\sin \theta -\alpha\sin(\theta-\psi), 
	\\
	\psi_{xx} - \psi_{tt} &= (1-d\rho(x))\sin \psi -\alpha \sin (\psi-\theta),
\end{split}
\label{coupledsystem}
\end{align}
where $\theta = \theta(x,t),\psi = \psi(x,t)$, $\alpha\in\mathbb{R}$
is the coupling parameter and $d \in \mathbb{R}$ measures the strength
of the spatial inhomogeneity $\rho(x).$ We consider
the ``hat-like'' spatial inhomogeneity
\begin{equation}\label{e:smooth_rho}
\rho(x;\Delta, \delta) := \frac{\tanh((x+\Delta)/\delta)+\tanh((-x+\Delta)/\delta)}{2},
\end{equation}
with $0<\delta\ll1$, $\Delta>0$.  Since $0<\delta \ll 1,$ apart from a
small region near $|x| =\Delta$, the inhomogeneity is near zero for
$|x|>\Delta$ and near~1 for $|x|<\Delta$; see Figure~\ref{step1}(a).
Thus the variable $\Delta$ measures the half width of the
$\rho(x;\Delta, \delta)$ ``hat''.  The small parameter $\delta$
determines the steepness of the inhomogeneity's jump. As $\delta \to 0,$ $\rho(x; \Delta, \delta)$ converges pointwise to the piecewise constant function 
\begin{equation}
\rho_{0}(x;\Delta)= 
\begin{cases}
0,&  \vert x \vert > \Delta,  \\
1,     &  \vert x \vert < \Delta,
\end{cases} 
\label{step}
\end{equation}
(see Figure~\ref{step1}(b)) which will also be considered in this paper.
\begin{figure}
	\centering
	\includegraphics[scale=0.75]{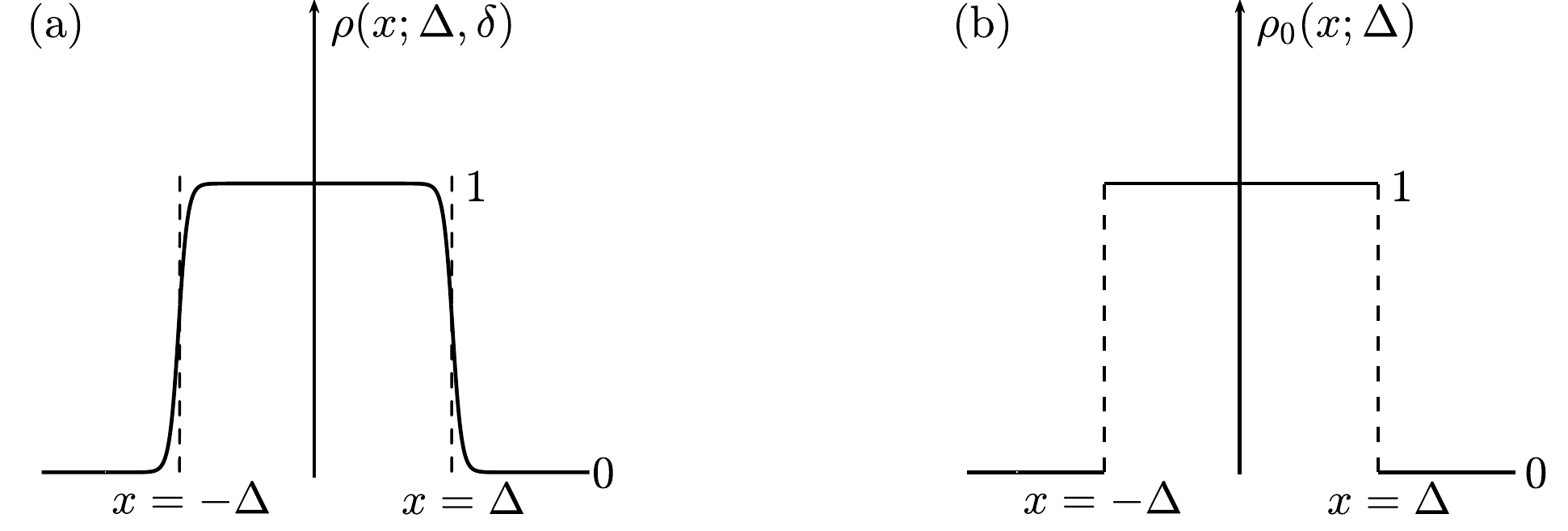}
	\caption{(a) is a sketch of the smooth ``hat-like" spatial inhomogeneity $\rho(x;\Delta,\delta)$ whilst (b) is a sketch of the piecewise constant inhomogeneity $\rho_{0}(x;\Delta).$ Note the dashed lines correspond to $x=\pm\Delta,$ illustrating $\rho(\pm \Delta;\Delta,\delta) \approx 1/2.$}
	\label{step1}
\end{figure}

The coupled system~(\ref{coupledsystem}) can be interpreted as a
continuous approximation of two pendulum chains interacting
with one another where the mass of the pendulums is allowed to change. The dependent variables $\theta$ and $\psi$ represent the angles of the two pendulum chains, the parameter $\alpha$ corresponds to the coupling strength between the two chains and the spatial inhomogeneity $\rho(x)$ represents a change in mass of the pendulums. The coupled system without spatial inhomogeneity $(d=0)$ was proposed as an elementary model for two parallel adatomic
chains with small local interaction in \cite{Braun1988}. Additionally the coupled system has been studied as a simple model of the DNA double
helix~\cite{Homma1984, Yakushevich1989, Derks2011}, where the DNA chain is represented as a coupled pendulum chain. Furthermore, in the context of DNA it was proposed in \cite{Derks2011}, that the inhomogeneity $(d \ne 0)$ in the coupled system represents the presence of an RNA protein, an important mediator in DNA copying.

When $u(x,t)= \theta(x,t) = \psi(x,t)$ and $d=0$, the coupling and
inhomogeneous terms in the system (\ref{coupledsystem}) vanish. As a
result the system~(\ref{coupledsystem}) reduces to the
celebrated sine-Gordon equation~\cite{Bour1862,Frenkel1939}
\begin{equation*}
u_{xx}-u_{tt} = \sin (u).
\end{equation*}
The sine-Gordon equation is fully integrable and possesses a family of
travelling front solutions
\begin{equation} \label{travelling} u_{\rm sG}^{\pm}(x,t;c) =
  4\arctan\bigg(\exp\bigg(\frac{\pm1}{\sqrt{1-c^2}}{(x-ct+x^*)}\bigg)\bigg),
  \quad \vert c \vert <1, \quad x^* \in \mathbb{R}.
\end{equation}
Here $u_{\rm sG}^{+}$ represents the monotonic increasing front,
whilst $u_{\rm sG}^{-}$ the monotonic decreasing one. Both fronts are
centred at $x=-x^*$ when $t=0$ and move with constant speed $c.$ Thus when $c=0$ the fronts are stationary. Note that
$u_{\rm sG}^-= 2\pi - u_{\rm sG}^{+}$, which reflects the
$u\mapsto 2\pi-u$ symmetry of the sine-Gordon equation. 

From an application point-of-view, understanding front solutions and
their dynamics is of special interest. Recent research into the
interaction of travelling sine-Gordon fronts with finite length
spatial inhomogeneities has produced fascinating
results. In~\cite{Piette2007} Piette and Zakrzewski studied the scattering
of (\ref{travelling}) in the
inhomogeneous sine-Gordon equation
\begin{equation}\label{e:sine_Gordon}
u_{xx}-u_{tt} = (1-d\rho_{0}(x;\Delta))\sin (u)
\end{equation}
with the piecewise constant spatial inhomogeneity (\ref{step}). Starting the travelling front far away from the inhomogeneity they noted several different phenomena dependent on the initial speed and $d,\Delta.$ Fix $d, \Delta>0$, then for values of $c$ less than some critical one the travelling front would not pass and get stuck in the inhomogeneity. For higher speeds the front could pass through the inhomogeneity. Interestingly they noted some speed values less than the critical one that fronts could bounce back out of the inhomogeneity. More recently, Goatham \textit{et al.} studied the scattering of the travelling sine-Gordon fronts~(\ref{travelling}) in~(\ref{e:sine_Gordon}) with smooth non-steep spatial inhomogeneities; see \cite{Goatham2011}.

It has also been shown the existence of stationary fronts plays a role
in studying the interaction of travelling fronts with spatial
inhomogeneities~\cite{Derks2012}. This is because stationary front
solutions correspond to fixed points in the dynamical systems approach
to the wave equation. The existence of stationary front solutions to the
inhomogeneous sine-Gordon equation~(\ref{e:sine_Gordon}), with
boundary conditions $u(-\infty)=0$ and $u(+\infty)=2\pi,$ for all
$d\in\mathbb{R}$ and $\Delta>0$ was established in~\cite{Derks2012}. We
denote these fronts by $u_0 (x;d,\Delta)$, hence
$u_0 (x;d=0,\Delta)=u_{\rm sG}^+ (x,t;0)$. In the special case, $d=1$, Derks {\it et
  al.} \cite{Derks2012} also gave the explicit expression for the front solutions,
\begin{equation} \label{q1front}
u_0(x;d=1,\Delta)= 
\begin{cases}
4\arctan(e^{x+x^*}),&  x < -\Delta \\
\pi +\sqrt{2h}x ,     & \vert x \vert < \Delta \\
4\arctan(e^{x-x^*}), & x> \Delta,
\end{cases} 
\end{equation}
where $0<h(\Delta)< 2$ is uniquely determined by 
\begin{equation}
  \label{eq:Delta_xstar}
  \Delta = \arccos (h-1)/\sqrt{2h}	\quad\mbox{and}\quad
  x^* = \ln (\tan(\arccos(1-h)/4))+\Delta.
\end{equation}
These solutions persist in the coupled system (\ref{coupledsystem}) when $\delta = 0$.

Returning to the full coupled system~(\ref{coupledsystem}), when there
is no spatial inhomogeneity, i.e. $d=0$, the sine-Gordon fronts
$\theta(x,t)=\psi(x,t)=u_{\rm sG}^\pm (x,t;c)$ are stable if
$-1/2<\alpha<0$ and unstable if $0<\alpha<1/2;$ see
\cite{Braun1988}. We illustrate this instability for the stationary
front $u_{\rm sG}^{+}(x,t;0)$ in a numerical time simulation of
(\ref{coupledsystem}) with $d=0$ and $\alpha=0.1$ in
Figure~\ref{spacetimeplots}(a). The instability manifests itself by
the stationary fronts travelling apart. We now consider a numerical
time simulation of the stationary front $u_{\rm sG}^{+}(x,t;0)$ in
(\ref{coupledsystem}) when $d\ne0;$ see
Figure~\ref{spacetimeplots}(b-c). Fixing $0<\delta\ll 1, d>0$ and
$\Delta>0$ the stationary fronts with $\theta=\psi=u_{\rm sG}^+$ first
adapt themselves to account for the presence of a spatial
inhomogeneity then two different phenomena can occur. For small values
of $\alpha>0$ the stationary fronts are stable; see
Figure~\ref{spacetimeplots}(b). On the other hand, for larger values
of $\alpha<1/2$ the stationary fronts are unstable and bifurcate to
new stationary fronts with $\theta\neq \psi$; see
Figure~\ref{spacetimeplots}(c). When plotting in the coordinates
 \begin{equation}\label{e:uv_coords}
   u=\frac{\theta+\psi}{2} \quad\mbox{and}\quad v = \frac{\theta-\psi}{2}
\end{equation}
one starts to see how this bifurcation occurs. The case
$\theta(x) = \psi(x)$ in the original variables corresponds to
$(u(x),v(x))=(\theta(x),0)$ in the new ones. For fixed $d>0,$ we see
in Figure~\ref{spacetimeplots}(b) that for small values of $\alpha$,
$(u(x),v(x))=(\theta(x),0)$ is stable, i.e. the inhomogeneity has
stabilized the sine-Gordon front in the coupled system. For larger
values of $\alpha$, Figure~\ref{spacetimeplots}(c) shows that a
bifurcation has happened: the effect of the coupling initially
dominates the stabilizing effect of the inhomogeneity and the $\theta$
and $\psi$ components start to travel apart as in (a), but soon
afterwards, the inhomogeneity dominates again and the fronts get
stopped. This results in $v$ becoming a small localised pulse.
\begin{figure}
	\captionsetup[subfigure]{labelformat=empty}
	\centering
	\subfloat[(a)]{\includegraphics[scale=0.8]{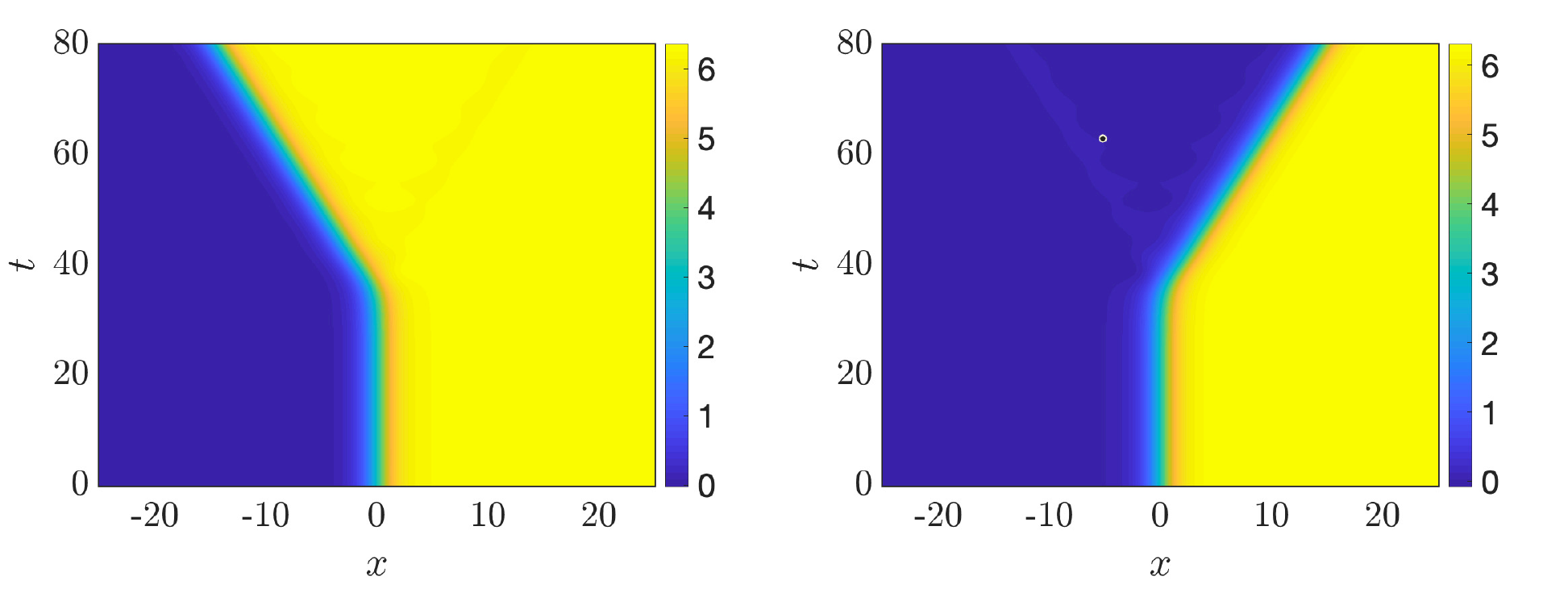}}\hspace{-10pt}\\
	\subfloat[(b)]{\includegraphics[scale=0.8]{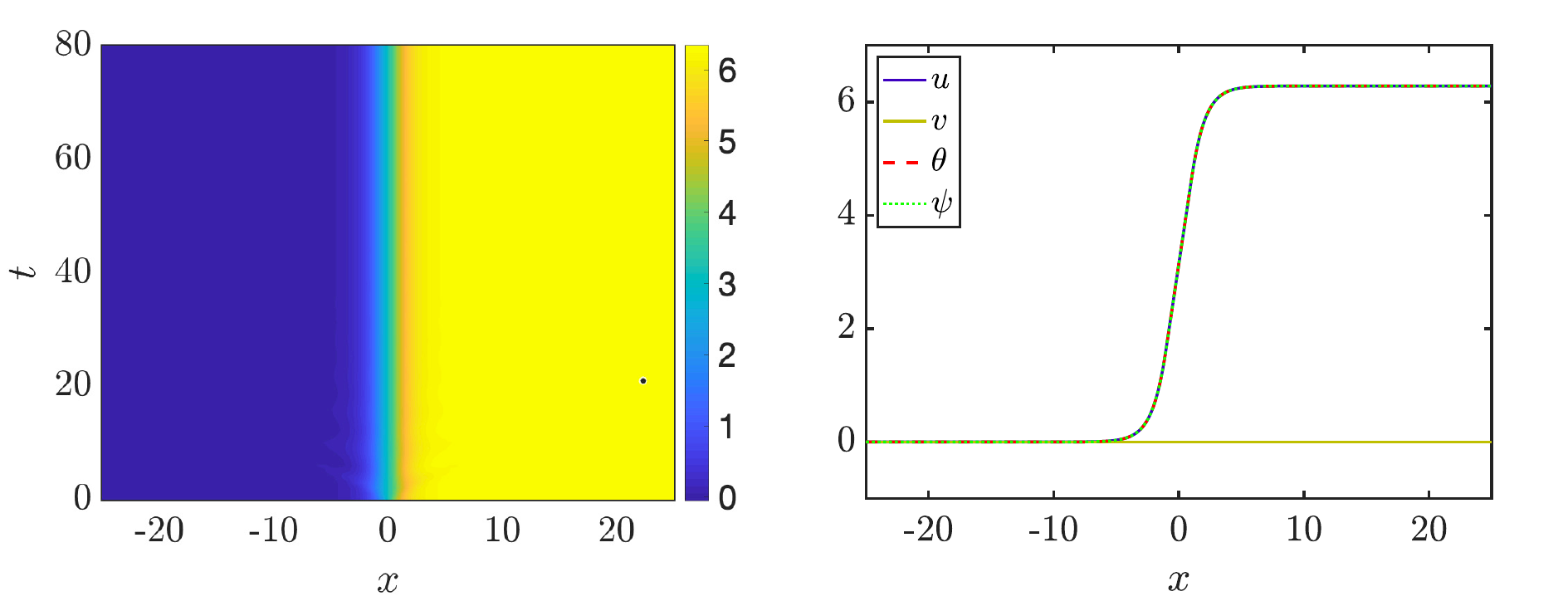}} \hspace{-10pt} \\ 
	\subfloat[(c)]{\includegraphics[scale=0.8]{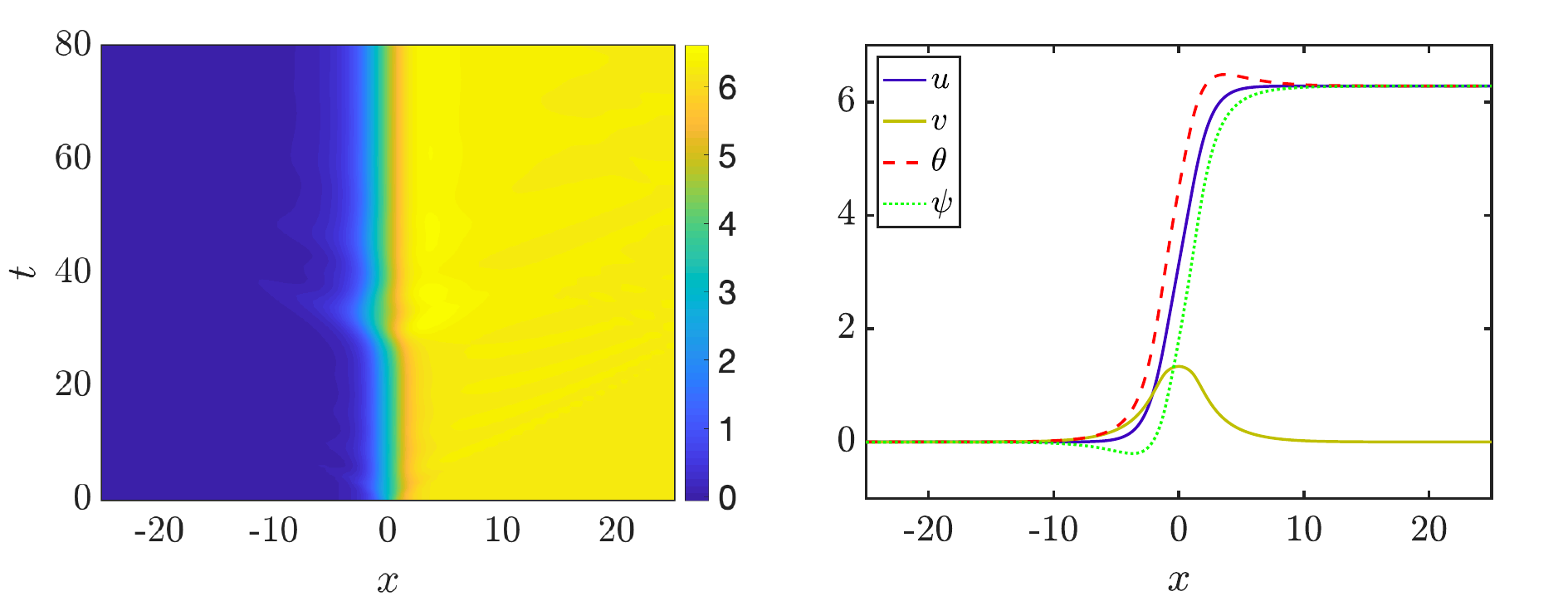}}\\
	\caption{(a) corresponds to space-time plots of the dynamics
          of the initial condition consisting of the stationary sine-Gordon
          front solutions in the system (\ref{coupledsystem}) with
          $\alpha=0.1$ and no spatial inhomogeneity, i.e. $d=0$. The
          left panel corresponds to $\theta(x,t)$ and the right
          $\psi(x,t)$. After a while, the fronts loose stability and
          begin to travel apart.
          The left panel of~(b) correspond to a space-time plot of the
          $\theta(x,t)$ dynamics of the same initial condition, now
          with the spatial inhomogeneity $\delta=1/15$ and $\Delta= d =1$, while keeping  $\alpha=0.1$. The right panel corresponds to the
          solution profile at $t=80$.
          The left panel of~(c) corresponds to a space-time plot of
          the $\theta(x,t)$ dynamics, again with the same initial
          condition, now for a stronger coupling $\alpha=0.4$, and the
          same inhomogeneity $\delta = 1/15$ and $\Delta= d =1$.
          The right panel corresponds to the solution profile at
          $t=80.$ Note that as we are interested in stationary
          solutions a small damping term was added in (b) and (c) to
          suppress the additional radiation generated by the initial
          adaptation in the Hamiltonian system.}
	\label{spacetimeplots}
\end{figure}

The main aim of this paper is to provide a detailed numerical and
analytical understanding of the bifurcation shown in Figure
(\ref{spacetimeplots})(b-c). We will do this by studying the existence
of stationary fronts to the coupled system~(\ref{coupledsystem}) when
$0<\alpha<1/2$, $d>0$, $\Delta>0$ and $0\leqslant \delta \ll 1$. The
restriction on $\alpha$ is due to the fact that the steady state
$(\theta,\psi)=(2\pi,2\pi)$ is temporally unstable for $\alpha> 1/2$. Note that the $\delta = 0$ case corresponds to the piecewise constant inhomogeneity, i.e. $\rho(x) = \rho_0(x;\Delta)$ given by (\ref{step}). Since the
sine-Gordon symmetry persists for the full coupled system
(\ref{coupledsystem}) we can restrict ourselves to the monotonic
increasing fronts. It is helpful to keep the change of
variables~(\ref{e:uv_coords}). Consequently, the existence of
stationary front solutions in the coupled inhomogeneous
system~(\ref{coupledsystem}) is equivalent to the existence of
solutions to the Boundary Value Problem (BVP)
\begin{gather}
\begin{split}
u_{xx} &= (1-d\rho(x))\sin u \cos v, 
\\
v_{xx} &= (1-d\rho(x)) \sin v \cos u -\alpha \sin 2v.\label{alphasys}
\end{split}
	\\
	\lim_{x\to-\infty}(u(x),v(x))=(0,0) \quad \text {and} \quad \lim_{x\to+\infty}(u(x),v(x))=(2\pi,0). \label{BCS}
\end{gather}
When $v(x) = 0,$ the system (\ref{alphasys}) reduces to the stationary
inhomogeneous sine-Gordon equation
\begin{equation}\label{e:p_sine_Gordon}
u_{xx} = (1-d\rho(x))\sin u, \quad
\lim_{x\to-\infty}u(x)=0 \quad \text{and} \quad \lim_{x\to\infty}u(x)=2\pi.
\end{equation} 

An obvious starting point for the analysis to understand the
bifurcation occurring in Figure~\ref{spacetimeplots} is to build on
the work on the uncoupled inhomogeneous sine Gordon equation
in~\cite{Derks2012} and consider the case $d=1$, $\delta=0$ (the
piecewise constant inhomogeneity $\rho_0(x;\Delta)$ given by (\ref{step})) and
carry out a Lyapunov-Schmidt reduction analysis for the explicit front
solution~(\ref{q1front}). As the front is a non-constant state, this poses some challenges
to be overcome. The next step would be to extend the existence for the
piecewise constant inhomogeneity~$\rho_0(x;\Delta)$ to the smooth inhomogeneity $\rho(x; \Delta, \delta)$~(\ref{e:smooth_rho}). Whilst for fixed~$\Delta$ the function
$\rho(x;\Delta, \delta)$ converges pointwise to $\rho_0(x;\Delta)$ as
$\delta \to 0$, the link between the front~(\ref{q1front}) and front
solutions in (\ref{coupledsystem}) is not immediately obvious.

In order to overcome this issue, we adapt an approach by Goh and
Scheel~\cite{Goh2016}. 
They study fronts in the complex Ginzburg-Landau equation with a
smooth single step inhomogeneity and characterise this
inhomogeneity with an additional Ordinary Differential Equation (ODE).  Following this approach, we
extend the coupled system with the following additional ODE for the inhomogeneity
$\rho$:
\[
\delta^2\rho_{xx} = 4\rho^3 - (6+4\epsilon)\rho^2 + 2(1+\epsilon)\rho.
\]
When $0 < \epsilon, \delta \ll 1$, this ODE has explicit solutions
where~(\ref{e:smooth_rho}) is the leading order approximation
and~$\epsilon$ can be expressed in terms of $\Delta$ and $\delta$
($\epsilon = \mathcal{O}(e^{-2\sqrt{2}\Delta/\delta})$).  Including this ODE
in the system (\ref{alphasys}) turns the problem into a fast-slow dynamical system
where geometric singular perturbation theory can be applied and
existence of solutions can be proved for fixed $\Delta$ and
$0<\delta\ll1$. The key part of the geometric singular perturbation
theory is to understand the singular limit $\delta \to 0$, where
$\rho$ is determined by an algebraic equation, then use Fenichel's
theorems \cite{Fenichel1979} to prove persistence when
$0<\delta \ll1.$

We have four main results. The first is a systematic numerical
investigation of the bifurcation illustrated in
Figure~\ref{spacetimeplots}(b--c) using numerical path following in the
$(\alpha,d,\Delta)$ parameter space. In particular, this numerical
investigation allows us to explore several limiting cases where
analysis is possible. With this analysis, we obtain two theorems about
the location of the bifurcation from the sine-Gordon front and the
emerging bifurcating states using the piecewise constant
$\rho_0(x;\Delta)$~(\ref{step}). Finally, we prove that the fronts
found for a piecewise constant inhomogeneity $\rho_0(x;\Delta)$
persist for the smooth inhomogeneity
$\rho(x;\Delta, \delta)$~(\ref{e:smooth_rho}) for $0<\delta\ll1$.

%
%

The structure of this paper is as follows. Section
\ref{numericalinvestigation} presents a numerical investigation into
the BVP~(\ref{alphasys}-\ref{BCS}). Starting with a solution of the
form $(u(x),v(x))=(u(x),0)$, we show the existence of a pitchfork
bifurcation at which $v(x)$ becomes non-zero in the parameter space
with $\alpha\in (0, 1/2)$ and $d,\,\Delta>0$. In Section
\ref{diagramanalysis} we use the piecewise constant inhomogeneity
$\rho_0(x;\Delta)$ to determine an analytical expression for the
bifurcation locus in case $d=1$ and derive approximations for the
bifurcation locus observed in Section
\ref{numericalinvestigation} in the cases $d$ large and $\Delta$
large. Using the bifurcation locus expression and the front
solution (\ref{q1front}), we employ Lyapunov-Schmidt reduction to show
the existence of a pitchfork bifurcation and approximate the bifurcating
solutions in Section \ref{coupledsys} for the case $d=1$. In Section
\ref{hamslowfast} we use regular and singular perturbation theory to
show that if solutions exist with the inhomogeneity $\rho_0(x;\Delta)$,
then they persist for the smooth inhomogeneity
$\rho(x;\Delta, \delta).$ This result rigorously justifies
comparisons between the numerics and the analysis made throughout the
paper. Finally, in Section \ref{discussion} we end with a summary of
the main results and a discussion of further research.
\end{section}


\begin{section}{Numerical bifurcation investigation} \label{numericalinvestigation}

  In this section we numerically investigate a bifurcation in the
  inhomogeneous coupled sine-Gordon BVP (\ref{alphasys}-\ref{BCS})
  from the solution state $(u(x),v(x))=(u(x),0)$ to one where
  $v(x) \ne 0$. Recall (\ref{alphasys}) has four parameters: the
  coupling parameter~$\alpha$, the strength of the inhomogeneity~$d$,
  the steepness~$\delta$ and the width $\Delta$. Throughout this section we keep
  the steepness parameter $\delta=1/15$ fixed. First fixing
  $d=\Delta=1$, we determine a bifurcation point in the remaining
  parameter $\alpha$ whereby $v(x)$ undergoes a pitchfork
  bifurcation. After this, we keep $d=1$ fixed but consider any $\Delta>0$
  and plot the corresponding bifurcation diagram in the
  $(\alpha,\Delta)$ plane. We finish this section by showing the
  pitchfork bifurcation occurs for any $d> 0$ and give plots in the
  $(d,\Delta)$ plane for various fixed values of $\alpha\in(0,1/2)$,
  illustrating the existence of a two dimensional bifurcation manifold
  in the $(\alpha,d, \Delta)$ parameter space. 

  To start this numerical bifurcation section, we discuss how to set
  up the problem for numerical investigation.

\subsection{Implementation}
We will study the BVP~(\ref{alphasys}-\ref{BCS}) using
AUTO07p~\cite{AUTO}. AUTO07p requires us to re-write~(\ref{alphasys})
as a first order ODE system. Hence we consider
	\begin{align} \label{numericsystem}
	u_{x} &= p, \nonumber
	\\
	p_{x} &= (1-d\rho(x;\Delta, \delta))\sin u \cos v, \nonumber
	\\
	v_{x} &= q, 
	\\
	q_{x} &= (1-d\rho(x;\Delta, \delta))\sin v \cos u -\alpha\sin 2v, \nonumber
\nonumber
	\end{align}
        where the smooth inhomogeneity~$\rho(x;\Delta, \delta)$ is
        defined in~(\ref{e:smooth_rho}). Note that AUTO07p is unable
        to deal with the piecewise constant approximation
        $\rho_0(x;\Delta)$ of $\rho(x;\Delta, \delta)$.  The dynamics
        of (\ref{numericsystem}) are centred at $x=0$ however AUTO07p
        requires us to consider the dynamics on a positive spatial
        interval. Thus we apply the spatial translation
        $\tilde{x}=x+50$ to (\ref{numericsystem}) which centres the
        dynamics at $\tilde{x}=50.$ We consider the dynamics over the
        finite interval $\tilde{x}\in [0,100]$ with boundary
        conditions $(u,p,v,q)(\tilde{x}=0) = (0,0,0,0)$ and
        $ (u,p,v,q)(\tilde{x}=100) = (2\pi,0,0,0).$ When plotting the data we
        have reverted the shift transformation (consider
        $x=\tilde{x}-50$) so that it is once again centred at $x=0$
        and satisfies (\ref{numericsystem}). Due to the spatial
        inhomogeneity no phase condition is needed. Finally, we use standard
        AUTO07p tolerances as detailed in~\cite{AUTO}.
    
    \begin{figure}
    	\centering
    	\includegraphics[scale=0.57]{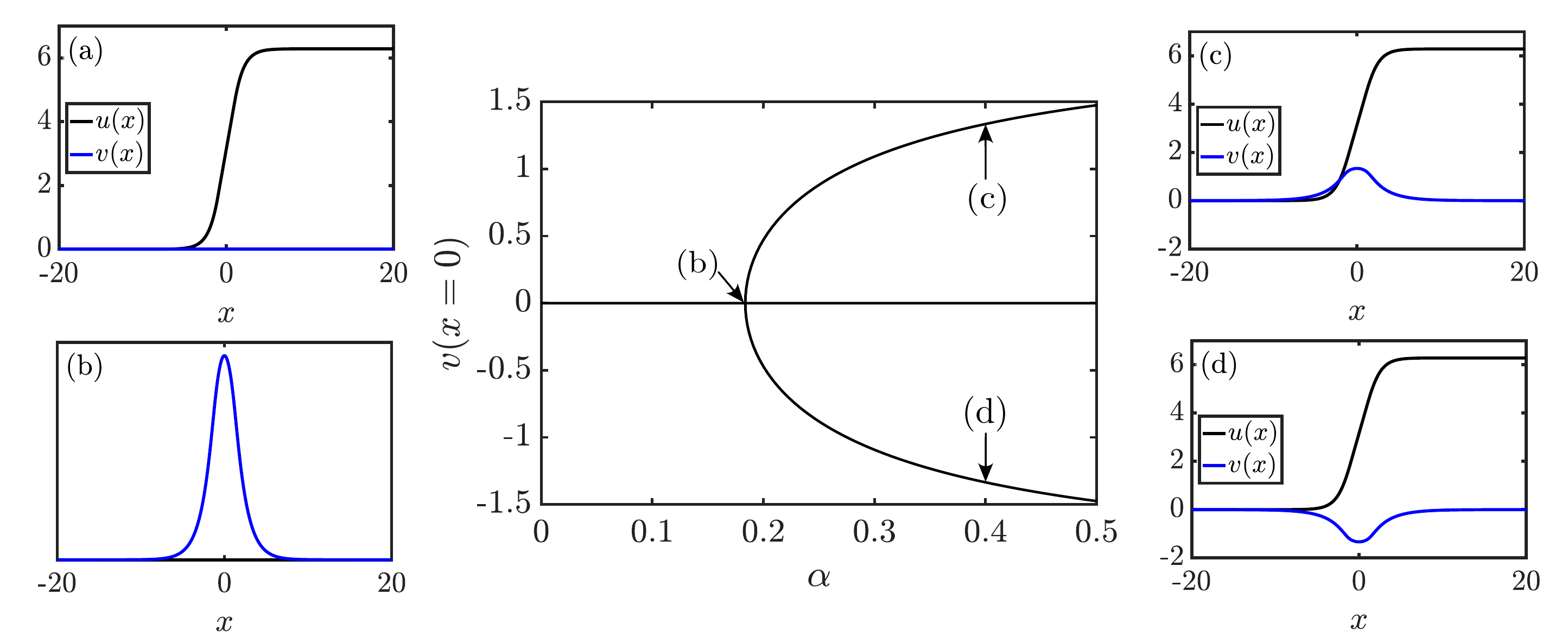}
    	  	\caption{A plot of the bifurcation branches and evolution of $v(x)$ solution in the
    	        system~(\ref{numericsystem}) with $d=1=\Delta$ and
    	      varying $\alpha\in (0,1/2)$. Panel (a) corresponds to the solution with $v(x)=0$ which exists for all $\alpha\in (0,1/2)$. The bifurcation
    	     locus is at $\alpha\approx 0.18$. Panel~(b) shows the $u$ (black) and $v$ (blue) eigenfunctions with the eigenvalue~0 respectively. Panel~(c) displays the
    	   $u$ and $v$ components of the solution on the positive
    	   bifurcation branch at $\alpha=0.4$. Panel~(d) displays the
    	   $u$ and $v$ components of the solution on the negative
    	   bifurcation branch at $\alpha=0.4$.}
    		\label{q=1pitchfork}
    
    \end{figure}

\subsection[]{The bifurcation when $\bm{d=1}$}

Consider $d=1.$ Then, in the limit $\delta \to0,$ it follows from~\cite{Derks2012} that for any $\Delta>0$,
the system~(\ref{numericsystem}) has stationary front solutions
$(u(x),u_x(x),0,0)$ with $u(x)$ given by~\eqref{q1front}.  When $0<\delta \ll 1,$ AUTO07p
shows that there are nearby stationary solutions
of~(\ref{numericsystem}) that satisfy the boundary conditions
$u(x=-50)=0$ and $u(x=50)=2\pi$.  Considering $\Delta$ fixed and
varying $\alpha\in (0,1/2)$, one can find a pitchfork
bifurcation point at some $\alpha=\alpha^*$ whereby the $(v,q)$
component can become non-zero. For example, fixing $\Delta=1$ we find
a pitchfork bifurcation at $\alpha^* \approx 0.18$. Figure
\ref{q=1pitchfork} shows this bifurcation for $\Delta =1$ and the new
emerging branches where $v(x)$ is non-zero in the region
$\alpha \in (0,1/2)$. Figure \ref{q=1pitchfork}(a-b)
show the~$u$ and~$v$ components of the solution and the eigenfunctions
for the zero eigenvalue, respectively, at the bifurcation point
$\alpha^*\approx 0.18$. In particular, we see that the eigenfunction
for the $u$ component is zero whereas for the $v$ component it is a
localised function. Fixing $\alpha = 0.4>\alpha^*$, the~$u$
and~$v$ components of the solution on the positive and negative branches of the
system~(\ref{numericsystem}), with $d=1$ and $\Delta=1$, are plotted in
Figures~\ref{q=1pitchfork}(c) and \ref{q=1pitchfork}(d) respectively. Here we observe the emergence of a
localised $v$ component that steadily grows as we move away from the
bifurcation point.

In Figure \ref{q=1pitchforklocus} we trace out the locus of the
pitchfork bifurcation point $\alpha^*(\Delta)$ in the
$(\alpha,\Delta)$ plane. For parameters $(\alpha,\Delta)$ chosen on
the right side of the bifurcation diagram there exists
non-zero $v(x)$ solutions. Whilst on the left only solutions with $v=0$
exist. This figure shows that the largest value of $\alpha$ that the pitchfork
bifurcation can occur is $\alpha^* \approx 0.185$. Hence, when $d=1$,
the bifurcation occurs when the coupling between $u(x)$ and $v(x)$
components is small.

\begin{figure}
	\centering
	\includegraphics[scale=0.4]{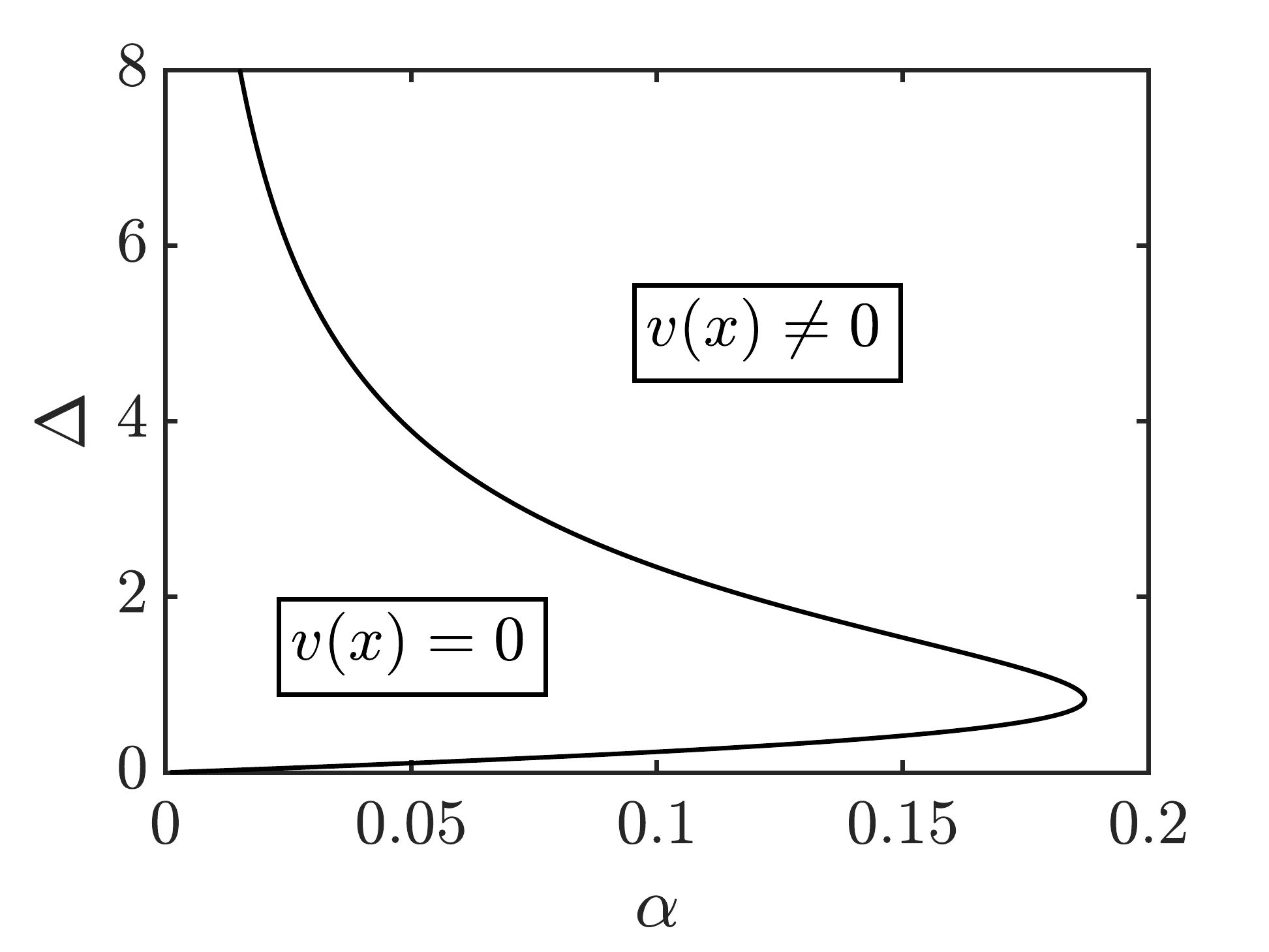}
	\caption{The locus of the pitchfork bifurcation in the $(\alpha,\Delta)$ plane when $d=1$. The $v(x)=0$ solution exists for all parameter choices but for choices on the right of the curve $v(x)\ne0$ solutions also exist.}
	\label{q=1pitchforklocus}
\end{figure}

\begin{figure}
	\captionsetup[subfigure]{labelformat=empty}
	\centering
	\subfloat[]{\includegraphics[scale=0.4]{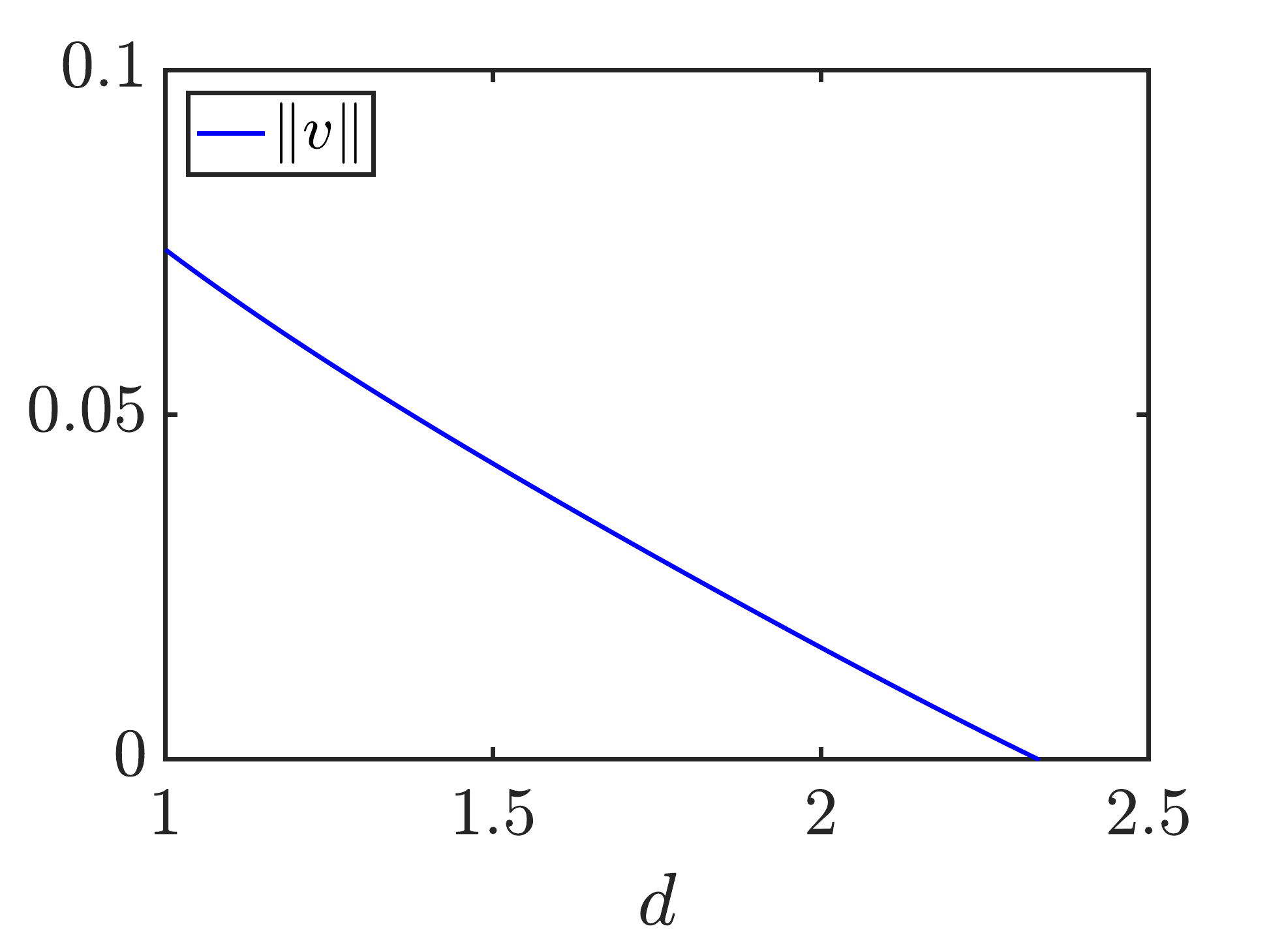}} 
	\subfloat[]{\includegraphics[scale=0.4]{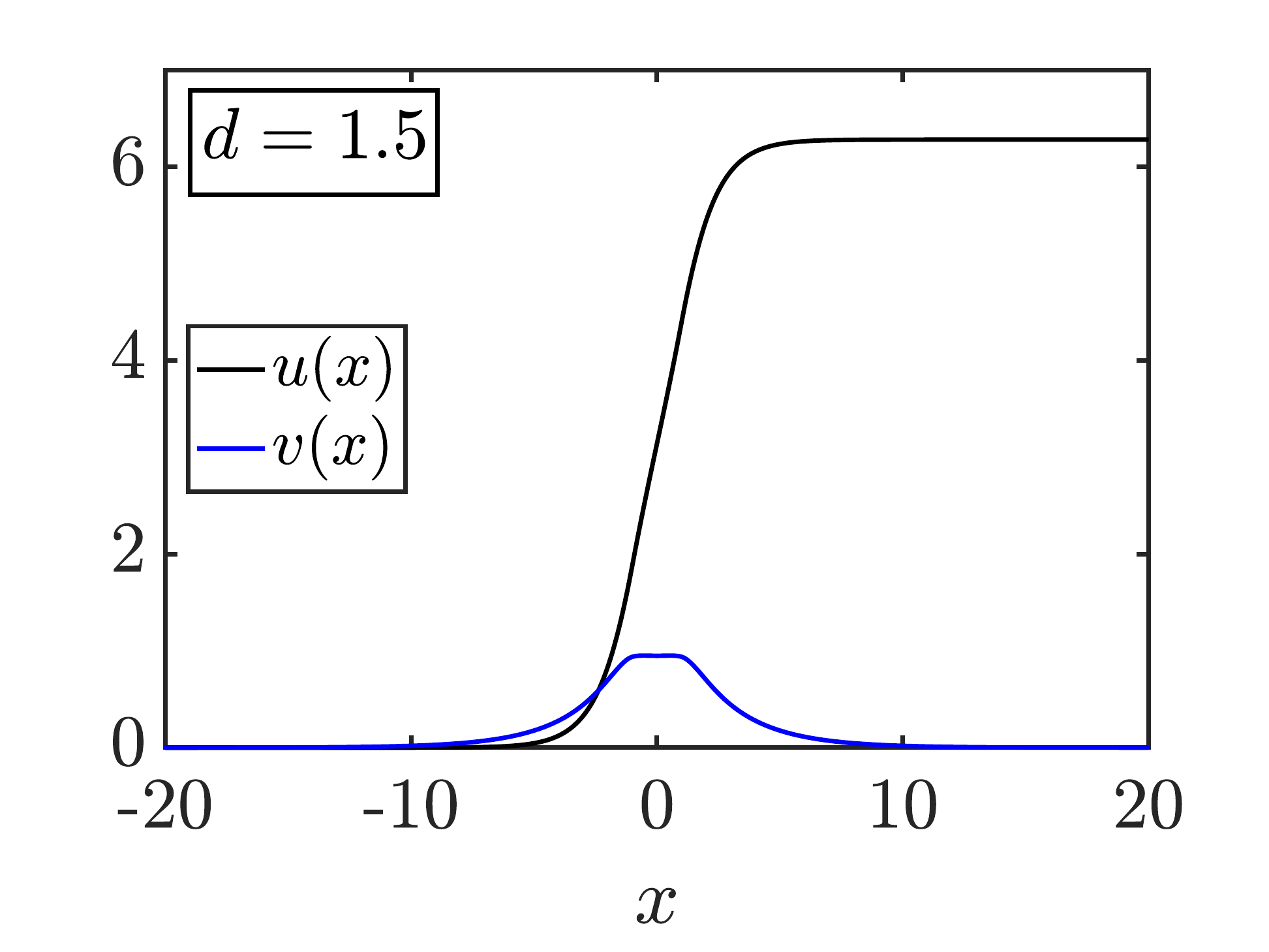}}\vspace{-14pt}\\
	\subfloat[]{\includegraphics[scale=0.4]{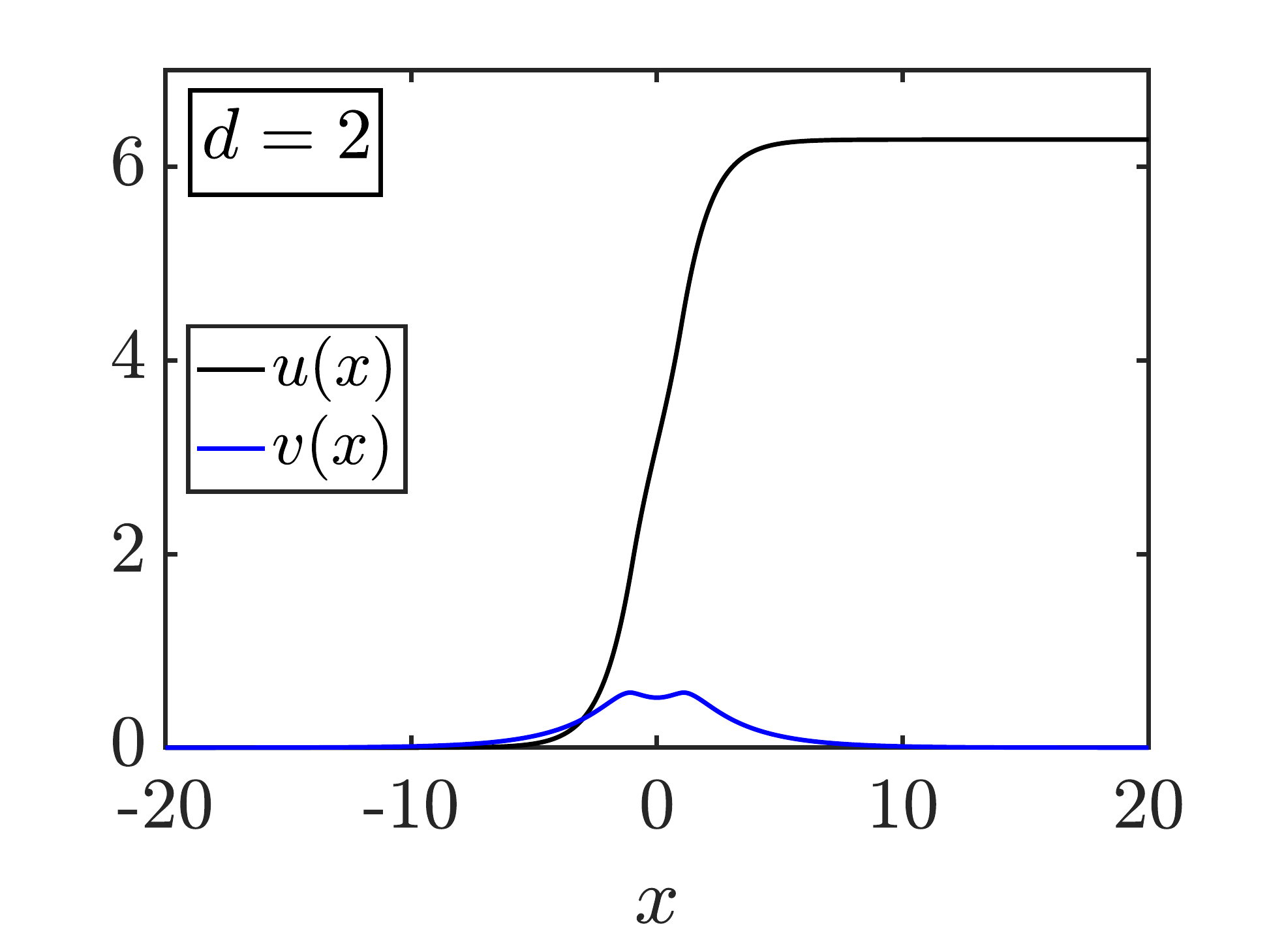}} 
	\subfloat[]{\includegraphics[scale=0.4]{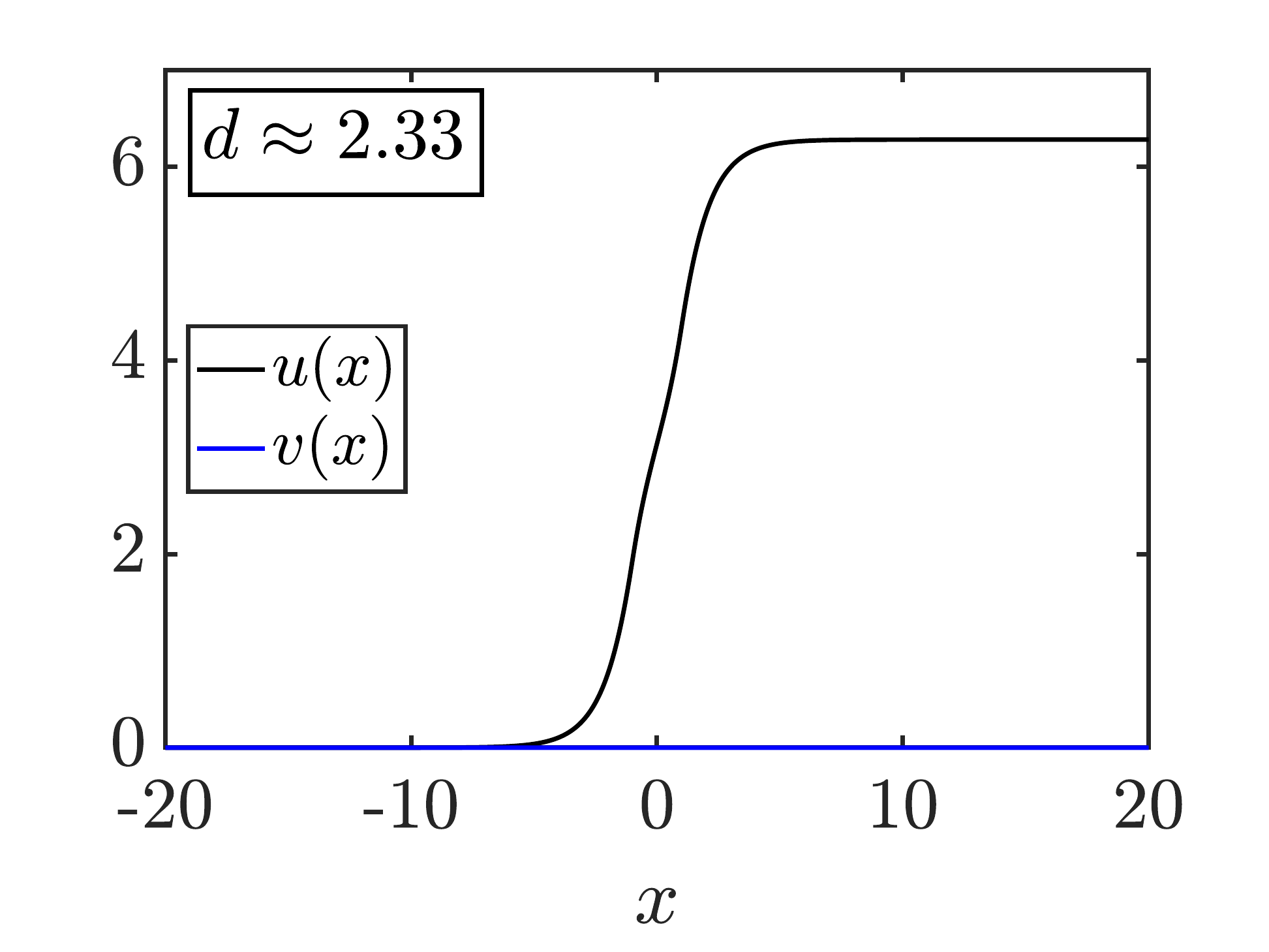}}\vspace{-14pt}\\
		\caption{The top left panel shows the evolution of the $L^2$
		norm of non-zero $v(x)$ component for $d \in [1,2.33]$ where
		$\alpha=0.4$ and $\Delta=1.$ The other panels show snapshots
		$u$ and $v$ components of the solution,
		for different values of~$d$, of the
		system~(\ref{numericsystem}). This illustrates that the~$v$
		component shrinks as~$d$ increases.}
	\label{frontdecay}
\end{figure} 

\subsection[]{The bifurcation for any $\bm{d >0}$}
We now show the pitchfork bifurcation occurs for any $d>0$. Consider
the solution with $v(x)\ne 0$ in~Figure \ref{q=1pitchfork}(c) with
fixed $\alpha=0.4$, $\Delta = 1$, and $d=1$. Increasing the parameter
$d$ results in the decay of the $v(x)$ component of the solution; see
Figure~\ref{frontdecay}. Notice that when $d\ne1$ the $v(x)$ solution
does not always have a bell shape; Figure \ref{frontdecay} shows the
$v(x)$ component developing two maximum points as~$d$
increases. Eventually, at $d\approx 2.33$ the $v(x)$ component
vanishes. This implies that, when $\Delta=1$, the bifurcation point
$\alpha^* \approx 0.185$ observed for $d=1$ increases to
$\alpha^*=0.4$ as $d$ increases to $d\approx
2.33$. Figure~\ref{qne1alphabif}(a) shows this behaviour happens for
all $\Delta$. It shows the bifurcation locus in the
$(\alpha,\Delta)$ plane for $d=2.33, d=1$ and $d=0.5.$ For parameter
choices to the left of the bifurcation loci, only solutions with $v(x)=0$ exist
whilst for parameters selected on the right of the curves there are
also solutions with a non-zero $v$ component. As $d$ increases, the
locus moves rightwards and the solution with $v(x)=0$ becomes
the dominant one since there is less parameter choice for the solution
with $v(x)\ne 0$ to exist. On the other hand, as $d$ decreases the
curve moves leftwards and more solutions with $v(x)\ne 0$ exist. In
particular, for $d<1$, the loci asymptote to $\alpha=0$ for $\Delta \to
\infty$ and are also close to $\alpha=0$ for a large range of $\Delta$ values. In
Figure~\ref{qne1alphabif}(b) we plot the bifurcation branches for the
$v(x)$ component when $d \approx 2.33$ and $\Delta=1$.

\begin{figure}
	\captionsetup[subfigure]{labelformat=empty}
	\centering
	\subfloat[]{\includegraphics[scale=0.4]{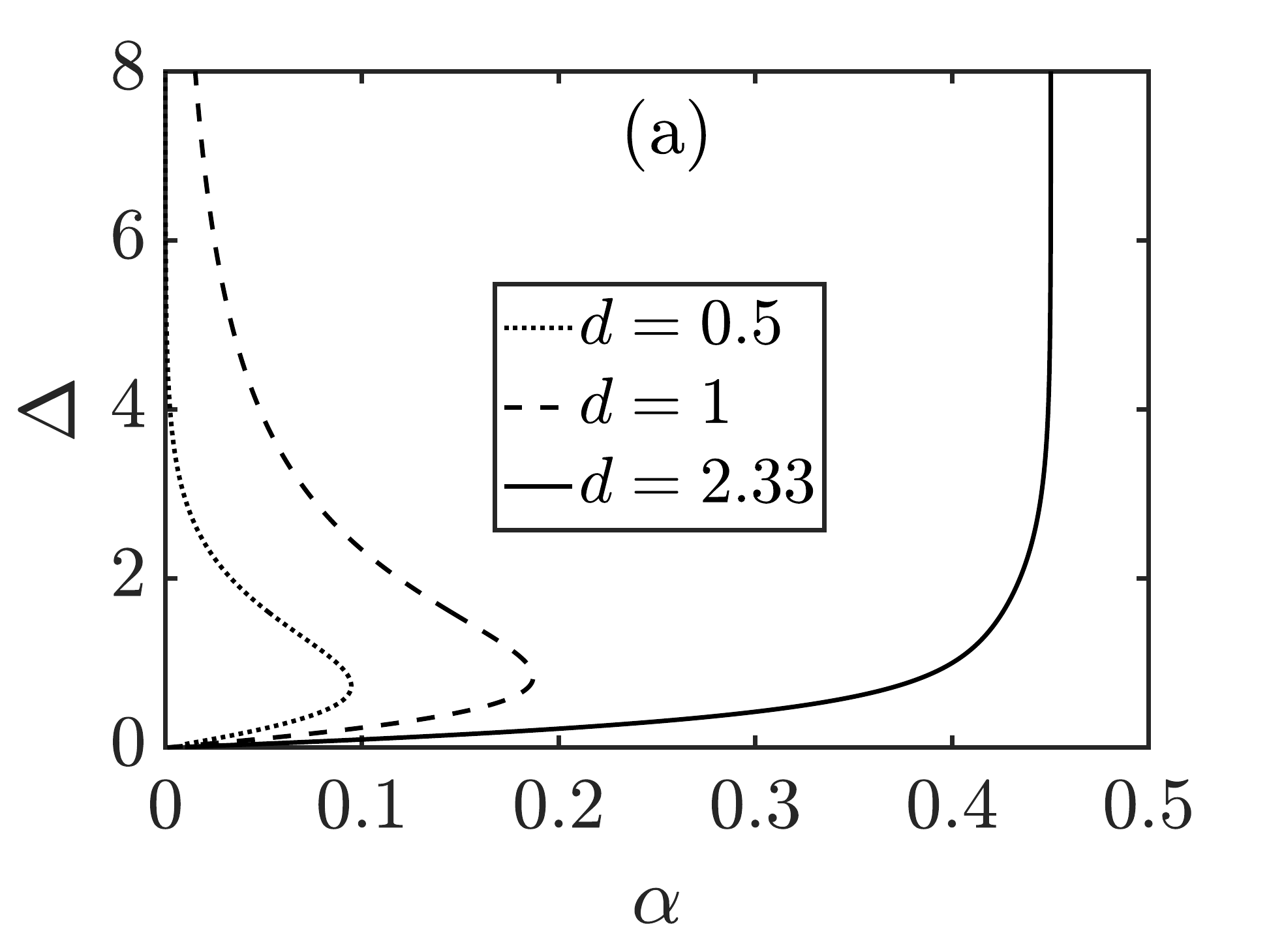}} 
	\subfloat[]{\includegraphics[scale=0.4]{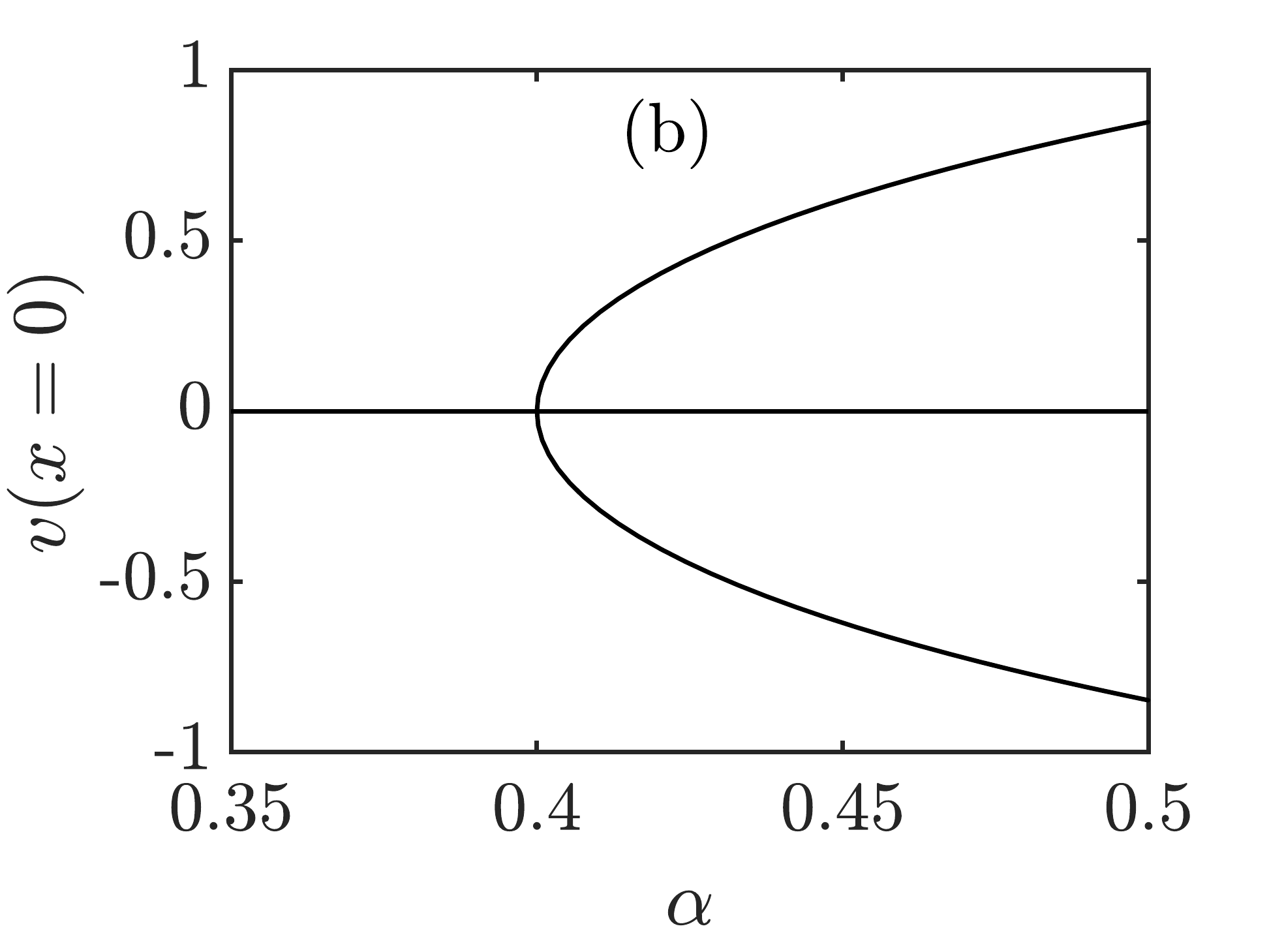}}\vspace{-14pt}\\
	\caption{Panel~(a) corresponds to the bifurcation locus in the
          $(\alpha,\Delta)$ plane when $d=0.5$, $1$, and $2.33$. To the right of each
          curve $v(x) \ne 0$ solutions exist.
          Panel~(b) shows the bifurcation branches and the evolution
          of the $v(x)$ solution when $\Delta=1$ and $d=2.33$.}
	\label{qne1alphabif}
\end{figure} 

Finally we fix $\alpha$ and consider the bifurcation locus in the
$(d,\Delta)$ plane. In the main panel of Figure~\ref{qvsdelta} we
trace the pitchfork bifurcation locus in the $(d,\Delta)$ plane for
fixed $\alpha=0.4$. On the curve and in the area to the right
$v(x)=0$.  Meanwhile on the left $v(x) \ne 0$ solutions also
exist. Figures~\ref{qvsdelta}(a-c) give details about the solution
($u(x)$ and $v(x)$ components and $(u,p)$ phase plane, where $p=u_x$,
and eigenfunction for the eigenvalue zero at selected points on the
curve.  Figure~\ref{qvsdelta}(a) corresponds to a large $\Delta$ value
at the top of the curve ($\Delta=8$). Here we observe that the $u$
front has a plateau around $\pi$ at $x=0$. As one passes to the points
(b) and (c) on the curve in Figure~\ref{qvsdelta} (hence $\Delta$
decreases and $d$ increases), we see that this plateau disappears and
$u$ tends to the unperturbed sine-Gordon front as $d$ becomes large.
%
\begin{figure}
	\captionsetup[subfigure]{labelformat=empty}
	\centering
	\subfloat[]{\includegraphics[scale=0.45]{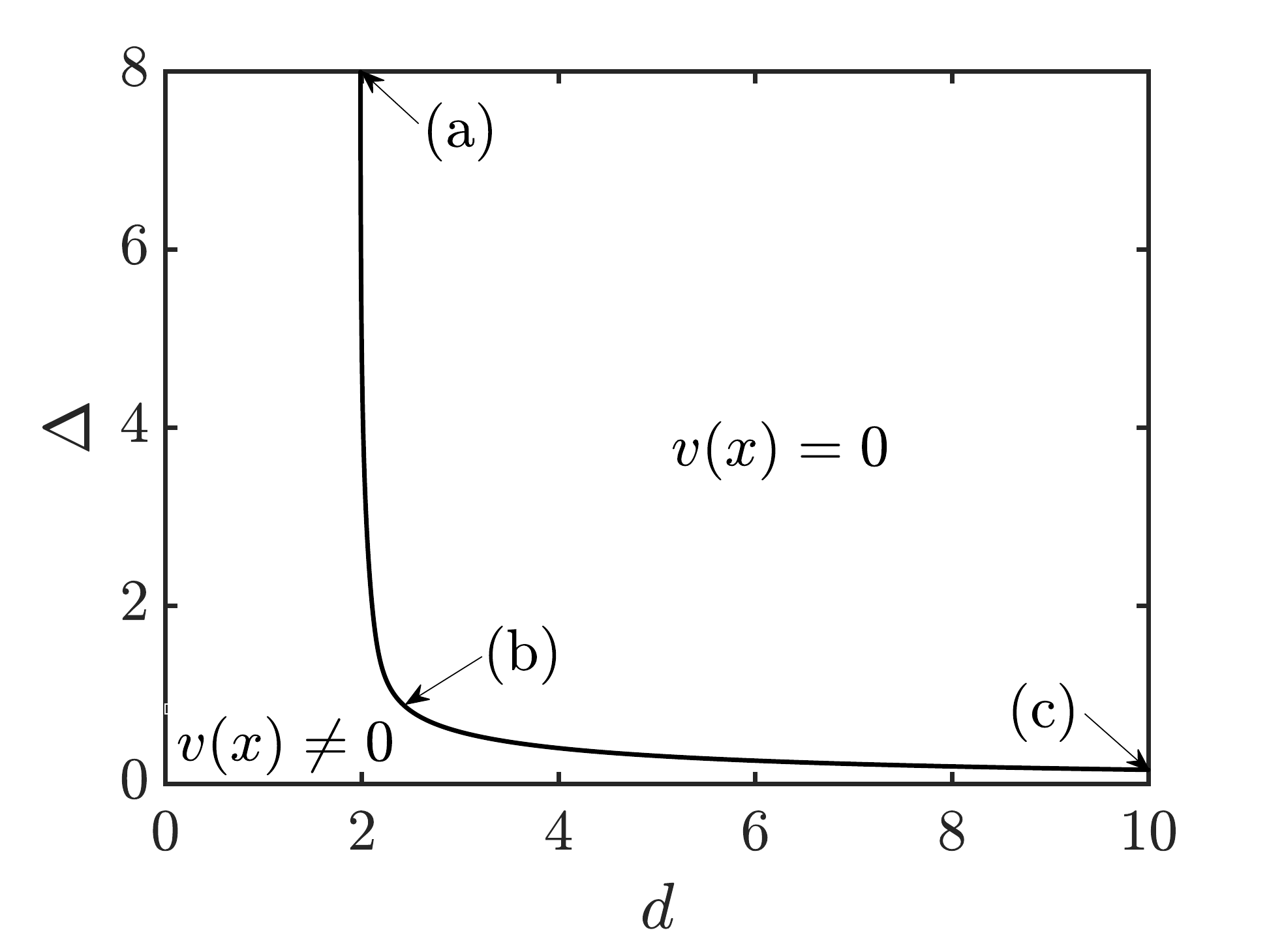}} \vspace{-30pt}\\
	\subfloat[]{\includegraphics[scale=0.26]{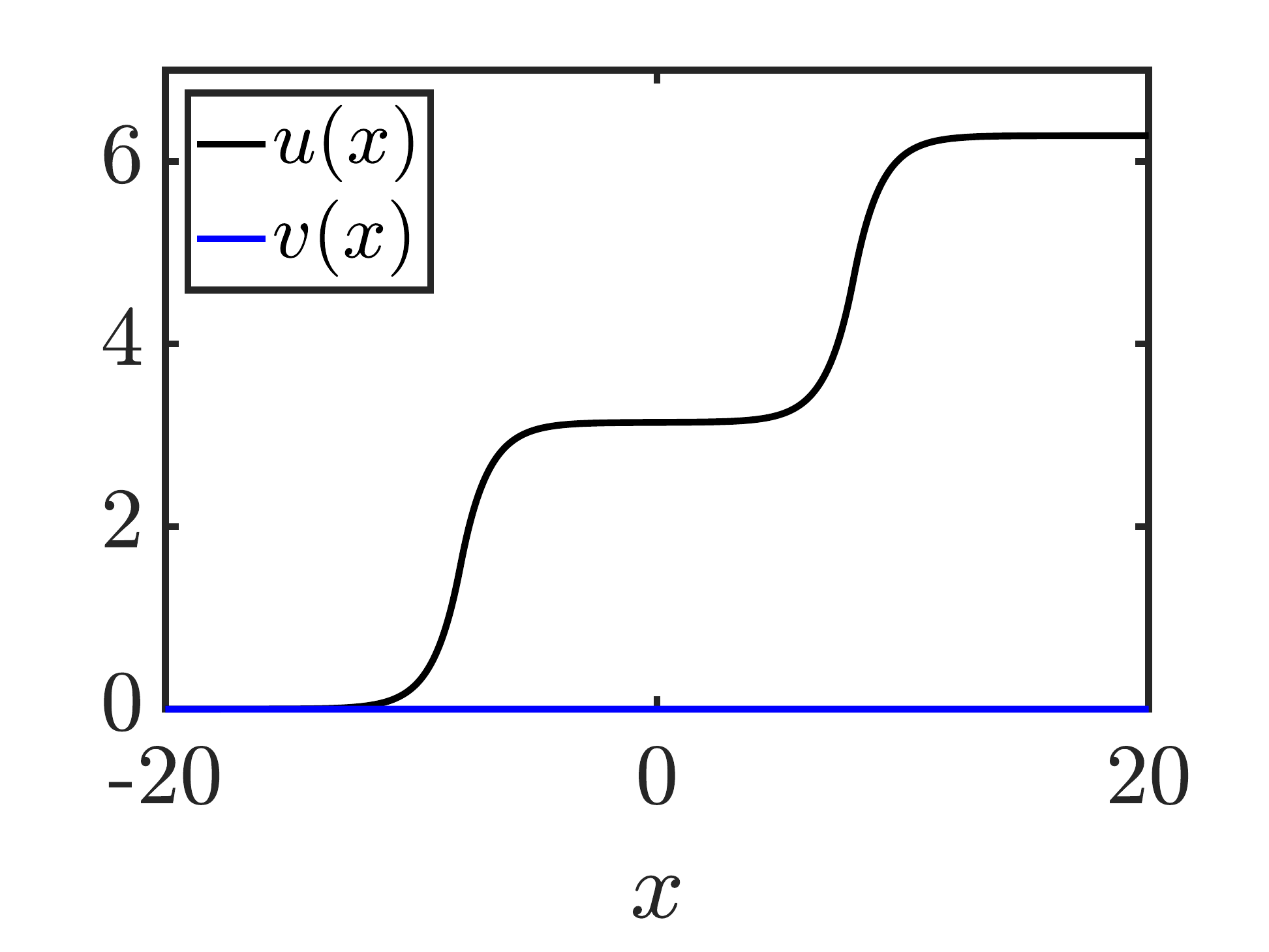}} 
	\subfloat[(a)]{\includegraphics[scale=0.26]{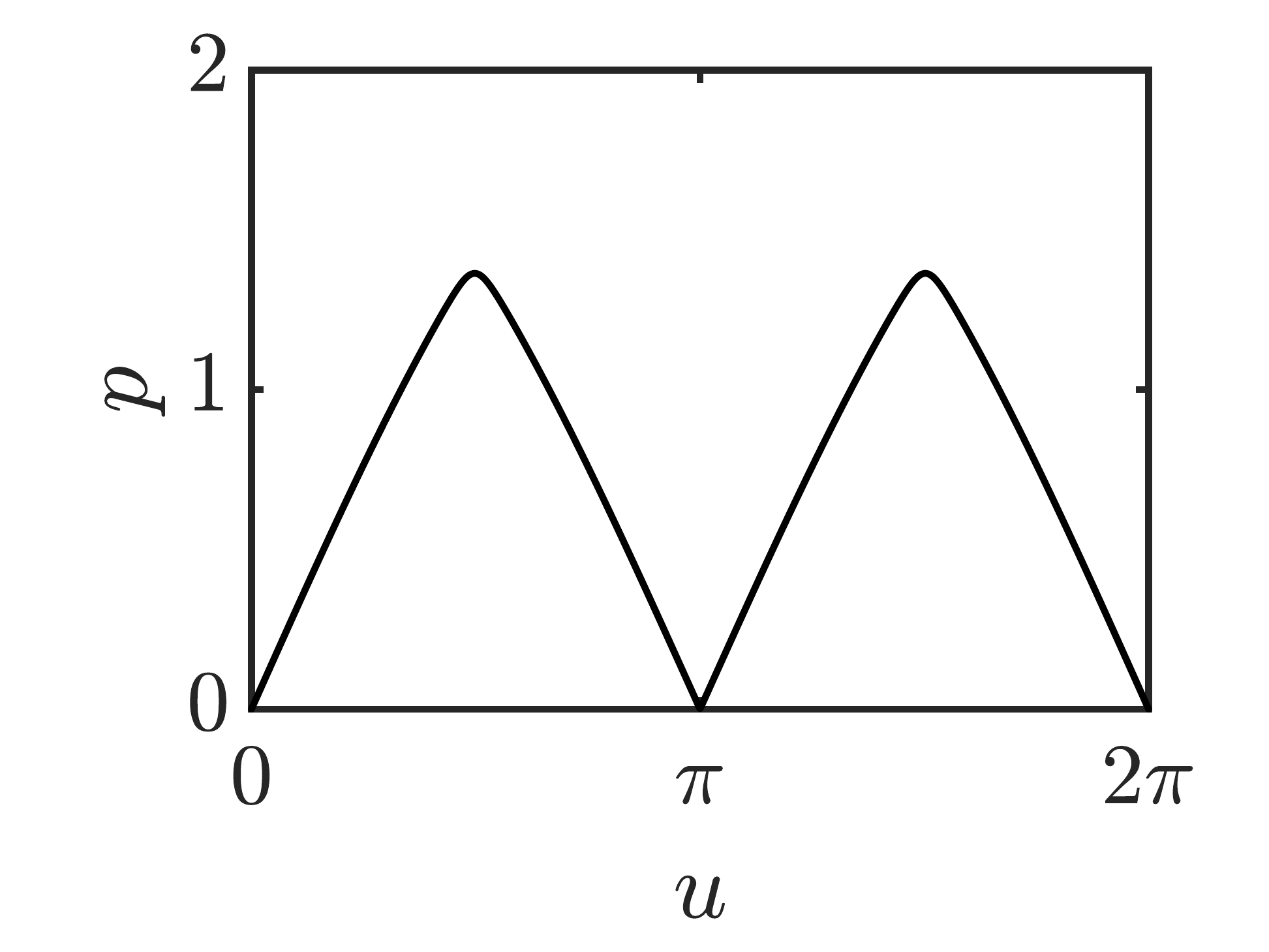}}
	\subfloat[]{\includegraphics[scale=0.26]{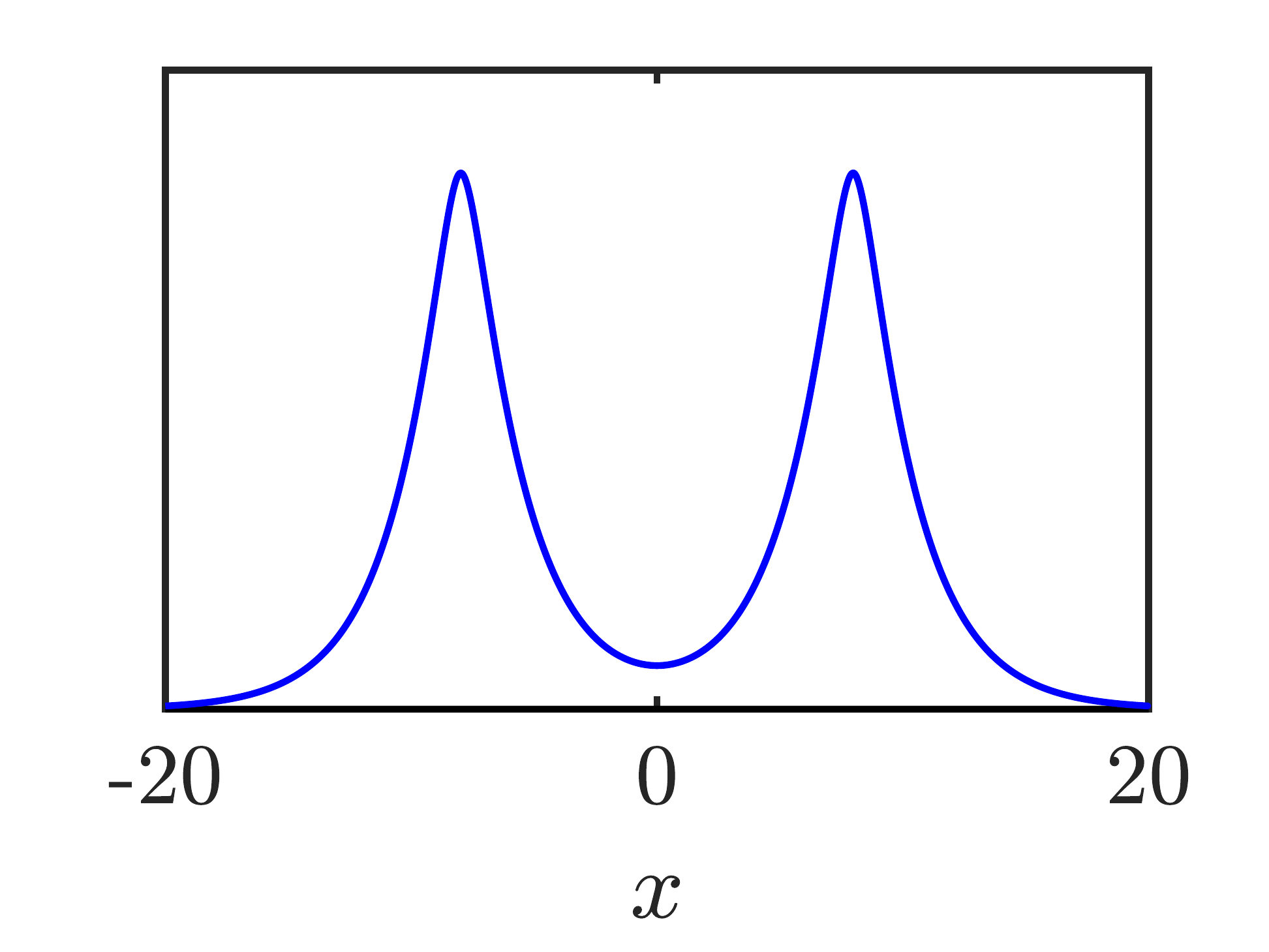}}\vspace{-12pt}\\
	\subfloat[]{\includegraphics[scale=0.26]{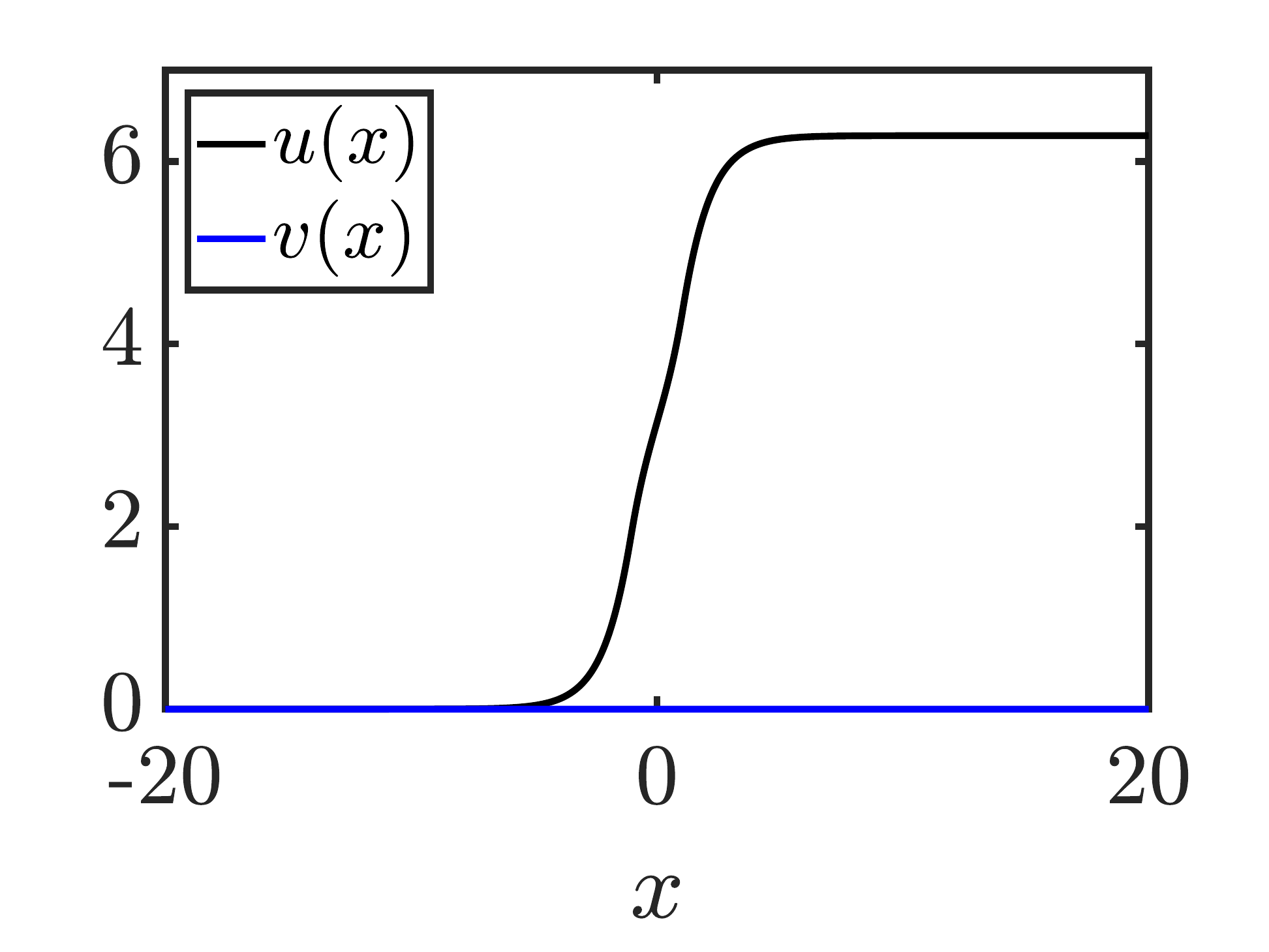}} 
	\subfloat[(b)]{\includegraphics[scale=0.26]{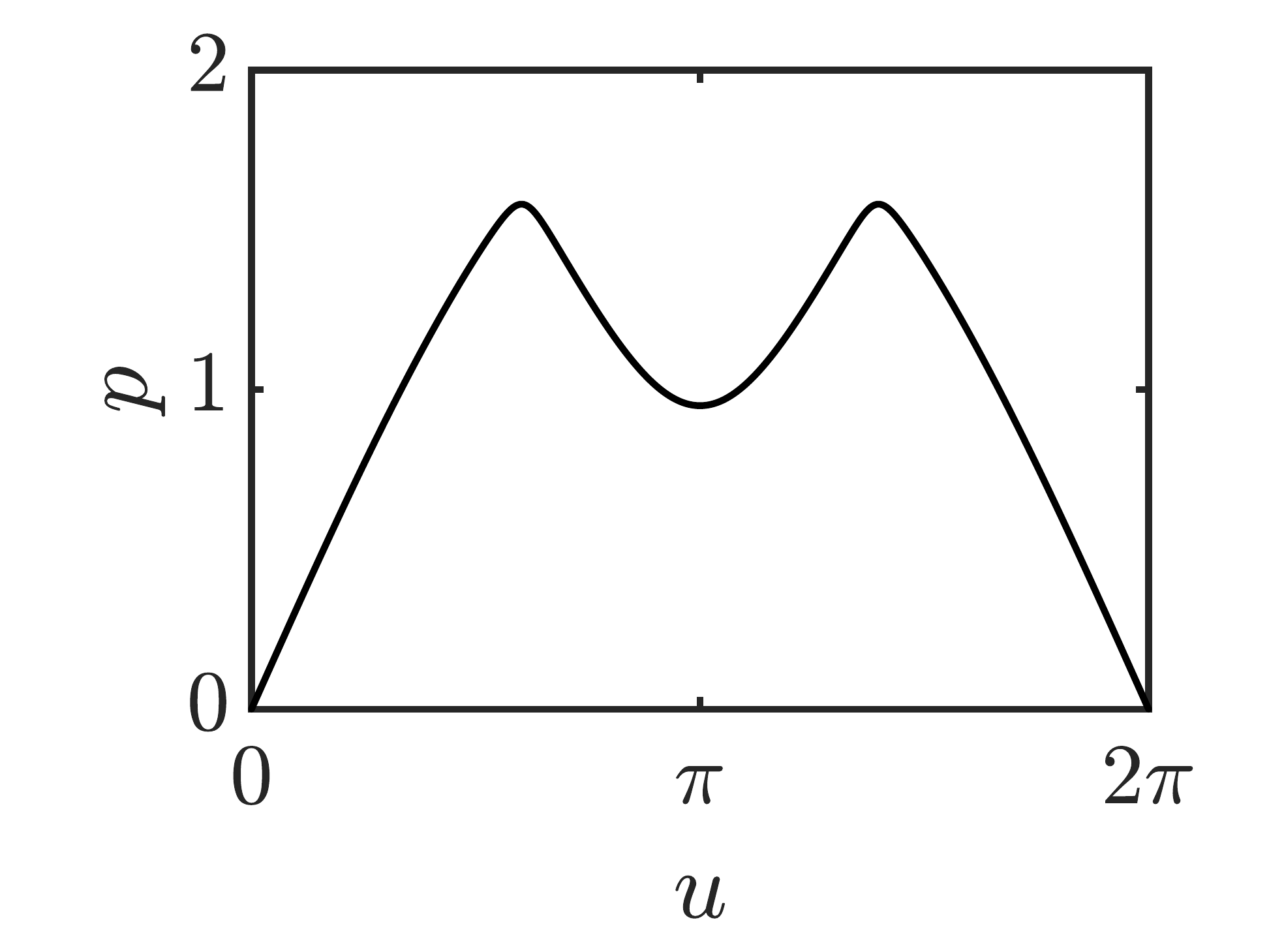}}
	\subfloat[]{\includegraphics[scale=0.26]{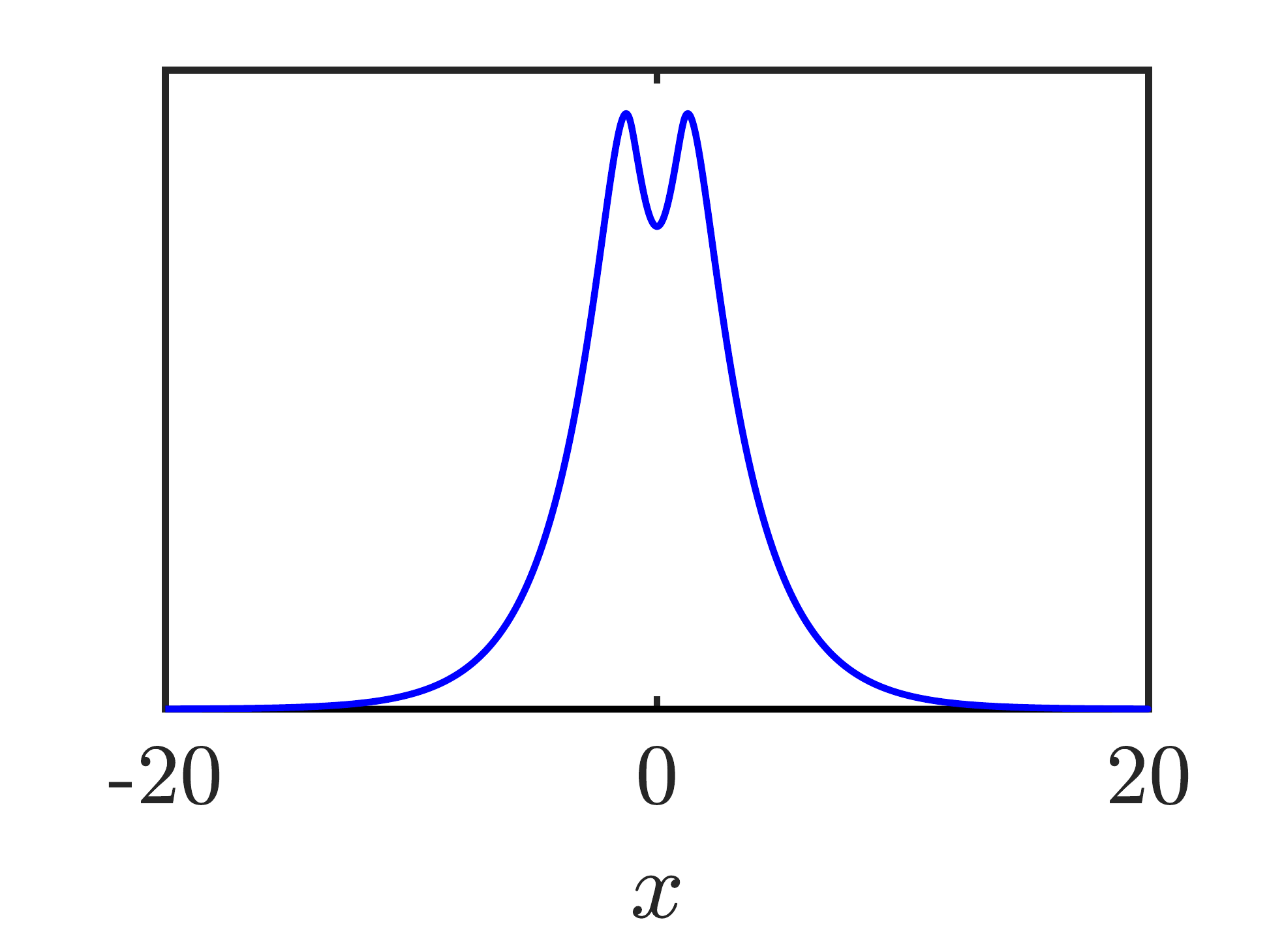}}\vspace{-12pt} \\ 
	\subfloat[]{\includegraphics[scale=0.26]{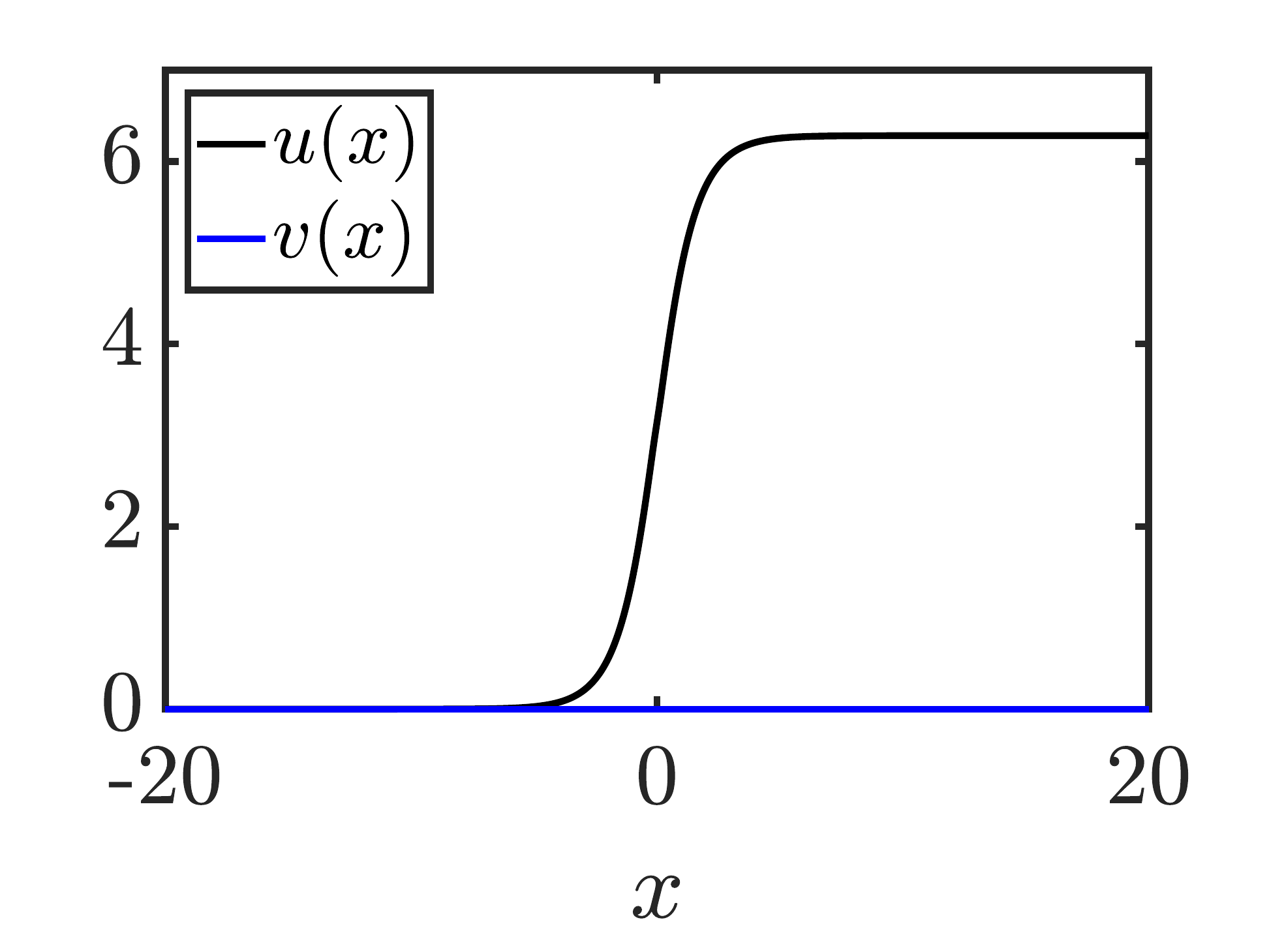}} 
	\subfloat[(c)]{\includegraphics[scale=0.26]{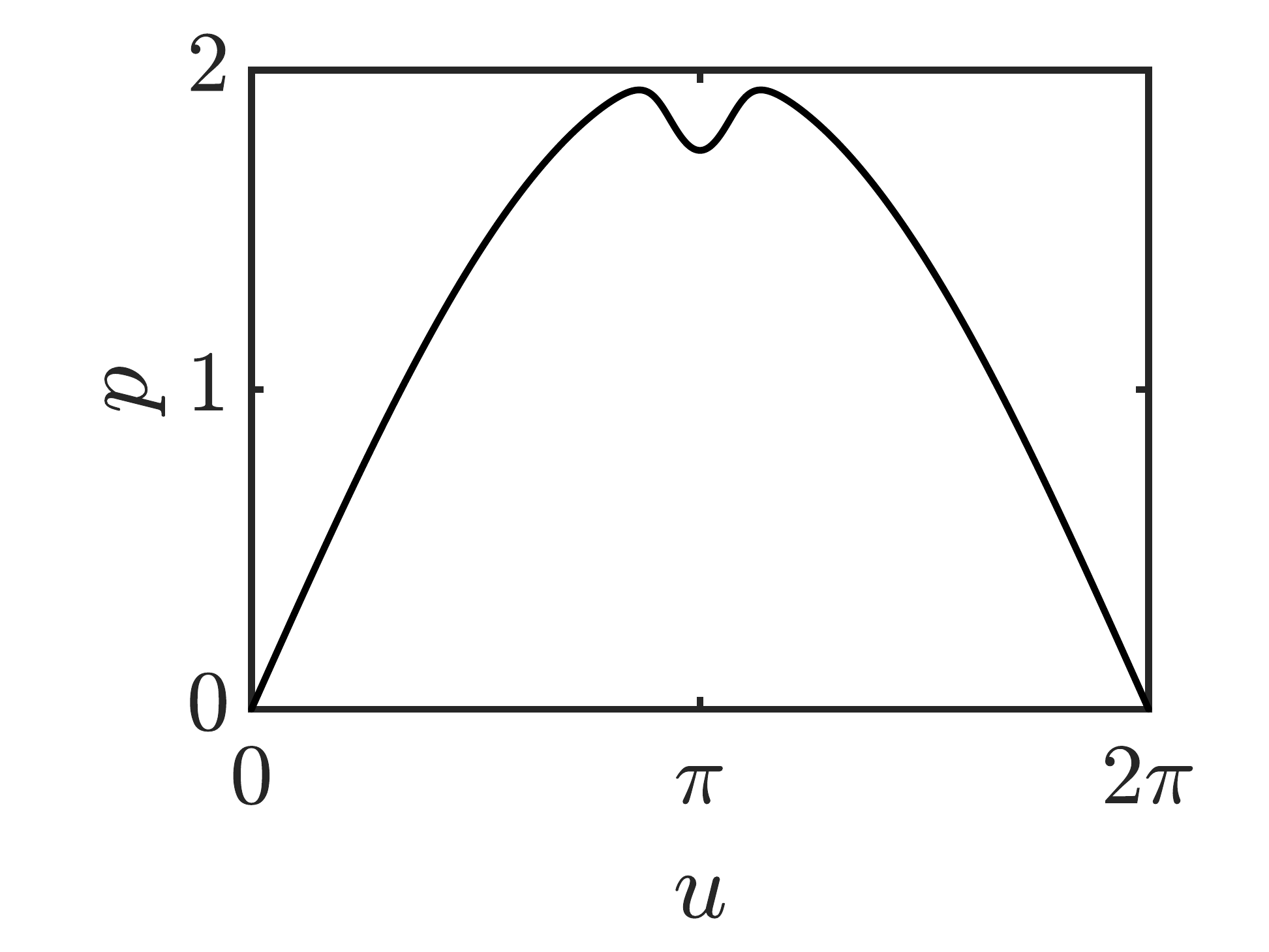}}
	\subfloat[]{\includegraphics[scale=0.26]{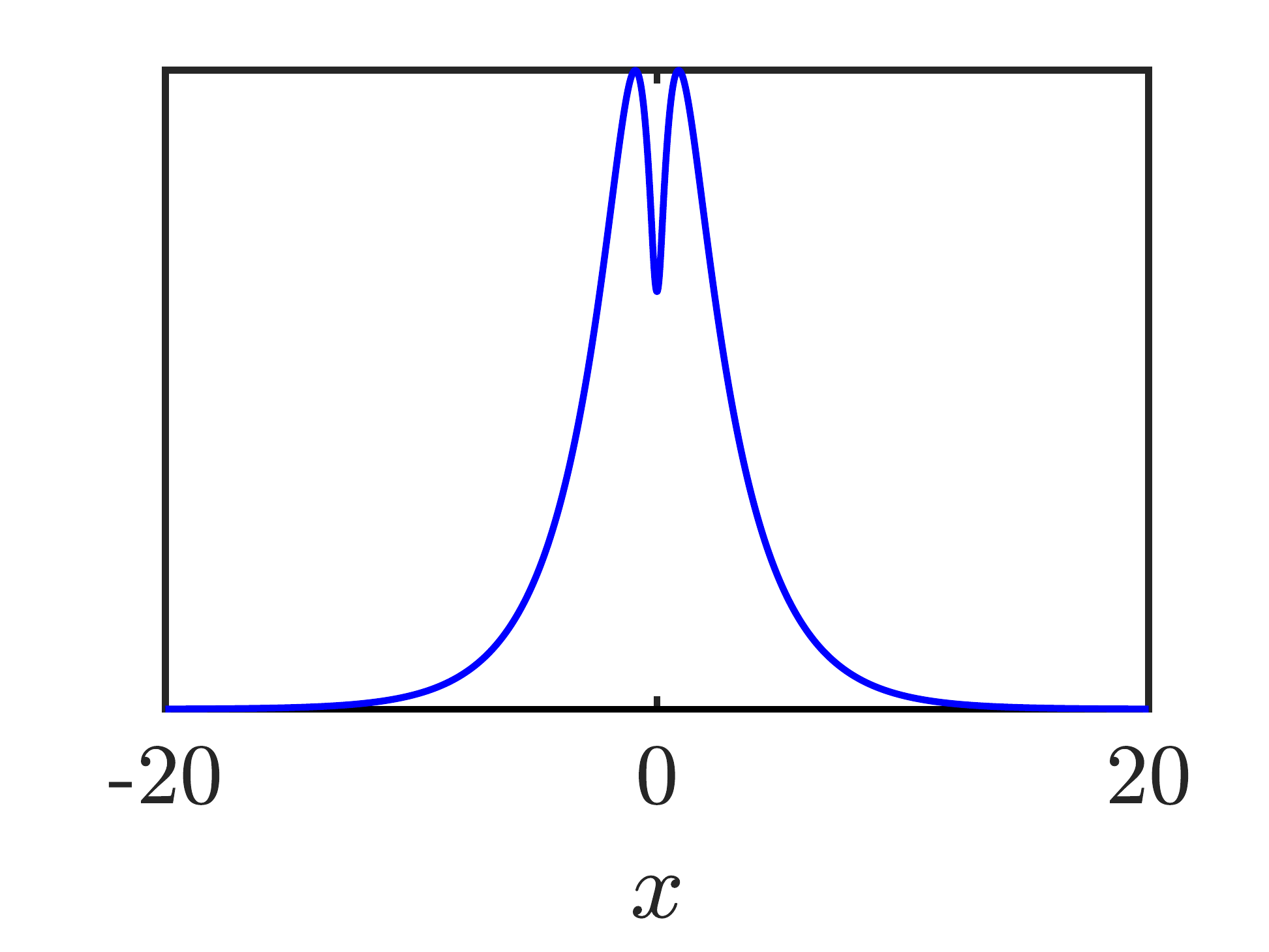}}\\
	\caption{The top panel shows the bifurcation locus for
          $\alpha=0.4$ in the $(d, \Delta)$ plane. Panels (a), (b),
          and (c) give details at the three bifurcation
          points labelled in the top panel. The left column represents the physical
          space whilst the middle is a plot of the $(u,p)$ phase
          space, where $p=u_x$. The
          final column is a plot of the $v$ component of the
          eigenfunction at the eigenvalue zero.}
	\label{qdscas}
	\label{qvsdelta}
      \end{figure}
      
      In Figure \ref{qdeltabifplane}(a), bifurcation loci in the
      $(d,\Delta)$ parameter plane have been plotted for
      $\alpha = 0.01$, $0.1$, $0.25$, $0.4$, and $0.49$. To the right
      of each curve $v(x)=0$ is the only solution whilst to the left
      non-zero solutions for $v(x)$ exist. As $\alpha \to 1/2$, the
      bifurcation locus translates rightwards and more $v(x)\ne 0$
      solutions exist. On the other hand as $\alpha\to 0$ the locus moves leftwards and becomes non-monotonic. However
      for fixed values of $\Delta$, the curves have the property that
      $d\to 0$ as $\alpha \to 0$. Also, for $\alpha$ fixed, the
      bifurcation curves asymptote to $\Delta =0$ for $d\to\infty$ and
      to some $d(\alpha)>1$ for $\Delta\to\infty$, with
      $d(\alpha)\to 1$ for $\alpha\to0$.
      This is illustrated in Figure~\ref{qdeltabifplane}(b) where the bifurcation locus in
      the $(\alpha,d)$ parameter plane is shown for fixed $\Delta=8$ and
      $0.01\leqslant \alpha \leqslant0.49$.

\begin{figure}
	\captionsetup[subfigure]{labelformat=empty}
	\centering
	\subfloat[]{\includegraphics[scale=0.4]{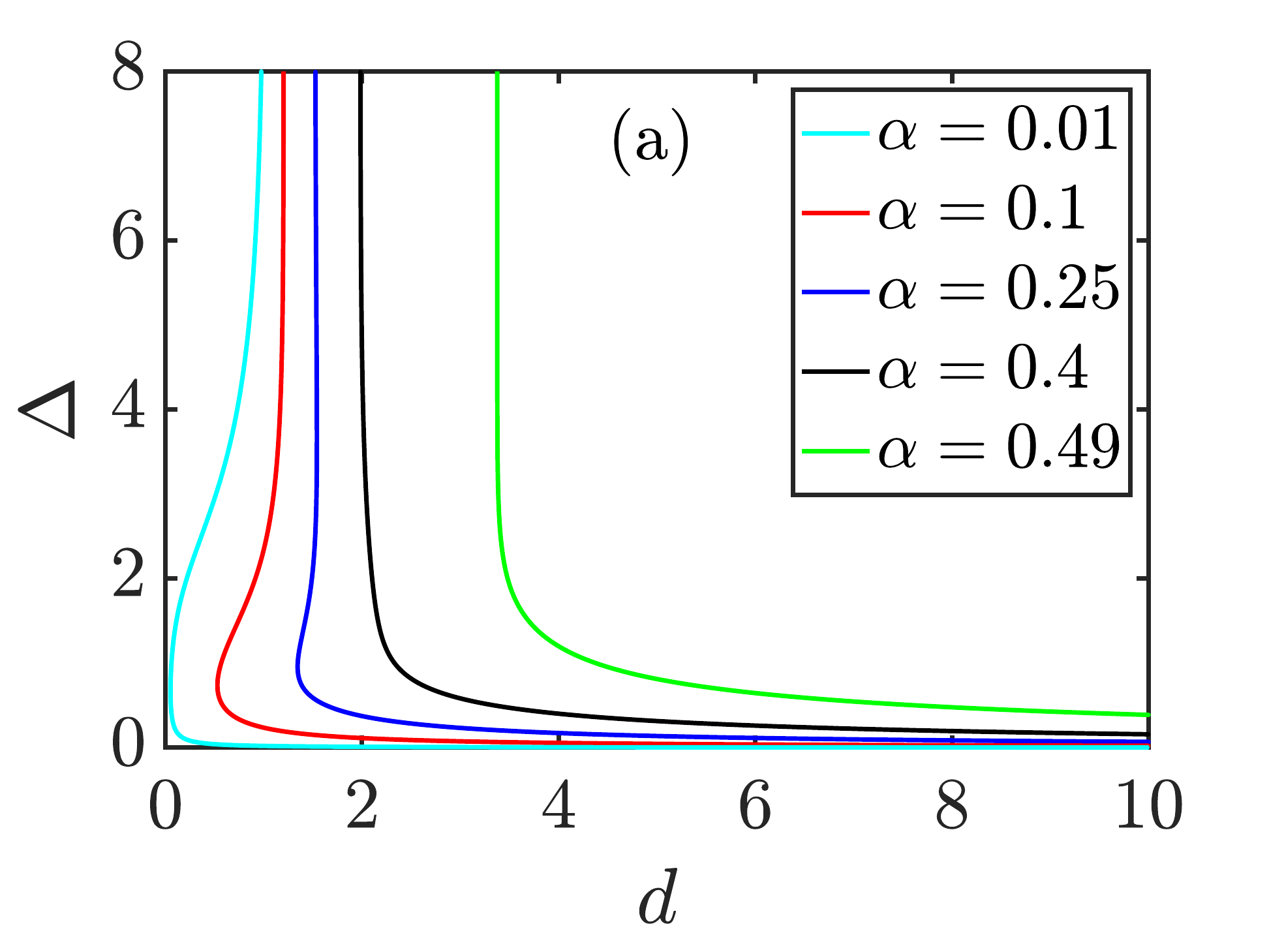}}
	\subfloat[]{\includegraphics[scale=0.4]{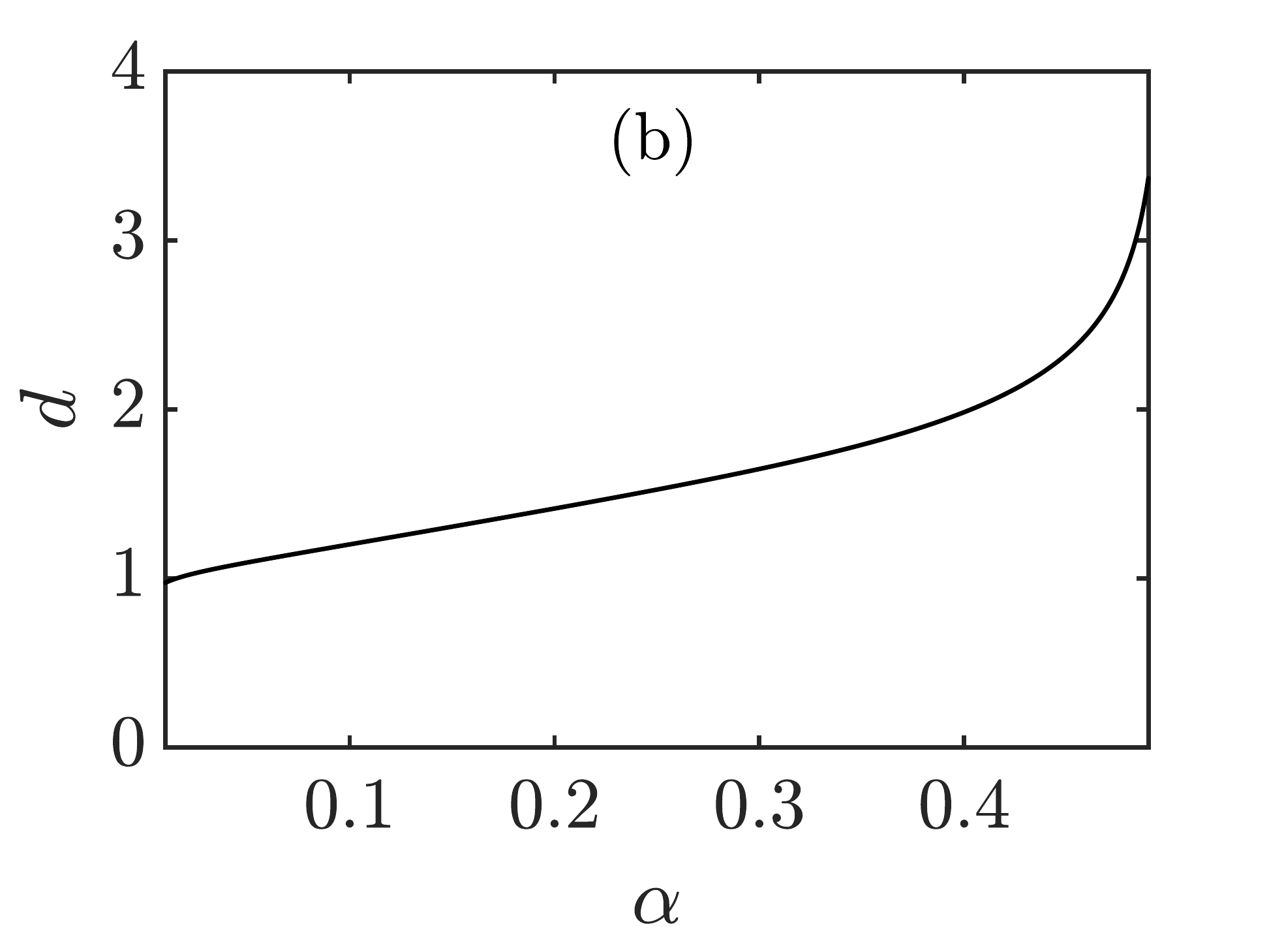}} 
	\vspace{-12pt}
	\caption{(a) shows the bifurcation locus in the
          $(d,\Delta)$ plane for several fixed values of $\alpha$. (b) gives the bifurcation locus in the $(\alpha,d)$
          plane for fixed $\Delta=8$ and $\alpha\in[0.01,0.49]$.}
	\label{qdeltabifplane}
\end{figure}

\end{section}

\newcommand{\gd}[1]{{#1}} 

\begin{section}{Bifurcation manifold analysis} \label{diagramanalysis}

  Upon fixing $0<\delta \ll 1$, the numerical investigation in the
  previous section on the BVP (\ref{alphasys}-\ref{BCS}) suggests that
  there is a single bifurcation manifold in the three parameter space
  $(\alpha, d, \Delta)$ where a pitchfork bifurcation occurs. On this
  manifold the solution state $(u(x),v(x))=(u(x),0)$ bifurcates to one
  where $v(x) \ne 0$.
  
  Using the piecewise constant inhomogeneity $\rho_0(x;\Delta)$ given
  in (\ref{step}) we have the explicit expression~(\ref{q1front}) for
  solutions $(u(x),v(x))=(u_0(x;d,\Delta),0)$ to the
  BVP~(\ref{alphasys}-\ref{BCS}) in the $d=1$ case.  Furthermore, we
  can derive approximations for the solutions $u_0(x;d,\Delta)$ in the
  $d\gg1$ and $\Delta \gg 1$ limits. So in this section we consider
  the piecewise constant inhomogeneity $\rho_0(x;\Delta)$ as an
  approximation of $\rho(x;\Delta,\delta)$. We look for the critical
  parameters in the three parameter space $(\alpha,d,\Delta)$ of the
  BVP
 
	\begin{gather}
	\begin{split}
	u_{xx} &= (1-d\rho_0(x;\Delta))\sin u \cos v, 
	\\
	v_{xx} &= (1-d\rho_0(x;\Delta)) \sin v \cos u -\alpha \sin 2v,	\label{alphasys2}
	\end{split}
	\\
	\lim_{x\to-\infty}(u(x),v(x))=(0,0) \quad \text {and} \quad \lim_{x\to+\infty}(u(x),v(x))=(2\pi,0), \label{BCS2}
	\end{gather}
at which the solution state $(u(x),v(x)) = (u_0(x;d,\Delta),0)$ can bifurcate to
one where $v(x)\ne0$. To be specific, we find the parameter values for
which the linearisation about the state $(u_0(x;d,\Delta),0)$ has an
eigenvalue zero. When $d=1$ we determine an implicit relation between
$\alpha$ and $\Delta$ that characterises the bifurcation locus. When
$d\gg1$ or $\Delta \gg 1$, we obtain approximations of the bifurcation
locus. We give these results in the following Theorem.
\begin{theorem} \label{bifanalysis2}
Consider the BVP~(\ref{alphasys2}-\ref{BCS2}). In the cases below, the
solution $(u(x),v(x))=(u_0(x;d,\Delta),0)$ can bifurcate to a solution with $v\neq0$. 

\underline{\bf{\textit{Case 1:}} $\bm{d=1}$} 

The bifurcation locus
  $\alpha(\Delta)$ is determined implicitly by
\begin{multline}
\frac{h}{2}-\sqrt{1-2\alpha}\bigg(\sqrt{1-2\alpha}+\frac{\sqrt{2(2-h)}}{2}\bigg) \\
=-\sqrt{2\alpha}\bigg(\sqrt{1-2\alpha}+\frac{\sqrt{2(2-h)}}{2}\bigg)
\tan\bigg(\sqrt{\frac{\alpha}{h}}\bigg(\arccos(h-1)\bigg)\bigg)
\label{alphabif}
\end{multline}	
where $0<h<2$ is determined from the one to one relation $\Delta = \arccos (h-1)/\sqrt{2h}$.

\underline{\bf{\textit{Case 2:}} $\bm{d\gg1}$}

For $0<\alpha<1/2$ the bifurcation locus is approximated by
\begin{equation} \label{d>>0}
\Delta(\alpha;d) =
\frac{1}{d}\bigg(\frac{2\alpha}{\sqrt{1-2\alpha}}\bigg)
\bigg(1+\mathcal{O}\bigg(\frac{1}{\sqrt{d}}\bigg)\bigg).
\end{equation}

\underline{\bf{\textit{Case 3:}} $\bm{\Delta \gg 1}$}
\begin{enumerate}[label=(\alph*)]
	
\item When $d>1$ the bifurcation
  locus is approximated by $\alpha(d;\Delta)=\alpha_0(d)
  +\mathcal{O}(e^{-\sqrt{d-1}\Delta})$ with $\alpha_0(d)$  the solution of 
  \begin{multline} \label{BifDeltaLarge}
   \left(\frac{\sqrt{1-2\alpha_0}}{\sqrt{d}}-2\alpha_0+ \frac{1}{d}
    \right)
    \left(\frac{d-1}{\sqrt{d}}+\sqrt{d-1-2\alpha_0}\right) \\
    +\left(\frac{1}{\sqrt{d}}+\sqrt{1-2\alpha_0}\right)
    \left(\frac{d-1}{\sqrt{d}}\sqrt{d-1-2\alpha_0}-{2\alpha_0}
        +\frac{(d-1)^2}{d}\right)=0.
\end{multline}
This implies that $0\leqslant \alpha_0(d)<\frac12\min(d-1,1)$ and
$\alpha_0(d)=\frac{d-1}2 +\mathcal{O}((d-1)^2)$ for
$d\downarrow 1$.

\item When $0<d<1$ the bifurcation locus satisfies
$\alpha(d;\Delta) = o(1)$ for $\Delta \to \infty$. 
\end{enumerate}
\end{theorem}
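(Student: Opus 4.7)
The plan is to reduce the bifurcation condition to a spectral problem for the linearisation about $(u_0(x;d,\Delta),0)$ and then solve or approximate it case by case. Linearising (\ref{alphasys2}) about $(u_0,0)$ and using that perturbations in $u$ decouple from those in $v$ at this symmetric state, the bifurcation occurs precisely when the scalar eigenvalue problem
\[
v_{xx} = \bigl[(1-d\rho_0(x;\Delta))\cos u_0(x;d,\Delta) - 2\alpha\bigr] v,
\qquad v(\pm\infty) = 0,
\]
admits a nontrivial solution. The symmetry $u_0(-x) = 2\pi - u_0(x)$ makes the coefficient even in $x$, so eigenfunctions can be taken either even or odd; the numerics in Section~\ref{numericalinvestigation} single out the even mode as the bifurcating one, and I would work with that throughout. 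In each case the task therefore reduces to matching an explicit (or asymptotic) inner solution on $|x|<\Delta$ to an explicit (or asymptotic) outer solution on $x>\Delta$ at $x=\Delta$, imposing continuity of $v$ and $v_x$, and extracting an implicit equation in $(\alpha,d,\Delta)$.

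For Case~1 ($d=1$) the inner equation collapses to $v_{xx} = -2\alpha v$ because $1-d\rho_0 \equiv 0$ on $|x|<\Delta$, so the even inner mode is $v(x)=A\cos(\sqrt{2\alpha}\,x)$. On the outer region, $u_0$ equals a translated sine-Gordon front and $\cos u_0 = 1-2\,\mathrm{sech}^2(x-x^*)$, so the outer equation is the Pöschl-Teller problem
\[
v_{xx} = \bigl[1-2\alpha - 2\,\mathrm{sech}^2(x-x^*)\bigr] v,
\]
whose decaying solution at $+\infty$ is the explicit reflectionless waveform $v(x) = B\,e^{-k(x-x^*)}\bigl(k+\tanh(x-x^*)\bigr)$ with $k=\sqrt{1-2\alpha}$. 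Writing out $v/v_x$ at $x=\Delta$ on both sides and eliminating $A,B$ produces
\[
-\sqrt{2\alpha}\,\tan(\sqrt{2\alpha}\,\Delta)
 = \frac{-k^2 - k\tanh(\Delta-x^*) + \,\mathrm{sech}^2(\Delta-x^*)}{k+\tanh(\Delta-x^*)}.
\]
Using (\ref{eq:Delta_xstar}) to rewrite $\tanh(\Delta-x^*)=\sqrt{(2-h)/2}$ and $\,\mathrm{sech}^2(\Delta-x^*)=h/2$ (a short calculation via the identity $\tanh(\ln t)=(t^2-1)/(t^2+1)$) and noting that $\sqrt{2\alpha}\,\Delta = \sqrt{\alpha/h}\arccos(h-1)$ then rearranges the matching condition into the stated formula (\ref{alphabif}).

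For Case~2 ($d\gg1$) the numerical locus suggests $\Delta=\mathcal{O}(1/d)$, so I would first derive the asymptotic profile of $u_0(x;d,\Delta)$ in this regime: outside the hat $u_0$ is an unperturbed sine-Gordon front shifted to match a thin, steep linear inner transition at $|x|<\Delta$, with the slope parameter $h$ fixed by the continuity of $u_0^{\,2}/2 + (1-d)(1-\cos u_0)$ (the Hamiltonian) across $x=\pm\Delta$. One expands $h(d,\Delta)$ to leading order for $d\gg1$ using $\Delta = \mathcal{O}(1/d)$, plugs it into the inner/outer matching of the $v$-equation as in Case~1, and reads off the leading-order balance, which yields (\ref{d>>0}). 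For Case~3(a), $d>1$, the inner region develops a long plateau $u_0\approx\pi$, so $\cos u_0 \approx -1$ and the inner $v$-equation becomes constant-coefficient $v_{xx}=(d-1-2\alpha)v$. As $\Delta\to\infty$ the plateau length grows, the hyperbolic inner solutions pick out the decaying direction up to exponentially small corrections, and matching this to the outer Pöschl-Teller solution at $x=\Delta$, after using the explicit outer values of $u_0$ and its Hamiltonian, gives the condition (\ref{BifDeltaLarge}). The claimed behaviour $\alpha_0(d)=\tfrac{d-1}2 + \mathcal{O}((d-1)^2)$ then follows by Taylor expanding (\ref{BifDeltaLarge}) at $d=1$. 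For Case~3(b), $0<d<1$, the inner equation is oscillatory and $u_0$ resembles a rescaled sine-Gordon front; a similar matching shows that the only way for a bounded even eigenfunction to exist uniformly in $\Delta\to\infty$ forces $\alpha\to 0$. The main obstacle throughout is the bookkeeping in Cases~2 and 3: deriving and quantifying the correct asymptotic profile of $u_0$, controlling the error in the matched asymptotic expansion of the $v$-equation, and showing that the exponentially small remainders in (\ref{BifDeltaLarge}) really are $\mathcal{O}(e^{-\sqrt{d-1}\,\Delta})$ rather than algebraic.
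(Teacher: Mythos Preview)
Your reduction to the scalar $v$–eigenvalue problem and your treatment of Case~1 are essentially the paper's argument: the decoupling of the linearisation into blocks $\mathcal{L}$ and $\mathcal{L}+2\alpha$, the P\"oschl--Teller outer mode, the cosine inner mode, and the matching at $x=\pm\Delta$ via $\tanh(\Delta-x^*)=\sqrt{(2-h)/2}$ are all exactly what the paper does (it cites \cite{Derks2012} for the eigenvalue relation and then substitutes $\Lambda=-2\alpha$). Case~2 is also in line with the paper: there $u_0^-\to\pi$ as $d\to\infty$, so the inner profile genuinely collapses to the constant $\pi$ and the constant-coefficient inner equation you describe is correct at leading order.

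The real gap is in Case~3(a). For fixed $d>1$ and $\Delta\gg1$ the inner profile $u_0$ is \emph{not} close to $\pi$ near the matching points $x=\pm\Delta$; it equals $u_0^-=\arccos((2-d)/d)\in(0,\pi)$ there and only reaches the plateau $\pi$ after a boundary layer of width $\mathcal{O}(1/\sqrt{d-1})$. Concretely, the paper shows (Lemma~\ref{lem.front_d>1}) that on $(-\Delta,0)$
\[
u_0(x)\approx 4\arctan\!\bigl(e^{\xi}\bigr)-\pi,\qquad \xi=\sqrt{d-1}\,(x+\Delta)+L,
\]
so the inner $v$-equation is a \emph{rescaled} P\"oschl--Teller problem with solutions $e^{\mp\xi\sqrt{1+\tilde\Lambda}}\bigl(\tanh\xi\pm\sqrt{1+\tilde\Lambda}\bigr)$, $\tilde\Lambda=\Lambda/(d-1)$, not the constant-coefficient equation $v_{xx}=(d-1-2\alpha)v$. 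Your matching at $x=\Delta$ with a constant-coefficient inner solution yields
\[
\sqrt{d-1-2\alpha}\,\Bigl(\tfrac{1}{\sqrt d}+\sqrt{1-2\alpha}\Bigr)+\tfrac{\sqrt{1-2\alpha}}{\sqrt d}-2\alpha+\tfrac1d=0,
\]
which is \emph{not} equivalent to (\ref{BifDeltaLarge}); for instance at $d=2$ your condition gives $\alpha_0\approx0.466$ while (\ref{BifDeltaLarge}) gives $\alpha_0\approx0.405$. The missing ingredient is precisely the P\"oschl--Teller structure of the inner eigenfunction across the boundary layer, which produces the extra factor $\bigl(\tfrac{d-1}{\sqrt d}+\sqrt{d-1-2\alpha_0}\bigr)$ and the second bracket in (\ref{BifDeltaLarge}) when one evaluates $\tanh\xi$ at the matching point. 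The same caveat applies, more mildly, to Case~3(b): the inner profile there is a rescaled sine-Gordon front on the whole of $|x|<\Delta$, not a constant plateau, and the paper's argument uses this explicitly to force $\Lambda=o(1)$.
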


The bifurcation locus $\alpha(\Delta)$ in case 1 can not be distinguished from the numerics shown in
Figure~\ref{q=1pitchforklocus}. The approximations of the bifurcation locus in cases $2$ and $3(a)$ are plotted in Figure \ref{bifanalysis} and compared with the numerically computed ones seen in the previous section. Here we see
excellent agreement in the respective limits for $d\to \infty$ and for
$\Delta \to \infty$ with $d>1$.  For fixed $0<d<1$, case $3(b)$ clarifies the numerical results in
Figure~\ref{qne1alphabif}(a) (where $d=0.5$) and the sharp downturn in
the $d(\alpha)$ curve in Figure~\ref{qdeltabifplane}(b).

\begin{figure}
	\captionsetup[subfigure]{labelformat=empty}
	\centering
	\subfloat[]{\includegraphics[scale=0.4]{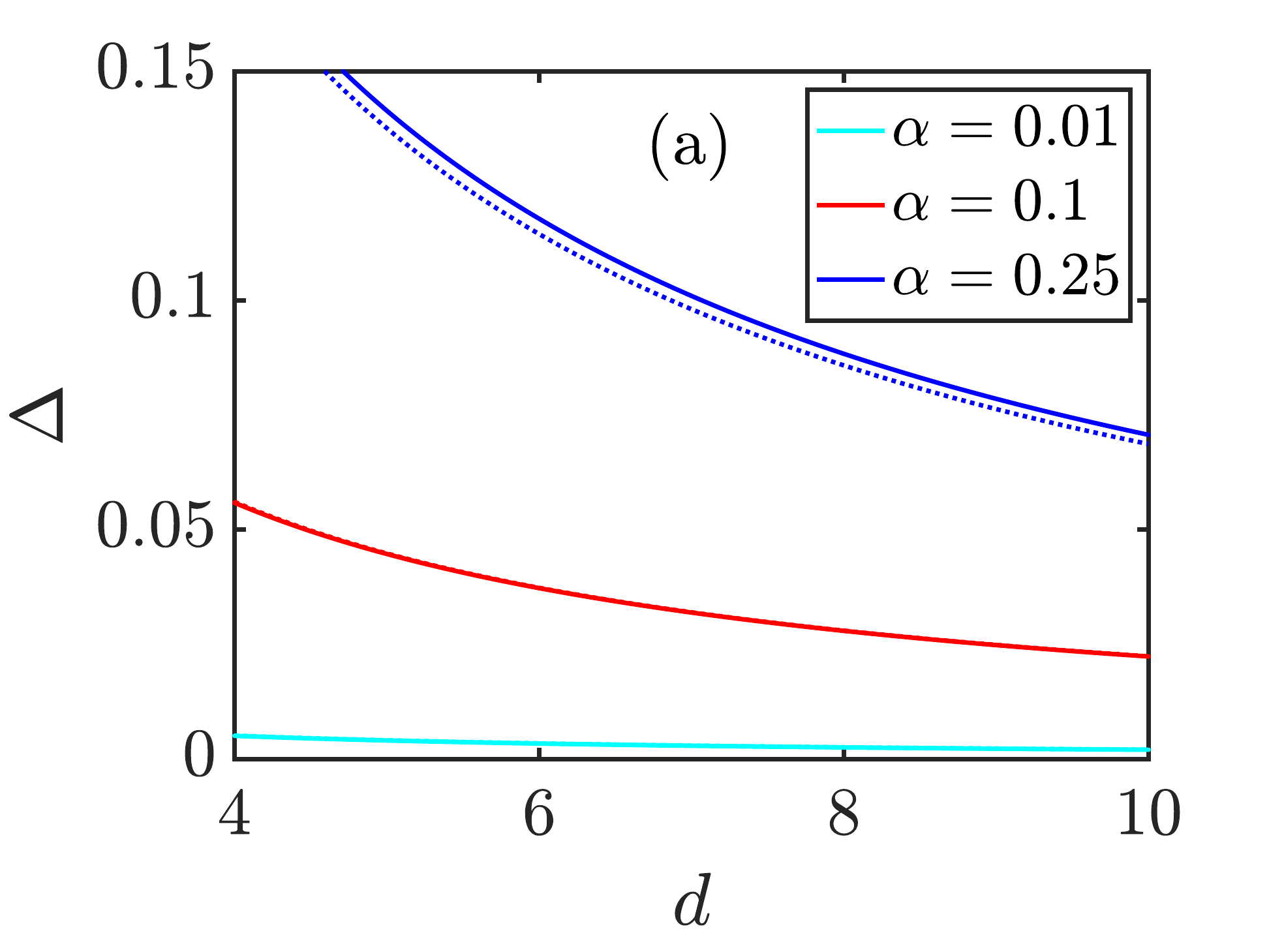}}
	\subfloat[]{\includegraphics[scale=0.4]{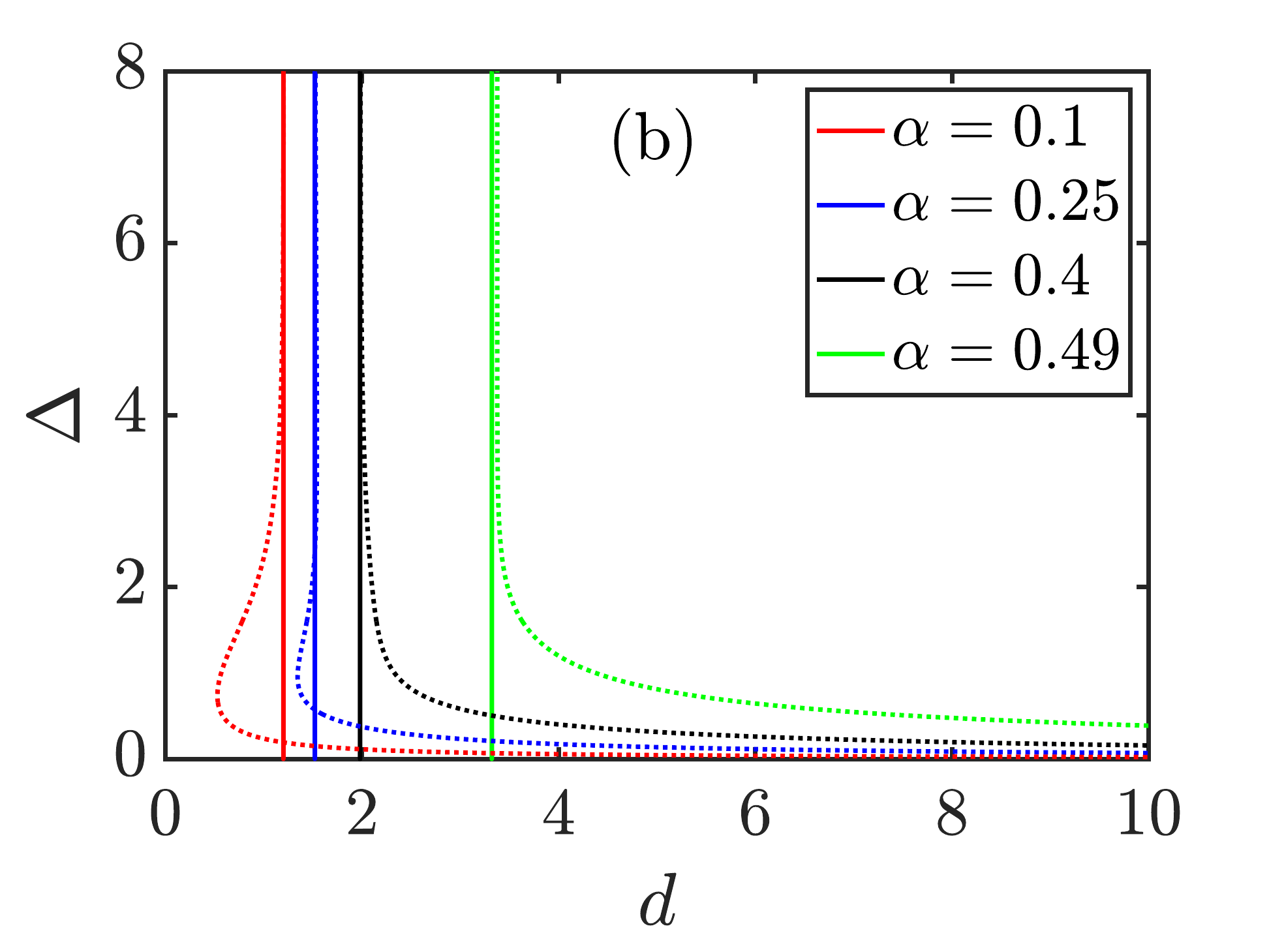}} 
	\vspace{-12pt}
	\caption{The solid curves in (a) shows the approximation (\ref{d>>0}) of the bifurcation locus in the $(d,\Delta)$ plane for fixed $\alpha$ and large $d.$ The solid lines in (b) shows the approximation (\ref{BifDeltaLarge}) of the bifurcation locus for fixed $\alpha$ and large $\Delta.$ In both (a) and (b) the dashed curves correspond to the numerics presented in the previous section.}
	\label{bifanalysis}
\end{figure}

The remainder of this section is spent proving this theorem.  First we
consider the solutions of the BVP (\ref{alphasys2}-\ref{BCS2}) with $v(x)=0$. When $v(x)=0$, the BVP reduces to,
\begin{gather}
  u_{xx} = (1-d\rho_0(x;\Delta))\sin u \label{v=0sg}
  \\
  \lim_{x\to-\infty}u(x)=0 \quad \text {and} \quad
  \lim_{x\to+\infty}u(x)=2\pi. \label{BCS3}
\end{gather}
We call (\ref{v=0sg}) the inhomogeneous sine-Gordon equation. The
existence of fronts that connect $u=0$ to $u=2\pi$ for all
$d\in \mathbb{R}$ and $\Delta>0$ is shown in \cite{Derks2012}. The
construction used in this paper is based on the following idea which is illustrated in Figure \ref{ppd}. Since
the spatial inhomogeneity is piecewise constant, we can
interpret~(\ref{v=0sg}) as a homogeneous Hamiltonian system in both
individual regions $\vert x\vert>\Delta$ and $\vert x \vert <\Delta$.

The Hamiltonian of (\ref{v=0sg}) in the region
$\vert x \vert >\Delta,$ i.e. when $\rho_0=0$, is
\begin{equation}\label{eq:H0}
H_0(u,p) = \frac{1}{2}p^2+\cos(u)-1,
\end{equation}
with $p=u_x$. The Hamiltonian is chosen such
that it vanishes on the heteroclinic connection between $(0,0)$ and
$(2\pi,0)$.  This heteroclinic connection corresponds to the stationary sine-Gordon front, i.e. $u_{ sG}^+$ in (\ref{travelling}) with $c=0$, and is explicitly described by
\begin{equation*}
\left(
\begin{array}{c}
u(x)\\
p(x)
\end{array}\right) = \left(
\begin{array}{c}
4\arctan(e^{x+x^*})\\
2\sech(x+x^*)
\end{array}\right), \quad \text{where} \quad x^* \in \mathbb{R}.
\end{equation*}
The front solution to the inhomogeneous sine-Gordon equation has
to lie on this heteroclinic connection for $|x|>\Delta$, hence the
solution has to satisfy $H_0(u(x),p(x))=0$ for $|x|>\Delta$.  

On the other hand, in the region $\vert x \vert <\Delta,$ i.e. when
$\rho_0=1,$ the Hamiltonian is
\begin{equation}\label{eq:H1}
H_1(u,p) = \frac{1}{2}p^2+(1-d)(1+\cos(u)),
\end{equation}
with $p=u_x$. This Hamiltonian is chosen such that it vanishes on the fixed point
$(u,p) = (\pi,0)$. This fixed point is a saddle point when $d>1$ and a
centre when $d<1.$ A front solution of the inhomogeneous sine-Gordon
equation can be characterised by the value of the Hamiltonian $H_1$ on
the interval $|x|<\Delta$.  Denote this value by~$h$. Then a front
solution $u(x)$ of~\eqref{v=0sg} satisfies
$H_1(u(x),p(x))=h$ with $h>0$ for $|x|<\Delta$.  The relations $H_0=0$
and $H_1=h$ at $x=-\Delta$ give the following matching coordinates
\begin{equation}
u^-(h;d) := u(-\Delta)=\arccos\left(-1+\frac{2-h}{d}\right) \quad \text{and} \quad 
p^-(h;d) := p(-\Delta) = \sqrt{\frac{4(d-1)+2h}{d}}. \label{q>1uin}
\end{equation}
Furthermore we have the following symmetry relations
$u^+ = 2\pi -u^-$ and $p^+ = p^-.$ It can be seen from (\ref{q>1uin})
that both $u^-$ and $p^-$ are increasing in $h$ and 
$(u^-,p^-) \to (\pi,2)$ as $h \to 2$.
Consequently the values of the Hamiltonian~$H_1$ relevant for the construction of a stationary
front are $0 < h \leqslant 2$. Finally, the Hamiltonian $H_1$ can be
used to derive a bijection between the length of the inhomogeneity
$\Delta>0$ and the parameter $h\in(0,2)$, thus $h$ can be considered
as a function of $\Delta$.  

When $d=1$, the non-linearity in (\ref{v=0sg}) vanishes in the region
$\vert x\vert <\Delta$ and the construction can be used to show that
the fronts are given explicitly by (\ref{q1front}). When $d\ne1$, it
is no longer possible to construct explicit fronts without employing
the Jacobi elliptic functions. However, the construction above can be
used to show that the front is close to $\pi$ for all $|x|<\Delta$ when
$d\gg1$ and also that its shape for $|x|<\Delta$ is close to the sine-Gordon front
shape~$u_{\rm sG}^+$ when $\Delta\gg1$.

Next we return to the full BVP (\ref{alphasys2}-\ref{BCS2}).  We set
$\bm{w} = (u,v)$ and hence consider boundary conditions
$\bm{w}(-\infty)=(0,0)$ and $\bm{w}(+\infty)=(2\pi,0)$.  We denote the
front solution of the inhomogeneous sine-Gordon equation as
constructed above by $u_0(x;d,\Delta)$. Then $\bm{w}_0 = (u_0,0)$
solves (\ref{alphasys2}) for all $\alpha \in \mathbb{R}.$ We wish to
determine the bifurcation points in the three parameter space at which
the second component becomes non-zero. Due to the non-zero boundary
conditions, it is convenient to set
\begin{equation*}
  \bm{\tilde{w}} = \bm{w}-\bm{w_0} \in H^2(\mathbb{R}) \times H^2(\mathbb{R}).
\end{equation*}

Now, fixing $d,\Delta>0$, we can define
$\mathbf{{F}}: H^2(\mathbb{R}) \times H^2(\mathbb{R}) \times \mathbb{R} \mapsto L^2(\mathbb{R}) \times
L^2(\mathbb{R})$ where
\begin{equation*}
  \mathbf{{F}}(\bm{\tilde{w}};\alpha) = \left(
    \begin{array}{ c }
      \tilde{u}_{xx} \\
      \tilde{v}_{xx}
    \end{array} \right)
  -
  \left(
    \begin{array}{ c }
      (1-d\rho_{0})(\sin(\tilde{u}+u_{0})\cos(\tilde{v})-\sin(u_{0}))  \\
      (1-d\rho_{0})\sin(\tilde{v})\cos(\tilde{u}+u_{0}) -\alpha \sin(2\tilde{v})
    \end{array} \right).
\end{equation*}
Note that $\mathbf{{F}}(\mathbf{0};\alpha) =\mathbf{0}$ for all
$\alpha \in \mathbb{R}$. A necessary condition for the existence of a
bifurcation locus is that the linearisation of
$\mathbf{F}$ about $\bm{\tilde{w}}= \mathbf{0}$ has an eigenvalue
zero. 

Linearising $\mathbf{{F}}(\bm{\tilde{w}};\alpha) $ about
$\bm{\tilde{w}} = \mathbf{0}$ yields the linear operator
$\tilde{\mathcal{L}}_{\alpha}: H^2(\mathbb{R}) \times H^2(\mathbb{R}) \times \mathbb{R} \to
L^2(\mathbb{R}) \times L^2(\mathbb{R})$ where
\begin{equation}
  \tilde{\mathcal{L}}_{\alpha} = \left(
    \begin{array}{c c}
      \mathcal{L} & 0\\
      0 & \mathcal{L}+2\alpha   
    \end{array}\right)  \label{fulllinear}
\end{equation}
and $\mathcal{L}: H^2(\mathbb{R}) \to L^2(\mathbb{R})$ is given by
\begin{equation} \label{reducedlinear} \mathcal{L} =
  D_{xx}-(1-d\rho_{0})\cos(u_0).
\end{equation} 
We call $\tilde{\Lambda}$ an eigenvalue of $\tilde{\mathcal{L}}_{\alpha}$ if there exists a $\bm{W} \in H^2(\mathbb{R}) \times H^2(\mathbb{R})$ such that $\tilde{\mathcal{L}}_{\alpha}\bm{W} = \tilde{\Lambda}\bm{W}$. Since $\tilde{\mathcal{L}}_{\alpha}$ is a self-adjoint operator all eigenvalues are real. Moreover, $\tilde{{\Lambda}}$ is an eigenvalue of $\tilde{\mathcal{L}}_{\alpha}$ if either:
\begin{enumerate}[label=\roman*)]
	\item $\Lambda = \tilde{{\Lambda}}$ is an eigenvalue of $\mathcal{L}$  with eigenfunction $\Psi \in H^2(\mathbb{R})$. Hence $\tilde{{\Lambda}}$ has associated eigenvector $\bm{W}=(\Psi,0)$,
	\item  $\Lambda = \tilde{{\Lambda}}-2\alpha$ is an eigenvalue of $\mathcal{L}$ with eigenfunction $\Psi \in H^2(\mathbb{R})$. Hence $\tilde{{\Lambda}}$  has associated eigenvector $\bm{W}=(0,\Psi)$.
\end{enumerate} 
The continuous spectrum of $\tilde{\mathcal{L}}_{\alpha}$ is determined by the system at $\pm \infty$ and corresponds to the interval $(-\infty, -1+2\alpha]$.

To proceed with the analysis of the existence of an eigenvalue zero of
$\tilde{\mathcal{L}}_\alpha$, we require more knowledge of $u_0$. As
indicated above, such knowledge can be obtained in the cases $d=1$,
$d\gg1$, and $\Delta \gg 1$ without use of the Jacobi elliptic functions.

\begin{figure}
	\centering
	\includegraphics[scale=0.28]{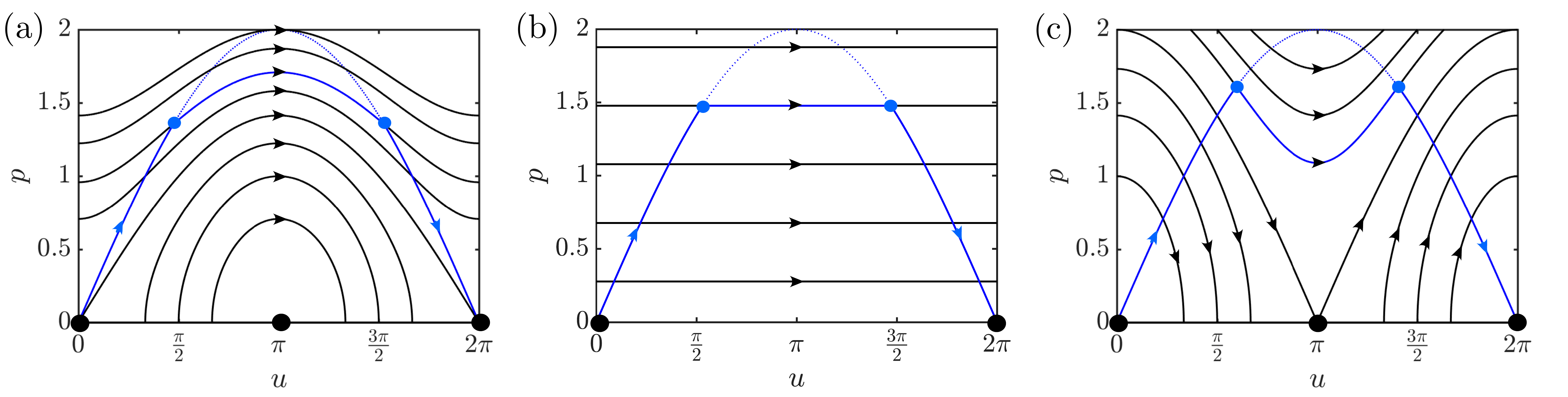}
	\caption{The black trajectories in (a), (b) and (c) correspond to solution curves in the phase plane of $H_1$ (see (\ref{eq:H1})) for $d=0.5$, $d=1$ and $d=2$ respectively. In each panel, the dashed blue curve is the heteroclinic connection in the $H_0$ dynamics (see (\ref{eq:H0})) and the bold blue curve corresponds to front solutions of (\ref{v=0sg}) with $\Delta =1$. Finally the blue points represent the matching points (\ref{q>1uin}).}
	 \label{ppd}
\end{figure}

\subsection*{Case 1: \boldmath{$d=1$}}
When $d=1,$ the BVP (\ref{v=0sg}-\ref{BCS3}) has unique solutions for
all $\Delta>0$ explicitly given by (\ref{q1front}). Thus in this case
the linear operator (\ref{reducedlinear}) becomes
\begin{equation} \label{linearisation}
\mathcal{L} = D_{xx}-(1-\rho_{0})\cos(u_0) = 
\begin{cases}
D_{xx}-\cos(4\arctan(e^{x+x^*})),&  x <-\Delta \\
D_{xx},     &  |x| < \Delta\\
D_{xx}-\cos(4\arctan(e^{x-x^*})),&  x >\Delta,
\end{cases} 
\end{equation}
where $x^*$ is given by~\eqref{eq:Delta_xstar}. This operator is
studied in~\cite{Derks2012} and the following Lemma is proved.
\begin{lemma}[\cite{Derks2012}]
  For fixed $\Delta>0,$ the linear operator (\ref{linearisation}) has
  a largest eigenvalue $\Lambda \in (-1,0)$ given implicitly by the
  largest solution of
	\begin{multline}
          \frac{h}{2}-\sqrt{1+\Lambda}\bigg(\sqrt{1+\Lambda}+\frac{\sqrt{2(2-h)}}{2}\bigg) \\
          =-\sqrt{-\Lambda}\bigg(\sqrt{1+\Lambda}+\frac{\sqrt{2(2-h)}}{2}\bigg)\tan\bigg(\sqrt{\frac{-\Lambda}{2h}}\bigg(\arccos(h-1)\bigg)\bigg) \label{eigenvalueq=1}
	\end{multline}
	where $h(\Delta)\in(0,2)$ is given by the implicit relation
        $\Delta = \arccos (h-1)/\sqrt{2h}$. The eigenvalue has an
        associated eigenfunction $\Psi \in H^2(\mathbb{R})$ given by
	\begin{equation} \label{eigenfunctionq=1}
	\frac{\Psi }{R}= 
	\begin{cases}
	\exp({\sqrt{1+\Lambda}(x+x^{*})})(\tanh(x+x^{*})-\sqrt{1+\Lambda}),&  x < -\Delta \\
	A\cos(\sqrt{-\Lambda}x) ,     &  |x| < \Delta \\
	-\exp({-\sqrt{1+\Lambda}(x-x^{*})})(\tanh(x-x^{*})+\sqrt{1+\Lambda}), & x> \Delta.
	\end{cases} 
	\end{equation}
	In the above, $A$ is a constant found by matching the above at
        either $x=\pm\Delta.$ Furthermore, $R$ is the rescaling
        constant, dependent on $\Delta,$ such that $||\Psi||_{L^2(\mathbb{R})}^2 = 1.$
        Both $A$ and $R$ are given in Appendix~\ref{appB}.
\end{lemma}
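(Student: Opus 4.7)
The plan is to solve $\mathcal{L}\Psi = \Lambda \Psi$ piecewise on the three intervals determined by the discontinuities of $\rho_0$, use the explicit form of $u_0(x;1,\Delta)$ from (\ref{q1front}), and then impose $C^1$-matching at $x=\pm\Delta$ together with $L^2$ decay at $\pm\infty$. In the outer regions the identity $\cos(4\arctan \rme^y) = 1 - 2\sech^2 y$ turns the equation into the Pöschl--Teller problem $\Psi'' = (1+\Lambda)\Psi - 2\sech^2(y)\,\Psi$, with $y = x+x^*$ on the left and $y = x-x^*$ on the right. Writing $k = \sqrt{1+\Lambda}\in(0,1)$, one verifies directly that $f_\pm(y) = \rme^{\pm ky}(\tanh y \mp k)$ are two linearly independent solutions; in each outer piece I would retain the one that decays at the appropriate infinity. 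In the middle region $\mathcal{L}$ reduces to $D_{xx}$, so for $\Lambda<0$ the general solution is $A\cos(\sqrt{-\Lambda}\,x) + B\sin(\sqrt{-\Lambda}\,x)$.

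Next I would exploit the reflection symmetry $u_0(-x) = 2\pi - u_0(x)$, which makes $\mathcal{L}$ commute with the parity map $x\mapsto -x$, so eigenfunctions can be chosen even or odd. Standard Sturm--Liouville theory then says the largest eigenvalue is simple with a nodeless, hence even, eigenfunction; this eliminates the sine in the interior ($B=0$) and fixes the relative minus sign between the two outer pieces appearing in (\ref{eigenfunctionq=1}), since it is exactly this sign that makes $\Psi(-\Delta)=\Psi(+\Delta)$ after using the identity $f_-(-y)=-f_+(y)$ at $\Lambda\in(-1,0)$. With the ansätze in place, matching at $x=+\Delta$ is automatic by symmetry, and matching $\Psi$ and $\Psi'$ at $x=-\Delta$ yields two linear equations involving the outer amplitude and $A$. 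Dividing one by the other eliminates both scale constants and produces a single transcendental equation relating $\Lambda$, $\Delta$ and $x^*$.

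The final algebraic step is to convert the dependence on $x^*$ into dependence on $h$. Using $x^* = \ln\tan(\arccos(1-h)/4)+\Delta$ from (\ref{eq:Delta_xstar}) together with the $d=1$ specialisation of (\ref{q>1uin}), which gives $(u_0(-\Delta),u_0'(-\Delta)) = (\pi,\sqrt{2h})$, I would re-express $\tanh(x^*-\Delta)$ and $\sech(x^*-\Delta)$ purely in terms of $h$ via standard half-angle formulas. Routine simplification then reduces the matching condition to the compact form (\ref{eigenvalueq=1}). The constant $A$ is read off from the value-matching equation, and the overall constant $R$ is fixed by the normalisation $\|\Psi\|_{L^2}=1$, yielding the explicit formulas relegated to Appendix~\ref{appB}.

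To justify that the relevant root lies in $(-1,0)$ and is the largest eigenvalue, I would first note that $\sigma_{\rm ess}(\mathcal{L}) = (-\infty,-1]$, since $\cos u_0 \to 1$ as $x\to\pm\infty$; this rules out point spectrum below $-1$. At $\Lambda=0$ both outer pieces collapse to $\pm\sech(\cdot)$, i.e.\ the translation zero mode of the pure sine-Gordon front, which cannot be matched smoothly to a non-trivial constant in the interior since $\sech$ has non-zero derivative there, so $\Lambda=0$ fails (\ref{eigenvalueq=1}) strictly. A monotonicity-in-$\Lambda$ argument on the two sides of (\ref{eigenvalueq=1}) then yields existence and uniqueness of a root in $(-1,0)$, and the nodeless character of the associated even eigenfunction confirms it is the ground state. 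The main obstacle I anticipate is the trigonometric bookkeeping in the third step: carefully re-expressing $x^*$ in terms of $h$ and collapsing the matching relation into the symmetric form (\ref{eigenvalueq=1}) is substantial, and verifying strict monotonicity of both sides over $\Lambda\in(-1,0)$ uniformly in $h\in(0,2)$ requires attention to the behaviour near the endpoints $\Lambda\to -1^+$ and $h\to 2^-$.
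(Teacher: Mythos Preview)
The paper does not actually prove this lemma: it is stated with attribution to \cite{Derks2012} and used as a black box, so there is no ``paper's own proof'' to compare against. Your sketch is the standard derivation one would expect in that reference---piecewise solution of the Sturm--Liouville problem using the P\"oschl--Teller basis $\rme^{\pm ky}(\tanh y\mp k)$ in the outer regions and trigonometric functions in the interior, reduction to an even ground state by parity, and $C^1$ matching at $x=-\Delta$---and it is essentially correct.

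Two minor points. First, your parenthetical ``$(u_0(-\Delta),u_0'(-\Delta))=(\pi,\sqrt{2h})$'' is a slip: from (\ref{q1front}) one has $u_0(-\Delta)=\pi-\sqrt{2h}\,\Delta=\arccos(1-h)$, not $\pi$; the derivative $\sqrt{2h}$ is correct. This matters for the algebra that converts the matching condition into (\ref{eigenvalueq=1}), since it is precisely $\tanh(x^*-\Delta)$ and $\sech(x^*-\Delta)$ that you must rewrite via $u_0(-\Delta)=\arccos(1-h)$. Second, the lemma as stated allows the transcendental equation to have several roots and picks the \emph{largest}; your claimed uniqueness in $(-1,0)$ is stronger than what is asserted and need not hold (other even eigenvalues would satisfy the same equation). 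The safe route is exactly what the lemma says: show there is at least one root in $(-1,0)$ and identify the largest root with the ground state via the nodeless criterion, without claiming global uniqueness.
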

This Lemma implies that for fixed values of $\Delta>0$, the operator
$\tilde{\mathcal{L}}_{\alpha}$ has an eigenvalue zero at
$\alpha = -\Lambda/2$ with associated eigenvector $(0,\Psi) \in H^2(\mathbb{R}) \times H^2(\mathbb{R})$. Replacing $\Lambda$ by $-2\alpha$
in~(\ref{eigenvalueq=1}) yields (\ref{alphabif}) which completes the
first part of the proof of Theorem \ref{bifanalysis2}.

\subsection*{Case 2: \boldmath{$d\gg1$}}
Next we seek approximations of front solutions to (\ref{v=0sg}) when
$d\gg1$. It is apparent from~\eqref{q>1uin} that for any $\Delta>0$, i.e. any $h(\Delta) \in (0,2),$ the coordinates $(u^-,p^-) \to (\pi,2)$ as $d\to \infty$. To be more precise, by setting
$\epsilon = 1/\sqrt{d},$ \eqref{q>1uin} implies
$u^- = \pi-\sqrt{2(2-h)}\epsilon+\mathcal{O}(\epsilon^3)$. Thus, using the symmetry
$u^- = 2\pi-u^+,$ it is apparent that in the region $\vert x \vert \leq\Delta$
\begin{equation*}
\vert u_0(x)-u^- \vert \leqslant \vert u^+ - u^- \vert  = 2\vert \pi -
u^- \vert = \mathcal{O}(\epsilon). 
\end{equation*}
Consequently, $u_0(x) = \pi +\mathcal{O}(\epsilon),$ uniform in the
region $\vert x \vert \leq\Delta$. Therefore when $d \gg 1$ stationary
fronts to the system (\ref{v=0sg}) can be approximated by
\begin{equation} \label{frontapproxq>>0}
u_0(x;d,\Delta)= 
\begin{cases}
4\arctan(e^{x+x^*}),&  x \leqslant -\Delta \\
\pi +\mathcal{O}\big(1/\sqrt{d}\big) ,      & \vert x \vert \leqslant \Delta\\
4\arctan(e^{x-x^*}), & x \geqslant \Delta.
\end{cases}
\end{equation}
To determine the translation~$x^*$, we will use the expressions~(\ref{q>1uin}) for the value at the matching point.  Since $d\gg1$ these expressions imply
$$x^* = \Delta-\frac{2\sqrt{2(2-h)}}{\sqrt{d}}+\mathcal{O}\bigg(\frac{1}{d}\bigg).$$
The approximation (\ref{frontapproxq>>0}) of $u_0$ in the linear
operator $\mathcal{L}$ defined in~(\ref{reducedlinear}) gives
the following Lemma about the eigenvalues of the operator $\mathcal{L}$.
\begin{lemma}
  Consider $d \gg 1$. For any $\Lambda \in (-1,0)$, there is a
    $\Delta$ satisfying
 \begin{equation}\label{eq:Delta_d_large}
      \Delta = \frac{-\Lambda}{\sqrt{1+\Lambda}}
      \bigg(\frac{1}{d} +
      \mathcal{O}\bigg(\bigg(\frac{1}{d}\bigg)^{\frac{3}{2}}\bigg)\bigg).  
  \end{equation} 
 such that the linear operator~$\mathcal{L}$ as defined in~(\ref{reducedlinear}) has an
  eigenvalue $\Lambda \in (-1,0)$.
\end{lemma}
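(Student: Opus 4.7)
The plan is to construct the eigenfunction $\Psi$ of $\mathcal{L}$ explicitly by solving $\mathcal{L}\Psi=\Lambda\Psi$ in each of the three regions determined by the jumps of $\rho_0$, and then imposing $C^1$-matching at $x=\pm\Delta$. Using the approximation (\ref{frontapproxq>>0}), the operator in the outer regions $|x|>\Delta$ is exactly the linearised sine-Gordon operator $D_{xx}-(1-2\sech^2(x\mp x^*))$, whose decaying solutions at eigenvalue $\Lambda\in(-1,0)$ are the same Jost-type expressions appearing in the $d=1$ eigenfunction (\ref{eigenfunctionq=1}); these depend only on $\Lambda$ and $x^*$, not on $d$ directly. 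In the middle region $|x|<\Delta$, the factor $(1-d\rho_0)$ equals $1-d$, and $\cos u_0=-1+\mathcal{O}(1/d)$, so the eigenvalue equation becomes $\Psi_{xx}=(d-1+\Lambda)\Psi+\mathcal{O}(1)\Psi$; the $x\mapsto-x$ symmetry of $\mathcal{L}$ then selects the even leading-order solution $A\cosh(\mu x)$ with $\mu=\sqrt{d-1+\Lambda}$.

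By symmetry, $C^1$-matching at $x=\Delta$ automatically implies matching at $x=-\Delta$, so the eigenvalue condition reduces to a single scalar transcendental equation
\begin{equation*}
\mu\tanh(\mu\Delta) = -\frac{\Lambda+\nu\tanh y+\tanh^2 y}{\tanh y+\nu},\qquad \nu:=\sqrt{1+\Lambda},\quad y:=\Delta-x^*.
\end{equation*}
I will then use the matching value (\ref{q>1uin}) together with the explicit outer expression $u_{\rm sG}^+(-\Delta)=4\arctan(e^{x^*-\Delta})$ to obtain the expansion $y=\Delta-x^*=\mathcal{O}(1/\sqrt d)$. For $\Delta$ of the expected size $\mathcal{O}(1/d)$ the product $\mu\Delta=\mathcal{O}(1/\sqrt d)$ is also small, so both sides admit Taylor expansions in their small arguments: the left-hand side becomes $(d-1+\Lambda)\Delta(1+\mathcal{O}(1/d))$ and the right-hand side becomes $-\Lambda/\sqrt{1+\Lambda}+\mathcal{O}(1/\sqrt d)$. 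Solving for $\Delta$ and expanding $(d-1+\Lambda)^{-1}=d^{-1}+\mathcal{O}(d^{-2})$ yields the claimed formula (\ref{eq:Delta_d_large}).

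The main obstacle will be bookkeeping the subleading contributions to confirm that the remainder is genuinely $\mathcal{O}(d^{-3/2})$. In particular, one must verify that the $\mathcal{O}(1)$ spatially varying piece of the middle-region potential (coming from the subleading terms in $\cos u_0$), the $\mathcal{O}(1/\sqrt d)$ uncertainty in $x^*$, and the higher-order terms in the Taylor expansions of both sides of the matching equation all combine to give an error in $\Delta$ no worse than $\mathcal{O}(d^{-3/2})$. A regular perturbation argument on the middle-region ODE, treating the $\mathcal{O}(1)$ term as a bounded perturbation of the dominant $\mathcal{O}(d)$ coefficient, produces corrections of relative size $\mathcal{O}(1/d)$ in the logarithmic derivative $\Psi'/\Psi$ at $x=\Delta$ and therefore does not spoil the leading-order matching.
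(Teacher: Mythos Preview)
Your proposal is correct and follows essentially the same approach as the paper's proof: explicit outer solutions from the sine-Gordon linearisation, an even $\cosh$-type inner solution valid because $u_0\approx\pi$, and $C^1$-matching at the jump to obtain a scalar transcendental equation which is then expanded using $\Delta=\mathcal{O}(1/d)$ and $\Delta-x^*=\mathcal{O}(1/\sqrt d)$. The only cosmetic differences are that the paper matches at $x=-\Delta$ rather than $x=\Delta$ (equivalent by the even symmetry you invoke), uses the cruder inner rate $\sqrt d$ instead of your $\mu=\sqrt{d-1+\Lambda}$ (these agree to the relevant order), and matches $\Psi$ and $\Psi'$ separately rather than working directly with the logarithmic derivative $\Psi'/\Psi$; none of these affect the argument or the error bookkeeping you outline.
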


\begin{proof}
  We call $\Lambda$ an eigenvalue of $ \mathcal{L}$ if there exists an
  eigenfunction $\Psi \in H^2(\mathbb{R})$ such that
  $\mathcal{L}\Psi = \Lambda\Psi,$ i.e.
  \begin{subequations}
    \begin{align}
      [D_{xx}-\cos(u_{0})]\Psi &
                                 = \Lambda\Psi, \qquad \vert x \vert >\Delta \label{op11}
      \\
      [D_{xx}-(1-d)\cos(u_{0}))]\Psi &
                                       = \Lambda\Psi, \qquad \vert x \vert <\Delta, \label{op22}
    \end{align}
  \end{subequations}
  Since $\mathcal{L}$ is a Sturm-Liouville operator the eigenvalue
  $\Lambda$ has to be real. For any eigenvalue~$\Lambda$, the
  eigenfunction $\Psi \in H^2(\mathbb{R})$, hence $\Psi \to 0$ and
  $\Psi_x \to 0$ for $\vert x \vert \to \infty$. These boundary
  conditions, the fact that $u_0-\pi$ is an odd function and the equations
  (\ref{op11}-\ref{op22}) imply that $\Psi$ is an even function. Using
  the results for the sine-Gordon linearisation in~\cite{Mann1997},
  the solutions for the linear ordinary differential
  equation~(\ref{op11}) for $x<-\Delta$ are spanned by
  \begin{subequations}
	\begin{align}
	\Psi^1(x)&
	=  e^{-\sqrt{1+\Lambda}(x+ x^*)}(\tanh(x+ x^*)+\sqrt{1+\Lambda}) \label{eigen1}
	\\
	\Psi^2(x) &
	= e^{\sqrt{1+\Lambda}(x+ x^*)}(\tanh(x+ x^*)-\sqrt{1+\Lambda}). \label{eigen2}
	\end{align}
\end{subequations}
  
Since we are interested in $\Psi \in H^2(\mathbb{R})$, in the region
$x<-\Delta$ we consider the decaying solution for (\ref{op11}) 
  \begin{equation}\label{Mann}
    \Psi^{-}(x) =
    e^{\sqrt{1+\Lambda}(x+x^*)}(\tanh(x+x^*)-\sqrt{1+\Lambda})
  \end{equation}
  with derivative,
  \begin{equation} \label{MannDiff}
    \frac{d\Psi^-}{dx} =
    e^{\sqrt{1+\Lambda}(x+x^*)}(\sqrt{1+\Lambda}\tanh(x+x^*) -
    \Lambda-\tanh^2(x+x^*)). 
  \end{equation}
  On the other hand, for $d\gg1$ the function
  $u_0(x)=\pi+\mathcal{O}(1/\sqrt d)$, uniform for
  $|x|\leqslant\Delta$, hence the
  ODE~(\ref{op22}) can be written as
  \begin{equation*}
    \frac{1}{d}\frac{d^2\Psi}{dx^2} =
    \bigg(1+\mathcal{O}\bigg(\frac{1}{d}\bigg)\bigg)\Psi, \quad
    \text{uniform for } \vert x \vert \leqslant \Delta. 
  \end{equation*}
  For any fixed $\xi_0$, the even solutions of this linear ordinary
  differential equation are given by
  \begin{equation*}
    \Psi(x) = A\cosh(\sqrt{d}x)+\mathcal{O}\bigg(\frac{1}{d}\bigg)
    \quad \text{and} \quad  \frac{d\Psi}{dx} =
    A\sqrt{d}\sinh(\sqrt{d}x)+\mathcal{O}\bigg(\frac{1}{\sqrt{d}}\bigg),
    \quad |x|\leqslant \frac{\xi_0}{\sqrt d},
  \end{equation*}
  where $A$ is a matching constant. If $\Delta\leqslant
    \xi_0/\sqrt d$, setting
  $\Psi^{-}(-\Delta) =\Psi(-\Delta)$ yields
  \[
 A
 =-\frac{1}{\cosh(\sqrt{d}\Delta)}\bigg(\sqrt{1+\Lambda}-\frac{2\Lambda\sqrt{2(2-h)}}{\sqrt{d}}
 +\mathcal{O}\bigg(\frac{1}{d}\bigg)\bigg).
 \]
   Since we require a
  continuously differentiable solution in $H^2(\mathbb{R})$ we determine the eigenvalue
  $\Lambda$ by matching the derivatives at $x=-\Delta.$ Doing so one
  obtains the equality
  \begin{equation*}  \frac{-\Lambda}{\sqrt{1+\Lambda}}  =
    \sqrt{d}\tanh(\sqrt{d}\Delta)+\mathcal{O}\left(\frac{1}{\sqrt{d}}\right).
  \end{equation*}
  Since $\Delta>0$ the equality above only holds when
  $\Lambda \in (-1,0)$ and $\Delta$ satisfies
  \begin{equation} \label{eigenvalueequation}
\sqrt d\, \Delta =
\frac{-\Lambda}{\sqrt{1+\Lambda}}\bigg(\frac{1}{\sqrt
  d}+\mathcal{O}\bigg( \frac{1}{d}\bigg)\bigg).
  \end{equation} 
  Hence for every $\Lambda\in(-1,0)$, there is a $\Delta$ given by~\eqref{eigenvalueequation}, such that $\Lambda$ is an eigenvalue of the linear operator~\eqref{reducedlinear}.
\end{proof}
For fixed values of $\alpha\in\left(0,1/2\right)$
and $d \gg1$, the above implies that
$\tilde{\mathcal{L}}_{\alpha}$ has an eigenvalue zero for a 
  $\Delta$ satisfying~\eqref{eigenvalueequation} with
  $\Lambda=-2\alpha$.
Substituting this into (\ref{eigenvalueequation}) completes the second
part of the proof of Theorem \ref{bifanalysis2}.

\subsection*{Case 3(a): \boldmath{$\Delta \gg1$, $d>1$}}
Next we approximate the bifurcation locus when $\Delta \gg1$ and
$d>1$. Again, we must first seek approximations to front solutions of
the BVP (\ref{v=0sg}-\ref{BCS3}) when $\Delta \gg1$.
When $d>1$, we can apply the
coordinate transformation $\xi = \sqrt{d-1}\,x+L$ to (\ref{v=0sg}) in
the region $\vert x \vert < \Delta$. The spatial coefficient,
$\sqrt{d-1}$, represents a scaling whilst $L$ is a translation. Under
such transformation the inhomogeneous sine-Gordon
equation~(\ref{v=0sg}) for $|x|<\Delta$ can be written as the
Hamiltonian system
\begin{equation}
  (\tilde{u},\tilde{p})^\top_\xi = J\nabla H(\tilde u,\tilde p),
    \quad J = \left(
\begin{array}{ c c }
0 & 1 \\
-1 & 0
\end{array} \right),  \label{SGxi}
\end{equation}
with Hamiltonian
\begin{equation*}
H(\tilde u,\tilde p) = \frac{1}{2}\tilde{p}^2 - (1+\cos(\tilde{u})), 
\end{equation*}
for $-\Delta+L < \xi/\sqrt{d-1} < \Delta+L$. 
We have
defined the Hamiltonian such that it is zero on the saddle points
$(\tilde{u},\tilde{p})=((1+2k)\pi,0)$ where $k\in \mathbb{Z}.$
Applying the shift transformation
$(\tilde{u}(\xi),\tilde{p}(\xi)) \to (\tilde{u}(\xi)+\pi,
\tilde{p}(\xi))$ the system (\ref{SGxi}) is equivalent to the
stationary sine-Gordon equation. Hence (\ref{SGxi}) has symmetric
heteroclinic connections between saddle points
$(\tilde{u},\tilde{p})=(-\pi,0)$ and $(\tilde{u},\tilde{p})=(\pi,0)$
described by
\begin{equation}
\left(
\begin{array}{c}
\tilde{u}_{\text{het}}^{\pm}(\xi)\\
\tilde{p}_{\text{het}}^{\pm}(\xi)
\end{array}\right) = \left(
\begin{array}{c}
4\arctan(e^{\pm\xi}) -\pi\\
\pm 2\sech(\xi)
\end{array}\right). 
\label{pikxi}
\end{equation}
When $\Delta$ is large, the shape of the front solution $u_0(x;d,\Delta)$ will be
close to the this heteroclinic orbit for $-\Delta<x<0$. Following ideas
from~\cite{Derks2007} we can approximate an orbit of the
system~(\ref{SGxi}) close to the heteroclinic
connections~(\ref{pikxi}). We will focus on solutions close
to~$\tilde{u}_{\text{het}}^+(\xi)$, which pass through
$(\tilde u,\tilde p)=(\pm\pi,\epsilon)$ where $\epsilon$ is a small
parameter, see Figure \ref{ppp}. The Hamiltonian structure implies
that these solutions also pass through
$(\tilde u,\tilde p)=(0,\sqrt{4+\epsilon^2})$. After
obtaining the approximation, we will show how a large length $\Delta$
can be linked to the small parameter~$\epsilon$.
\begin{figure}
	\centering
	\includegraphics[scale=0.5]{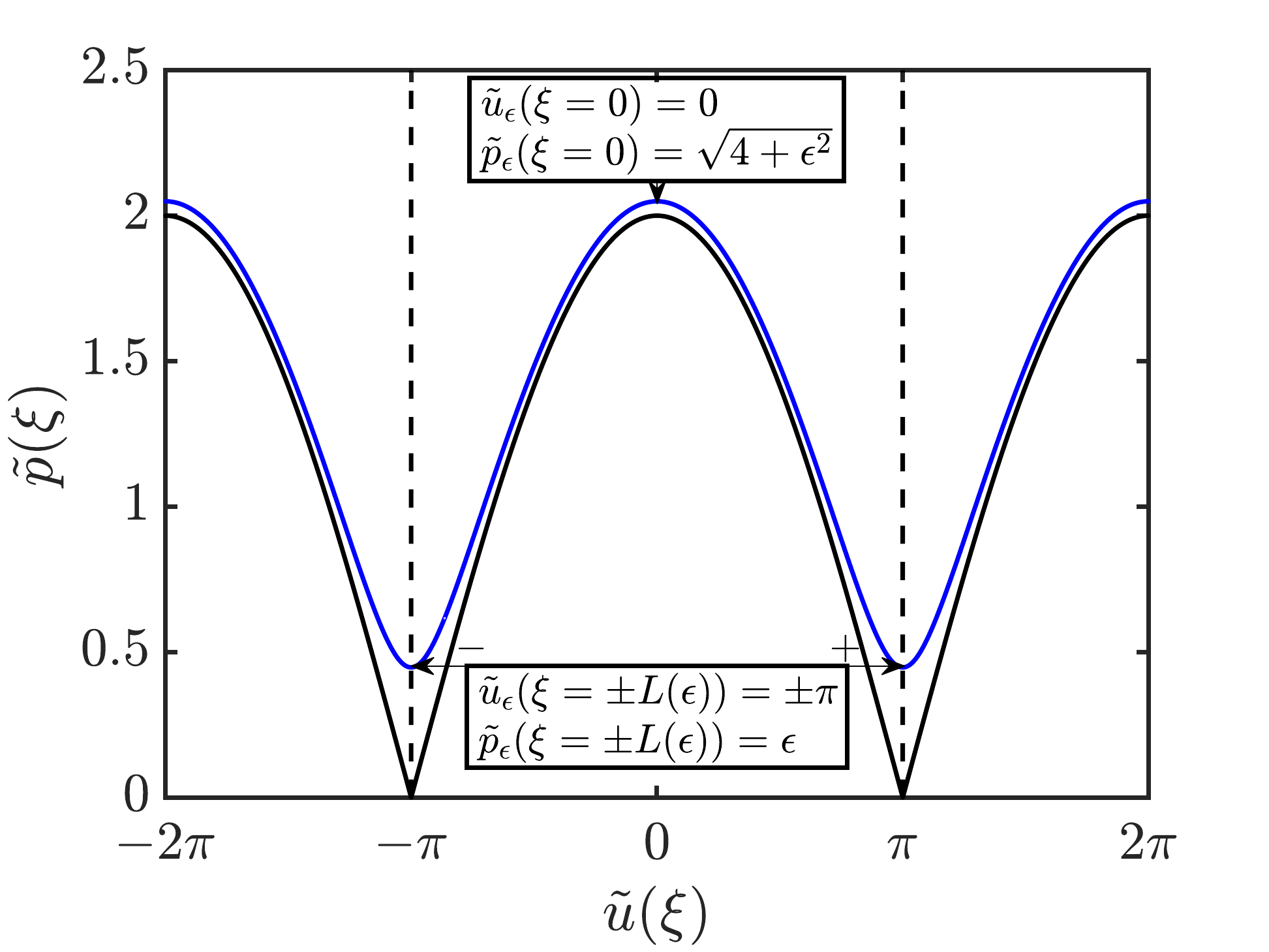}
	\caption{A sketch of the phase plane of system
          (\ref{SGxi}). The black curves are the heteroclinic
          connections. When $\Delta\gg1$, a front solution, for
          $|x|<\Delta$, will lie on the blue curve, hence is close to the
          heteroclinic connection. }
	\label{ppp}
\end{figure}

\begin{lemma} \label{theorempert} For $0<\epsilon\ll 1$, let
  $\tilde{u}_{\epsilon}(\xi)$ denote the orbit of system (\ref{SGxi})
  with initial conditions $\tilde u(\xi=0)=0$ and $\tilde
  p(\xi=0)=\sqrt{4+\epsilon^2}$.  
  This orbit can be approximated by
  \begin{equation}
    \tilde{u}_{\epsilon}(\xi) =  4\arctan(e^{\xi})-\pi +\frac{\epsilon^2}{8}\bigg(\frac{\xi}{\cosh(\xi)}+\sinh(\xi)\bigg) + \epsilon^4 R(\xi; \epsilon), \quad \vert \xi \vert \leqslant L(\epsilon),\label{uouter}
  \end{equation}
  where $L(\epsilon)$ is such that
  $\tilde{u}_{\epsilon}(\xi=L(\epsilon)) = \pi$,
  $\tilde{p}_{\epsilon}(\xi =L(\epsilon)) =\epsilon $. This implies
  \begin{equation*}
    L(\epsilon) = \ln \bigg(\frac{8}{\epsilon}\bigg) +  \mathcal{O}(\epsilon).
  \end{equation*}
  Finally the approximation is $\mathcal{O}(\epsilon)$ whilst the
  remainder term
  $\epsilon^4R(\xi;\epsilon) = \mathcal{O}(\epsilon^2),$ uniform in
  $\vert \xi \vert \leqslant L(\epsilon)$.
\end{lemma}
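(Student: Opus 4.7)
The plan is to set up a regular perturbation about the heteroclinic connection $\tilde{u}_{\text{het}}^+$ and exploit energy conservation to pin down~$L(\epsilon)$. Evaluating the Hamiltonian at the initial data gives $H(\tilde{u}_\epsilon(0),\tilde{p}_\epsilon(0)) = \tfrac{1}{2}(4+\epsilon^2)-2 = \epsilon^2/2$, which is conserved along the flow of~(\ref{SGxi}). Hence $\tilde{p}_\epsilon^2 = \epsilon^2 + 2(1+\cos\tilde{u}_\epsilon)$ everywhere, and the stated identity $\tilde{p}_\epsilon(L)=\epsilon$ is automatic from $\tilde{u}_\epsilon(L)=\pi$. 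It remains to describe $\tilde{u}_\epsilon$ as a function of~$\xi$ and then invert $\tilde{u}_\epsilon(L)=\pi$ for~$L$.

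\textbf{Leading order.} Setting $\tilde{u}_\epsilon = \tilde{u}_{\text{het}}^+ + \eta$ and using the identity $\cos(\tilde{u}_{\text{het}}^+) = -1+2\sech^2\xi$, the perturbation satisfies $\eta_{\xi\xi} - (1-2\sech^2\xi)\eta = N(\eta,\xi)$, where $N(\eta,\xi) := \sin(\tilde{u}_{\text{het}}^+)+\cos(\tilde{u}_{\text{het}}^+)\eta - \sin(\tilde{u}_{\text{het}}^+ +\eta) = \mathcal{O}(\eta^2)$, with initial data $\eta(0)=0$ and $\eta'(0) = \sqrt{4+\epsilon^2}-2 = \epsilon^2/4 + \mathcal{O}(\epsilon^4)$. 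By translation invariance, the linearised operator has $\phi_1(\xi):= \tilde{u}_{\text{het}}^{+\prime}(\xi) = 2\sech(\xi)$ in its kernel, and reduction of order $\phi_2 = \phi_1\int\phi_1^{-2}\,d\xi$ yields the second independent solution $\phi_2(\xi) = \tfrac{1}{4}\bigl(\xi/\cosh\xi + \sinh\xi\bigr)$ with constant Wronskian $W(\phi_1,\phi_2) = 1$. Since $\phi_1(0)=2$, $\phi_1'(0)=0$, $\phi_2(0)=0$, $\phi_2'(0)=\tfrac{1}{2}$, the matching of the initial data forces the leading correction to be $\epsilon^2 u_1(\xi) = \tfrac{\epsilon^2}{2}\phi_2(\xi) = \tfrac{\epsilon^2}{8}(\xi/\cosh\xi + \sinh\xi)$, which is precisely the term displayed in~(\ref{uouter}).

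\textbf{Remainder and endpoint.} Writing $\eta = \epsilon^2 u_1 + \epsilon^4 R$ and inverting the linear part via variation of parameters against $\phi_1,\phi_2$, the function $R$ solves a fixed-point equation whose source is at leading order $\epsilon^{-4}N(\epsilon^2 u_1,\xi) = \mathcal{O}(u_1^2) = \mathcal{O}(\sinh^2\xi)$. A naive Gronwall bound loses too many powers of $\epsilon$ on the exponentially long interval, so the main technical obstacle is to control $R$ uniformly for $|\xi|\leqslant L(\epsilon)$. I would handle this by working in a sup norm weighted by the exponential growth of $\phi_2$, for instance $\|R\|_* := \sup_{|\xi|\leqslant L(\epsilon)} e^{-2|\xi|}|R(\xi)|$: in this norm the Green's operator associated to $\phi_1,\phi_2$ is bounded, and a contraction argument on a ball of fixed radius gives $\|R\|_* = \mathcal{O}(1)$, so $\epsilon^4 R = \mathcal{O}(\epsilon^4 e^{2|\xi|})$, which is $\mathcal{O}(\epsilon^2)$ at $\xi = L(\epsilon)$ and smaller elsewhere. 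Finally $L(\epsilon)$ is found from $\tilde{u}_\epsilon(L)=\pi$: using $\arctan(e^L) = \pi/2 - e^{-L} + \mathcal{O}(e^{-3L})$ and $\sinh L = e^L/2 + \mathcal{O}(e^{-L})$, the leading-order balance $-4 e^{-L} + \epsilon^2 e^L/16 = 0$ gives $e^{2L} = 64/\epsilon^2$, i.e.\ $L(\epsilon) = \ln(8/\epsilon) + \mathcal{O}(\epsilon)$; since $\epsilon^2 u_1(L) = \mathcal{O}(\epsilon)$ the total perturbation $\tilde{u}_\epsilon - \tilde{u}_{\text{het}}^+$ is $\mathcal{O}(\epsilon)$ uniformly on this interval, as claimed.
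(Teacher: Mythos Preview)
Your argument follows essentially the same route as the paper: expand about the heteroclinic $\tilde{u}_{\text{het}}^+$, identify the two homogeneous solutions $\sech\xi$ and $\xi\sech\xi+\sinh\xi$, fix the coefficient $1/8$ from the initial data, and then solve $\tilde{u}_\epsilon(L)=\pi$ by balancing $-4e^{-L}$ against $\epsilon^2 e^L/16$ to get $L=\ln(8/\epsilon)+\mathcal{O}(\epsilon)$. The only difference is in the remainder: the paper simply checks consistency of $\epsilon^4 R$ at the endpoint $\xi=\pm L$ and infers the uniform bound from that, whereas you sketch a weighted-norm contraction argument, which is in principle the more rigorous way to close the estimate.
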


\begin{proof}
  Since the $(\tilde{u}_{\epsilon}, \tilde{p}_{\epsilon})$ orbit is
  unbounded in $\tilde u$, while the heteroclinic is bounded (see
  Figure \ref{ppp}), any approximation is only going to be valid for
  $\vert \xi \vert \leqslant L(\epsilon),$ where $L(\epsilon)$ is such
  that $\tilde{u}_{\epsilon}(\xi = L(\epsilon)) = \pi$ (and hence
  $\tilde{u}_{\epsilon}(\xi = -L(\epsilon)) = -\pi$). Note that the
  initial condition implies that
  $\tilde{p}_{\epsilon}(L(\epsilon)) = \epsilon$.

  We consider the perturbation series
  \begin{equation}
    \tilde{u}_{\epsilon}(\xi;\epsilon) =
    \tilde{u}_{\text{het}}^{+}(\xi) + \epsilon^2 \tilde{u}_{1}(\xi)  +
    \epsilon^4 R(\xi; \epsilon), \quad \vert \xi \vert \leqslant
    L(\epsilon),
    \label{perseries}
\end{equation}
where $R(\xi; \epsilon)$ is the remainder
term. Substituting~(\ref{perseries}) into~(\ref{SGxi}) yields at first
order
\begin{equation*}
  \frac{d^2\tilde{u}_1}{d\xi^2} + \cos(\tilde{u}_{\text{het}}^{+}(\xi))\tilde{u}_{1} = 0. 
\end{equation*}
The general solution of this second order ODE is (see e.g.~\cite{Derks2007})
\begin{equation*}
  \tilde{u}_{1}(\xi) = \frac{A}{\cosh(\xi)}+B\bigg(\sinh(\xi)+\frac{\xi}{\cosh(\xi)}\bigg),
\end{equation*} 
where $A$ and $B$ are constants, which can be found with the two
initial conditions $\tilde{u}(\xi=0) = 0$ and
$\tilde{p}(\xi=0)=\sqrt{4+\epsilon^2} =
2\bigg(1+\frac{\epsilon^2}{8}+\mathcal{O}(\epsilon^4)\bigg)$, implying
$A=0$ and $B=1/8$.
	
Next we determine the translation constant $L(\epsilon)$. Since
$L(\epsilon) \to \infty$ for $\epsilon \to 0$, we consider
(\ref{perseries}) when $\xi$ is large. As both $\tilde{u}_1(\xi)$ and
$\tilde{u}_{\text{het}}(\xi)$ have $e^{\xi}$ and $e^{-\xi}$ as
fundamental building blocks, we define
$Y(\epsilon) = e^{-L(\epsilon)},$ i.e. $Y(\epsilon) \to 0$ if
$\epsilon \to 0.$ Now we can write
$\tilde{u}_{\epsilon}(\xi = -L(\epsilon)) = -\pi$ as
\begin{equation}
  4\arctan(Y) +\frac{\epsilon^2}{8}\bigg(\frac{1}{2}\bigg(Y-\frac{1}{Y}\bigg)+\frac{2\ln(Y)}{Y^{-1}+Y}\bigg) + \epsilon^4 R(-L(\epsilon);\epsilon) =0. \label{arctanY}
\end{equation}
Thus,
\begin{equation*}
  Y = \tan \bigg(\frac{\epsilon^2}{64Y}\bigg(1-Y^2-\frac{4Y^2}{1+Y^2}\ln Y \bigg)-\epsilon^4R(-L(\epsilon), \epsilon)\bigg).
\end{equation*}
Making the assumption that $\epsilon^2/Y$ and
$\epsilon^4R(-L(\epsilon), \epsilon)$ are small then we can write the
above as
\begin{equation*}
  Y^2 = \frac{\epsilon^2}{64}\bigg(1+\mathcal{O}(Y^2\ln Y)\bigg)-\epsilon^4R(-L(\epsilon), \epsilon) +\mathcal{O}\bigg(\frac{\epsilon^6}{Y^2}\bigg).
\end{equation*}
Hence, $Y = \epsilon/8 +\mathcal{O}(\epsilon^2).$ From this it is
apparent that $\epsilon^2/Y = \epsilon + \mathcal{O}(\epsilon^2)$
which validates the assumption $\epsilon^2/Y$ is small. Hence we find
\begin{equation*}
  L(\epsilon) = -\ln Y = -\ln\bigg(\frac{\epsilon}{8}\bigg)+\ln(1+\mathcal{O}(\epsilon)) = \ln\bigg(\frac{8}{\epsilon}\bigg)+\mathcal{O}(\epsilon).
\end{equation*}
	
Finally we estimate the error term $\epsilon^4R(\xi,\epsilon)$ for
$\vert \xi \vert \leqslant L(\epsilon)$ and verify the second
assumption that $\epsilon^4R(L(\epsilon),\epsilon)$ is small. To do
this we substitute $Y(\epsilon) = \epsilon/8 +\mathcal{O}(\epsilon^2)$
into~(\ref{arctanY}), which gives
\begin{equation*}
  \frac{\epsilon}{2} + \mathcal{O}(\epsilon^2) - \frac{\epsilon}{2}+\mathcal{O}(\epsilon^3\ln \epsilon) + \epsilon^4R(L(\epsilon), \epsilon) = 0,
\end{equation*}
which implies
$\epsilon^4R(L(\epsilon); \epsilon) = \mathcal{O}(\epsilon^2)$ and
hence $\epsilon^4R(\xi; \epsilon) = \mathcal{O}(\epsilon^2)$ for
$\vert \xi \vert \leqslant L(\epsilon)$. This also verifies the
assumption $\epsilon^4R(L(\epsilon),\epsilon)$ is small. Lastly, the
perturbation term
$\epsilon^2\tilde{u}_1(L(\epsilon)) = \mathcal{O}(\epsilon)$, thus
$\epsilon^2\tilde{u}_1(\xi) = \mathcal{O}(\epsilon)$ for all
$\vert \xi \vert \leqslant L(\epsilon)$.
\end{proof}

Returning to the original spatial variable, we can now use
Lemma~\ref{theorempert} to approximate front solutions to the
inhomogeneous sine-Gordon equation~(\ref{v=0sg}) when
$\Delta \gg 1$.
\begin{lemma}\label{lem.front_d>1}
 Consider the inhomogeneous sine-Gordon BVP~(\ref{v=0sg}-\ref{BCS3})
  with fixed $d>1$. For $\Delta\gg1$, the monotonic increasing stationary
  front~$u_0$ can be approximated by
\begin{equation} \label{frontapprox}
u_0(x;d,\Delta)= 
\begin{cases}
4\arctan(e^{x+x^*}),&  x \leqslant -\Delta \\
u_\Delta(x;d) ,      & -\Delta \leqslant x \leqslant 0 \\
2\pi -u_{\Delta}(-x;d),  & 0\leqslant x \leqslant \Delta \\  
4\arctan(e^{x-x^*}), & x \geqslant \Delta.
\end{cases}
\end{equation}
Here, for $-\Delta<x<0$, 
\begin{equation*}
  u_\Delta(x;d) = 4\arctan(\exp(\tilde{x})) -\pi
                + \mathcal{O}\left(\exp\left(-\Delta\sqrt{d-1}\right)\right),
                \quad\mbox{uniform for} \quad -\Delta\leqslant x\leqslant 0.
\end{equation*}
Here $\tilde{x} = \sqrt{d-1} \,(x+\Delta)+L(\Delta,d) $ where
$L(\Delta,d) = \ln\left(\tan((u_0^-+\pi)/4)\right) +
\mathcal{O}\left(\exp(-\Delta\sqrt{d-1})\right)$ and
$u_0^- = \arccos((2-d)/d)$.
Finally, $x^{*}$ is the matching constant given by
\begin{equation} \label{xstar11}
  x^{*}(\Delta,d) = \Delta+\ln \bigg(\tan\bigg(\frac{u_0^-}{4}\bigg)\bigg) +
  \mathcal{O}(\exp(-2\Delta\sqrt{d-1})).
\end{equation}
\end{lemma}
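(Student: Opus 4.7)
The plan is to split $\mathbb{R}$ into the four regions indicated in~\eqref{frontapprox}, solve explicitly in the outer regions, apply Lemma~\ref{theorempert} in the inner region, and use the matching conditions~\eqref{q>1uin} at $x=\pm\Delta$ to fix the two constants $x^*$ and $L(\Delta,d)$.

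For $|x|>\Delta$ the inhomogeneity vanishes, so $u_0$ lies on the $H_0=0$ heteroclinic and is therefore a translate of $4\arctan(e^x)$ on each outer region; the reflection symmetry $u_0(x)+u_0(-x)=2\pi$ ties the two translations into a single shift $x^*$. For $|x|<\Delta$ I apply the rescaling $\tilde{x}=\sqrt{d-1}(x+\Delta)+L$, which casts the ODE as the planar Hamiltonian system~\eqref{SGxi}. The relevant orbit has Hamiltonian value $H=\epsilon^2/2$ with $\epsilon^2=2h/(d-1)$, placing it in the regime of Lemma~\ref{theorempert} once I verify $0<\epsilon\ll 1$.

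The smallness of $\epsilon$ comes from a length-matching argument. Reparametrising so that Lemma~\ref{theorempert}'s variable $\xi$ coincides with $\tilde x$, the orbit passes through $\tilde u=u_0^-$ at some $\tilde x=\xi_1$ and through $\tilde u=\pi$ at $\tilde x=\ln(8/\epsilon)+\mathcal{O}(\epsilon)$, using the asymptotic for $L(\epsilon)$ from Lemma~\ref{theorempert}. Since the interior half-interval $-\Delta\leqslant x\leqslant 0$ has rescaled length $\sqrt{d-1}\,\Delta$, this yields $\sqrt{d-1}\,\Delta=\ln(8/\epsilon)-\xi_1+\mathcal{O}(\epsilon)$. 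The heteroclinic approximation $\tilde u_\epsilon(\xi)=4\arctan(e^\xi)-\pi+\mathcal{O}(\epsilon)$ from the same lemma gives $\xi_1=\ln\tan((u_0^-+\pi)/4)+\mathcal{O}(\epsilon)$, so solving produces $\epsilon=\mathcal{O}(\exp(-\Delta\sqrt{d-1}))$, $h=(d-1)\epsilon^2/2=\mathcal{O}(\exp(-2\Delta\sqrt{d-1}))$, and the identification $L(\Delta,d)=\xi_1$. The claimed approximation for $u_\Delta(x;d)$ on $[-\Delta,0]$, with uniform error $\mathcal{O}(\exp(-\Delta\sqrt{d-1}))$, is then the uniform $\mathcal{O}(\epsilon)$ remainder of Lemma~\ref{theorempert} rewritten in our parameters.

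Finally, substituting $u_0^-=\arccos((2-d)/d)+\mathcal{O}(h)$ from~\eqref{q>1uin} into the outer matching condition $4\arctan(e^{-\Delta+x^*})=u_0^-$ gives $x^*=\Delta+\ln\tan(u_0^-/4)+\mathcal{O}(\exp(-2\Delta\sqrt{d-1}))$; the error inherited by $L(\Delta,d)$ is the larger $\mathcal{O}(\exp(-\Delta\sqrt{d-1}))$ from Lemma~\ref{theorempert}. The symmetric expression on $0\leqslant x\leqslant\Delta$ follows directly from $u_0(x)+u_0(-x)=2\pi$. The main obstacle will be the careful bookkeeping of the two different-order exponentially small error terms, and in particular verifying that the uniform $\mathcal{O}(\epsilon)$ estimate from Lemma~\ref{theorempert} (stated on the symmetric interval $|\xi|\leqslant L(\epsilon)$) transfers uniformly to the asymmetric sub-interval $[\xi_1,L(\epsilon)]$ occupied by our front.
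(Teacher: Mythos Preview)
Your proposal is correct and follows essentially the same route as the paper: invoke Lemma~\ref{theorempert} on the rescaled inner problem, identify $h=(d-1)\epsilon^2/2$ via the Hamiltonian relation, match at $x=-\Delta$ to pin down $\epsilon$ (hence $L$) in terms of $\Delta$, and read off $x^*$ from the outer sine-Gordon piece. The paper substitutes $\xi=-\Delta\sqrt{d-1}+L(\epsilon)$ directly into~\eqref{uouter}, whereas you phrase the same equality as the length identity $\sqrt{d-1}\,\Delta=L(\epsilon)-\xi_1$; these are algebraically identical. Your caution about the asymmetric sub-interval $[\xi_1,L(\epsilon)]$ is harmless, since $\xi_1>0$ for $d>1$ places it strictly inside the symmetric interval $[-L(\epsilon),L(\epsilon)]$ on which Lemma~\ref{theorempert} already provides the uniform estimate.
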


\begin{proof}
 We use Lemma~\ref{theorempert} to approximate the front solution to the BVP~(\ref{v=0sg}-\ref{BCS3}) in the region $-\Delta <x < 0$. The solutions near the heteroclinic connection as described in
  Lemma~\ref{theorempert} go through the point $(\tilde u,\tilde
  p)=(\pi,\epsilon)$ and thus satisfy $H(\tilde u,\tilde
  p)=\epsilon^2/2$.  To link this to the Hamiltonian
  $H_1$, we note that $H_1(u,p) =
  (d-1)H\left(u,p/\sqrt{d-1}\right)$, thus in the
  $(u,p)$ coordinates
this orbit satisfies  $H_1=(d-1)\epsilon^2/2$.  Thus if the solution $\tilde
  u_\epsilon(\xi)$ corresponds to the inner part of the symmetric
  front solution $u_0$ with
  $u_0(0)=\pi$, then the matching condition~\eqref{q>1uin} gives
  \begin{equation}\label{eq:u0min}
    u_0(-\Delta) = \arccos\left(\frac{2-d}d - \frac {\epsilon^2(d-1)} {2d}    \right)
      = u_0^- +\mathcal{O}(\epsilon^2),
    \end{equation}
  where $u_0^- = \arccos((2-d)/d)$.
  Using~\eqref{uouter} with $\xi=-\Delta \sqrt{d-1}+L(\epsilon)$, this
  implies
  \[
    u_0^- +\mathcal{O}(\epsilon^2) = 
    4\arctan(\exp(-\Delta \sqrt{d-1}+L(\epsilon))) -\pi
      +\mathcal{O}(\epsilon)
    \]
Recalling that $L(\epsilon) =  \ln(
  8/\epsilon)+\mathcal{O}(\epsilon)$, this gives the
following relation between $\Delta$ and $\epsilon$
\[
\exp\left(-\Delta\sqrt{d-1}\right) = \frac\epsilon 8 \,
\tan\left(\frac{u_0^-+\pi}{4}\right) +\mathcal{O}(\epsilon^2).  
\]
In other words,
$\epsilon = 
\left(8/\tan\left(\frac{u_0^-+\pi}{4}\right)\right)\exp\left(-\Delta\sqrt{d-1}\right)
+\mathcal{O}\left(\exp\left(-2\Delta\sqrt{d-1}\right)\right)$. Substituting this
for~$\epsilon$ gives the relation for $u_\Delta(x,\Delta)$ in the Lemma. 

Since we require a continuous solution we wish to determine the unique translation constant $x^*$. This can be done by setting $x=-\Delta$ in (\ref{frontapprox}) in the region
$x\leqslant -\Delta$ and using~\eqref{eq:u0min}. Doing so one
determines (\ref{xstar11}).
\end{proof}

With the approximation (\ref{frontapprox}) of $u_0$ in the
linear operator (\ref{reducedlinear}) when $\Delta\gg1$, we can give
the following Lemma. 
\begin{lemma} \label{Delta>>0} Consider $\Delta \gg1.$ Then for fixed $d>1$ the
  linear operator (\ref{reducedlinear}) associated to the unique
  stationary front approximated by (\ref{frontapprox}) has an eigenvalue
  $\Lambda \in (-1,0)$ approximated by $\Lambda = \Lambda_0(d) + \mathcal{O}(\exp\left(-\Delta\sqrt{d-1}\right))$ where $\Lambda_0$ is determined implicitly by
  \begin{multline} \label{eigenvaluezero}
\left(\frac{\sqrt{1+\Lambda_0}}{\sqrt{d}}+\Lambda_0+\frac{1}{d}
\right)
\left(\frac{d-1}{\sqrt{d}}+\sqrt{d-1+\Lambda_0}\right)
\\ 
+\left(\frac{1}{\sqrt{d}}+\sqrt{1+\Lambda_0}\right)
\left(\frac{d-1}{\sqrt{d}}\,\sqrt{d-1+\Lambda_0}+\Lambda_0+\frac{(d-1)^2}{d}\right)=0.
 \end{multline}
\end{lemma}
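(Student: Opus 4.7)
The plan is to solve the eigenvalue problem $\mathcal{L}\Psi=\Lambda\Psi$ region by region, then impose $C^1$ matching at $x=\pm\Delta$, decay at $x=\pm\infty$, and the symmetry condition $\Psi'(0)=0$. The key inputs are Lemma~\ref{lem.front_d>1}, which controls $u_0$ uniformly in $x$ up to $\mathcal{O}(\exp(-\Delta\sqrt{d-1}))$, and the Mann-type fundamental solutions already used in Case~2. By the construction of $u_0$ in Lemma~\ref{lem.front_d>1} we have $u_0(-x)=2\pi-u_0(x)$, so $\cos u_0$ is even and the ground-state eigenfunction can be chosen even; it thus suffices to work on $x\leqslant 0$ and impose $\Psi'(0)=0$.

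In the outer region $x<-\Delta$ the front coincides with the sine-Gordon heteroclinic shifted by $x^*$, so the bounded $H^2$ solution at $-\infty$ is the Mann-type function $\Psi^{-}$ given by (\ref{Mann})--(\ref{MannDiff}). In the inner region $-\Delta<x<0$ I would rescale $\tilde{x}=\sqrt{d-1}\,(x+\Delta)+L$ as in Lemma~\ref{lem.front_d>1}: the leading-order equation becomes a sine-Gordon-type linearization in $\tilde{x}$, but with spectral parameter $\mu=\Lambda/(d-1)$, so its fundamental solutions are again Mann-type, spanned by $\Phi_{\pm}(\tilde{x})=e^{\pm\sqrt{1+\mu}\,\tilde{x}}(\tanh\tilde{x}\mp\sqrt{1+\mu})$. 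At $x=0$ one has $\tilde{x}=L+\sqrt{d-1}\,\Delta$, which is exponentially large, so the evenness condition $\Psi'(0)=0$ forces the growing mode $\Phi_{-}$ to enter with amplitude of order $\exp(-2\sqrt{d-1+\Lambda}\,\Delta)$ only. To leading order the inner eigenfunction is therefore a scalar multiple of the decaying mode $\Phi_{+}$, and matching $\Psi$ and $\Psi'$ at $x=-\Delta$ (i.e.\ $\tilde{x}=L$) between $\Psi^{-}$ and this inner $\Phi_{+}$ reduces, after cancelling amplitudes, to equality of logarithmic derivatives --- a single scalar transcendental equation for $\Lambda$.

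The algebraic step is then to evaluate this matching equation explicitly. From $u_0(-\Delta)=u_0^{-}=\arccos((2-d)/d)$ and the half-angle identities I get $\cos(u_0^-/2)=1/\sqrt{d}$ and $\sin(u_0^-/2)=\sqrt{(d-1)/d}$, and together with the expressions for $x^*$ and $L$ from Lemma~\ref{lem.front_d>1} these yield
\begin{equation*}
\tanh(-\Delta+x^*)=-\tfrac{1}{\sqrt{d}},\quad \sech^2(-\Delta+x^*)=\tfrac{d-1}{d},\quad \tanh L=\sqrt{\tfrac{d-1}{d}},\quad \sech^2 L=\tfrac{1}{d},
\end{equation*}
valid up to $\mathcal{O}(\exp(-\Delta\sqrt{d-1}))$. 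Substituting into the logarithmic-derivative identity, absorbing the chain-rule factor via $\sqrt{d-1}\sqrt{1+\mu}=\sqrt{d-1+\Lambda}$, and clearing denominators produces exactly (\ref{eigenvaluezero}) with $\Lambda_0$ in place of $\Lambda$.

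To upgrade this leading-order identity to $\Lambda=\Lambda_0(d)+\mathcal{O}(\exp(-\Delta\sqrt{d-1}))$, I would apply the implicit function theorem to the full matching equation --- keeping the exponentially small $\Phi_{-}$ contribution and the exponentially small error in $u_0$ --- around a simple root of (\ref{eigenvaluezero}) in $(-1,0)$. I expect the main obstacle to be the bookkeeping of these error terms: one must verify that the front perturbation from Lemma~\ref{lem.front_d>1} produces only $\mathcal{O}(\exp(-\Delta\sqrt{d-1}))$ corrections to the coefficient $\cos u_0$ uniformly on $[-\Delta,0]$, and that this, together with the forced $\Phi_{-}$ contribution, propagates to an error of the same exponential order in the final scalar equation for $\Lambda$.
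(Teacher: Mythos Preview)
Your proposal is correct and follows essentially the same route as the paper: Mann-type fundamental solutions in both the outer and rescaled inner regions, selection of the inner mode that decays toward $x=0$ via the evenness condition, and $C^1$ matching at $x=-\Delta$ to obtain the implicit relation~(\ref{eigenvaluezero}), with the $\mathcal{O}(e^{-\Delta\sqrt{d-1}})$ errors coming from Lemma~\ref{lem.front_d>1} and the suppressed second inner mode. Your treatment is in fact slightly more detailed than the paper's in two places---the explicit half-angle computations $\tanh(-\Delta+x^*)=-1/\sqrt d$, $\tanh L=\sqrt{(d-1)/d}$, and the invocation of the implicit function theorem for the error bound---though note a small labeling slip: with your formula $\Phi_{\pm}=e^{\pm\sqrt{1+\mu}\,\tilde x}(\tanh\tilde x\mp\sqrt{1+\mu})$ it is $\Phi_{-}$ that decays as $\tilde x\to+\infty$ and should be the retained mode.
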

\begin{proof}
  Consider $\Delta\gg 1$ and fix $d>1$. We call $\Lambda$ an eigenvalue of
  $ \mathcal{L}$ if there exist an eigenfunction $\Psi \in H^2(\mathbb{R})$ such
  that $\mathcal{L}\Psi = \Lambda\Psi,$ i.e.
  \begin{subequations}
    \begin{align}
      [D_{xx}-\cos(u_{0})]\Psi &= \Lambda\Psi, \qquad \vert x \vert >\Delta \label{op1}
      \\
      [D_{xx}-(1-d)\cos(u_{0}))]\Psi &= \Lambda\Psi, \qquad \vert x
                                       \vert <\Delta. \label{op2} 
    \end{align}
  \end{subequations}
  Recall that the decaying solution as $x\to -\infty$ and its
  derivative for (\ref{op1}) in the region $x<-\Delta$ are given by (\ref{Mann}) and
  (\ref{MannDiff}) respectively.  Similarly to $\vert x \vert>\Delta$,
  the results in \cite{Mann1997} for the region $-\Delta<x<0$
  which give the two linearly independent solutions to the
  ODE~\eqref{op2} as,
    \begin{equation*}
  	\Psi^1(x)
  	=
        e^{-\xi\sqrt{1+\tilde{\Lambda}}}(\tanh(\xi)+\sqrt{1+\tilde{\Lambda}})
        \quad \text{and} \quad
  	\Psi^2(x) 
  	= e^{\xi\sqrt{1+\tilde{\Lambda}}}(\tanh(\xi)-\sqrt{1+\tilde{\Lambda}})
  \end{equation*}
  where $\xi = \sqrt{d-1}(x+\Delta)+L(\Delta,d)$ and
  $\tilde{\Lambda}=\Lambda/(d-1)$, for the leading order
  problem. Recall $\Psi(x)$ is an even function hence its derivative
  at $x=0$ must be zero. Since $\Delta\gg1$, $\Psi^1(x)$ vanishes as
  $x \to 0$, whilst $\Psi^2(x)$ grows exponentially. Thus we consider
  $\Psi(x) = A(\Psi^1(x)+\mathcal{O}(\exp(-\sqrt{d-1}\Delta))$ in the
  region $-\Delta<x<0.$ To find the matching constant $A,$ we set
  $\Psi^-(x=-\Delta) = \Psi(x=-\Delta)$ which yields
   \begin{equation*}
   A = -\frac{\left(\tan\left(\frac{u_0^-}{4}\right)\right)^{\sqrt{1+\Lambda}}\left(\frac{1}{\sqrt{d}}+\sqrt{1+\Lambda}\right)}{\left(\tan\left(\frac{u_0^-+\pi}{4}\right)\right)^{-\sqrt{1+\tilde{\Lambda}}}\left(\sqrt{\frac{d-1}{d}}+\sqrt{1+\tilde{\Lambda}}\right)} +\mathcal{O}\left(e^{-\sqrt{d-1}\Delta}\right).
   \end{equation*}
   Since we require a continuously differentiable $\Psi \in H^2(\mathbb{R}),$ we determine the eigenvalue $\Lambda$ by matching the derivatives at $x=-\Delta.$ This yields,
   \begin{multline*}
   \left(\frac{\sqrt{1+\Lambda}}{\sqrt{d}}+\Lambda+\frac{1}{d} \right) \left(\frac{1}{\sqrt{d}}+\sqrt{\frac{1}{d-1}\left(1+\frac{\Lambda}{d-1}\right)}\right) \\
   +\left(\frac{1}{\sqrt{d}}+\sqrt{1+\Lambda}\right)\left(\sqrt{1+\frac{\Lambda}{d-1}}\sqrt{\frac{d-1}{d}}+\frac{\Lambda}{d-1}+\frac{d-1}{d}\right)=\mathcal{O}\left(e^{-\sqrt{d-1}\Delta}\right).
   \end{multline*}
Multiplying both sides through by $(d-1)$ gives the expression in the Lemma.
\end{proof}
For fixed values of $d>1$ and $\Delta\gg1$, Lemma \ref{Delta>>0}
implies that $\tilde{\mathcal{L}}_{\alpha}$ has an eigenvalue zero at
$\alpha = -\Lambda/2$. Therefore substituting $\Lambda_0=-2\alpha$ into
(\ref{eigenvaluezero}) yields (\ref{BifDeltaLarge}) which completes
the third part of the proof.

\subsection*{Case 3(b): \boldmath{$\Delta \gg1$, $0<d<1$}}
When $\Delta\gg1$ and $0<d<1$,
front solution will be close to the heteroclinic solution of the stationary
sine-Gordon equation connecting $(0,0)$ with $(2\pi,0)$.
\begin{lemma}\label{lem.front_d<1}
  Consider the inhomogeneous sine-Gordon BVP~(\ref{v=0sg}-\ref{BCS3})
  with fixed $0<d<1$. For $\Delta\gg1$, the monotonic increasing stationary
  front~$u_0$ can be approximated by
\begin{equation} \label{frontapprox2}
u_0(x;d,\Delta)= 
\begin{cases}
4\arctan(e^{x+x^*}),&  x \leqslant -\Delta \\
u_\Delta(x;d) ,      & -\Delta \leqslant x \leqslant \Delta \\ 
4\arctan(e^{x-x^*}), & x \geqslant \Delta.
\end{cases}
\end{equation}
Here, with $\xi = x\,\sqrt{1-d}$, the function
$u_\Delta(x;d)$ satisfies the following estimate, uniform for $|x|\leq\Delta$,
\begin{equation*}
  u_\Delta(x;d) = 4\arctan(e^{\xi})
  + \epsilon ^2
  \left(\frac{\xi}{\cosh \xi}+\sinh \xi\right)
                + \mathcal{O}\left(e^{-2\Delta\sqrt{1-d}}\right),
              \end{equation*}
              where
              $\epsilon = \frac{8\left(1-\sqrt{1-d}\right)}{\sqrt
                d}\,e^{-\Delta\sqrt{1-d}} +
              \mathcal{O}\left(e^{-2\Delta\sqrt{1-d}}\right)$. The
              matching constant $x^{*}$ is given by
\[
x^* =   \Delta\left(1-\sqrt{1-d}\right) +
\ln\left(\frac{2\sqrt{1-d}\,\left(1-\sqrt{1-d}\right)}{d}\right) 
  + \mathcal{O}\left(e^{-\Delta\sqrt{1-d}}\right).
\]
\end{lemma}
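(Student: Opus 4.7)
The plan is to mirror the strategy used for Lemma~\ref{lem.front_d>1}: first rescale the inner equation to a standard sine-Gordon form, then build a perturbation expansion around the appropriate heteroclinic connection, and finally match to the outer heteroclinic at $x=-\Delta$ to fix both the small parameter and the translation constant $x^*$. For $0<d<1$ the natural scaling is $\xi=x\sqrt{1-d}$, which converts the inner ODE $u_{xx}=(1-d)\sin u$ into $u_{\xi\xi}=\sin u$ without any accompanying $\pi$-shift (the sign of $1-d$ is now positive, in contrast to Case~3(a)). The relevant separatrix is therefore the standard sine-Gordon heteroclinic $u_{\rm het}(\xi)=4\arctan(e^\xi)$ joining the saddles $(0,0)$ and $(2\pi,0)$ through the centre $(\pi,2)$ at $\xi=0$.

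The second step is to invoke Lemma~\ref{theorempert}: its proof, up to the trivial translation $\tilde u=u-\pi$, is exactly the variational analysis of $u_{\xi\xi}=\sin u$ near $u_{\rm het}$. Thus the orbit with $u(0)=\pi$ and $u_\xi(0)=\sqrt{4+\epsilon^2}$ admits the expansion $u_\epsilon(\xi)=4\arctan(e^\xi)+\tfrac{\epsilon^2}{8}(\sinh\xi+\xi/\cosh\xi)+\epsilon^4 R(\xi;\epsilon)$ uniformly on $|\xi|\leqslant L(\epsilon)=\ln(8/\epsilon)+\mathcal O(\epsilon)$, with $u_\xi=\epsilon$ at the saddles. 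To link $\epsilon$ to $\Delta$, the relation $H_1=(1-d)(\tilde H+1)$ forces the inner-orbit Hamiltonian to be $h=2(1-d)+(1-d)\epsilon^2/2$, and~\eqref{q>1uin} then yields $u^-=\arccos\bigl(1-(1-d)\epsilon^2/(2d)\bigr)=\epsilon\sqrt{(1-d)/d}+\mathcal O(\epsilon^3)$. Writing $Y=e^{-\Delta\sqrt{1-d}}$ and inserting the asymptotics $4\arctan(Y)=4Y+\mathcal O(Y^3)$, $\sinh(-\Delta\sqrt{1-d})=-1/(2Y)+\mathcal O(Y)$, and $(-\Delta\sqrt{1-d})/\cosh(-\Delta\sqrt{1-d})=\mathcal O(Y|\ln Y|)$ into the matching condition $u_\epsilon(-\Delta\sqrt{1-d})=u^-$ produces
\[
4Y-\frac{\epsilon^2}{16Y}=\epsilon\sqrt{(1-d)/d}+\mathcal O(Y^2,\epsilon^3).
\]

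The key difference from Case~3(a), and the main obstacle of the argument, is that here $u^-$ is $\mathcal O(\epsilon)$ rather than $\mathcal O(1)$, so the nominally subleading contribution $-\epsilon^2/(16Y)$ in the expansion of $u_\epsilon$ balances the leading $4Y$. The matching equation is therefore a genuine quadratic in $\epsilon$,
\[
\epsilon^2+16Y\sqrt{(1-d)/d}\,\epsilon-64Y^2=\mathcal O(Y^3),
\]
whose positive root is $\epsilon=8Y(1-\sqrt{1-d})/\sqrt d+\mathcal O(Y^2)$, exactly the stated relation. Finally, $x^*$ comes from continuity of the outer profile $u_0(x)=4\arctan(e^{x+x^*})$ at $x=-\Delta$: $u_0(-\Delta)=u^-$ gives $x^*=\Delta+\ln\tan(u^-/4)$, and substituting the expressions for $u^-$ and $\epsilon$ and expanding the logarithm collapses the algebra to the claimed $x^*=\Delta(1-\sqrt{1-d})+\ln\bigl(2\sqrt{1-d}(1-\sqrt{1-d})/d\bigr)+\mathcal O(e^{-\Delta\sqrt{1-d}})$. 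The error bookkeeping — in particular, promoting $\epsilon^4 R$ into an $\mathcal O(e^{-2\Delta\sqrt{1-d}})$ remainder for $u_\Delta$ — is parallel to the $d>1$ proof.
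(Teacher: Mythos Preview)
Your proposal is correct and follows essentially the same route as the paper: rescale by $\xi=x\sqrt{1-d}$, invoke Lemma~\ref{theorempert} (via the shift $\tilde u=u-\pi$) to expand the inner orbit about the heteroclinic, and match at $x=-\Delta$ using~\eqref{q>1uin} to determine first $\epsilon$ and then $x^*$. The paper's proof is terser at the key step---it obtains the scaling $\epsilon=\mathcal O(e^{-\Delta\sqrt{1-d}})$ by taking cosines and then says only that ``an expansion in $\eta=e^{-\Delta\sqrt{1-d}}$ gives the relation for $\epsilon$''---whereas you make explicit what this expansion entails: because $u^-=\mathcal O(\epsilon)$ rather than $\mathcal O(1)$, the $\epsilon^2/(16Y)$ contribution from the $\sinh$ correction balances the leading $4Y$, producing a genuine quadratic in $\epsilon$ whose positive root is the stated coefficient $8(1-\sqrt{1-d})/\sqrt d$. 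This is a useful clarification of exactly the point the paper elides.
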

\begin{proof}
  The proof uses the similar ideas as in the proof of
  Lemma~\ref{lem.front_d>1}. Similarly we wish to approximate the front solution in the region $\vert x \vert <\Delta$. First we note that the scaling
  $\xi=x\,\sqrt{1-d}$, $\tilde u(\xi) = u(\xi/\sqrt{1-d})$
  in~\eqref{v=0sg} in the region $\vert x \vert < \Delta$, then applying the shift transformation $\tilde u(\xi) \to \tilde{u}(\xi)+\pi$
  leads to the wave equation considered in
  Lemma~\ref{theorempert}. Hence this Lemma gives an estimate for the
  solution $\tilde u_\epsilon$ which satisfies the initial condition
  $(\tilde{u}_{\epsilon}, \tilde{p}_{\epsilon})=(\pi,\sqrt{4+\epsilon^2})$.  Therefore in the original coordinates, this Lemma gives an estimate for a
  solution $(u_\epsilon(x), p_{\epsilon}(x))$, going through the
  point~$(u_\epsilon, p_{\epsilon}) = (\pi,\sqrt{4+\epsilon^2}\sqrt{1-d})$. This implies that
  the value of $H_1$ is $h=(1-d)(4+\epsilon^2)/2$, thus at the
  matching point $x=-\Delta$, we have
  \begin{equation}\label{eq:boundary_less1}
    u_0(-\Delta;d,\Delta) = \arccos\left(1-\frac{\epsilon^2(1-d)}{2d}\right)
    \quad \text{and} \quad
    p_0(-\Delta;d, \Delta) = \epsilon \,\sqrt{\frac{1-d}{d}}.
  \end{equation}
  First we use these equalities to determine the relation between
  $\epsilon$ and $\Delta$. Approaching the boundary $x=-\Delta$ from
  the right, we get that
  $u_\Delta(-\Delta;d)=\arccos\left(1-\frac{\epsilon^2(1-d)}{2d}\right)$.
  For $\Delta$ large, $\sinh(-\Delta\,\sqrt{1-d})$ is unbounded, hence
  by taking the cosine of both sides, we obtain that
  $\epsilon^2\sinh(-\Delta\,\sqrt{1-d}) = \mathcal{O}(\epsilon)$,
  i.e. $\epsilon =\mathcal{O}(e^{-\Delta\sqrt{1-d}})$. An
  expansion in $\eta = e^{-\Delta\sqrt{1-d}}$ gives the relation for
  $\epsilon$ in the Lemma.

  Next we use the equations~\eqref{eq:boundary_less1} to determine the
  shift $x^*$. Approaching the boundary at $x=-\Delta$ from the left, we get that
  $4\arctan\left(e^{-\Delta+x^*}\right) =
    \arccos\left(1-\frac{\epsilon^2(1-d)}{2d}\right)$. Expanding about
    $\epsilon =0$ gives $e^{-\Delta+x^*} = \frac\epsilon 4
  \,\sqrt{\frac{1-d}{d}} (1+\mathcal{O}(\epsilon^2))$, which together
  with the expression above for $\epsilon$ leads to
  the expression for $x^*$  in the Lemma.  
\end{proof}

With this approximation for $u_0$ in the linear
operator~(\ref{reducedlinear}) when $\Delta\gg1$, we can give the
following Lemma.
\begin{lemma}\label{lem.Lambda_d<1}
  Consider $\Delta \gg1$. Then for fixed $0<d<1$ the linear operator
  $\mathcal{L}$, defined in~(\ref{reducedlinear}) and associated to the
  stationary front $u_0(x;d,\Delta)$ approximated by (\ref{frontapprox2}), has an
  eigenvalue $\Lambda \in (-1,0)$ {which satisfies $\Lambda = o(1)$,
    for $\Delta \to \infty$.}
\end{lemma}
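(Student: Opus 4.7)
The plan is to adapt the matching strategy of Cases~2 and~3(a) in this section: construct the candidate eigenfunction piecewise in the outer ($|x|>\Delta$) and inner ($|x|<\Delta$) regions and match value and derivative at $x=-\Delta$. Since $u_0(-x) = 2\pi - u_0(x)$ the potential $\cos(u_0)$ is even, so $\mathcal{L}$ preserves parity and, by one-dimensional Sturm--Liouville theory, its ground state is even. We therefore look for an even eigenfunction $\Psi \in H^2(\mathbb{R})$ of $\mathcal{L}\Psi = \Lambda\Psi$ with $\Lambda$ close to $0$.

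In the outer region $x<-\Delta$ the front $u_0 = 4\arctan(e^{x+x^*})$ is exact, so the eigenvalue equation is precisely the P\"oschl--Teller equation from~\cite{Mann1997} and the unique $H^2$-decaying solution is $\Psi^-$ given in~(\ref{Mann}). This yields $\Psi^-_x/\Psi^-|_{x=-\Delta} = \sqrt{1+\Lambda}$ up to corrections of order $e^{-2\Delta\sqrt{1-d}}$, since $x^*-\Delta = -\Delta\sqrt{1-d}+\mathcal{O}(1)$ by Lemma~\ref{lem.front_d<1}. In the inner region the same lemma gives $u_\Delta \approx 4\arctan(e^\xi)$ with $\xi = x\sqrt{1-d}$; under this rescaling $\mathcal{L}\Psi = \Lambda\Psi$ becomes the P\"oschl--Teller equation $\Psi_{\xi\xi}+2\sech^2(\xi)\Psi = (1+\tilde\Lambda)\Psi$ with $\tilde\Lambda = \Lambda/(1-d)$, whose even solutions are spanned by
\[
\Phi_e(\xi) = \tilde\mu\cosh(\tilde\mu\xi) - \tanh(\xi)\sinh(\tilde\mu\xi),
\qquad \tilde\mu = \sqrt{1+\Lambda/(1-d)},
\]
so we set $\Psi_{\rm in}(x) = A\,\Phi_e(x\sqrt{1-d})$.

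Matching value and derivative at $x=-\Delta$ reduces to the scalar equation
\[
\sqrt{1-d}\;\frac{\Phi_e'(-\Delta\sqrt{1-d})}{\Phi_e(-\Delta\sqrt{1-d})} \;=\; \sqrt{1+\Lambda}.
\]
At $\Lambda = 0$ (hence $\tilde\mu=1$) a direct calculation yields $\Phi_e(-y) = \sech(y)$ and $\Phi_e'(-y) = \tanh(y)\sech(y)$ with $y = \Delta\sqrt{1-d}$, so the left-hand side tends to $\sqrt{1-d}<1$ while the right-hand side equals $1$; the mismatch is $1-\sqrt{1-d} = \mathcal{O}(1)$ with a definite sign. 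On the other hand $\partial_{\tilde\mu}\Phi_e|_{\tilde\mu=1,\xi=-y} = \cosh(y)$ grows like $e^{\Delta\sqrt{1-d}}$, so an $\mathcal{O}(e^{-2\Delta\sqrt{1-d}})$ change in $\Lambda$ produces an $\mathcal{O}(1)$ change in the ratio. Balancing yields a solution $\Lambda(\Delta) = C(d)\,e^{-2\Delta\sqrt{1-d}}(1+o(1))$ with $C(d) = 4(1-d)(\sqrt{1-d}-1)/(\sqrt{1-d}+1) < 0$; in particular $\Lambda \in (-1,0)$ for $\Delta$ large and $\Lambda = o(1)$.

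The main obstacle is that the candidate eigenvalue is of smaller order than the $\mathcal{O}(e^{-\Delta\sqrt{1-d}})$ error in the approximation of $u_0$ itself, so a formal matching alone cannot separate the true eigenvalue from the approximation error. To close the argument rigorously we would invoke standard analytic perturbation theory for self-adjoint Sturm--Liouville operators: the $\mathcal{O}(e^{-\Delta\sqrt{1-d}})$ perturbation of the potential $\cos(u_0)$ shifts each isolated eigenvalue by at most $\mathcal{O}(e^{-\Delta\sqrt{1-d}}) = o(1)$, which is enough for the qualitative statement $\Lambda = o(1)$. Pinning down the sharp coefficient $C(d)$ would require computing the next correction in the expansion of $u_0$, but this is not needed for the lemma as stated.
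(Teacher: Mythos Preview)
Your argument is correct and follows essentially the same matching-at-$x=-\Delta$ strategy as the paper: build an even eigenfunction from the P\"oschl--Teller solutions inside and outside, equate logarithmic derivatives, and observe that the resulting leading-order eigenvalue is exponentially small in $\Delta$; then note that the $\mathcal{O}(e^{-\Delta\sqrt{1-d}})$ error in the approximation of $u_0$ perturbs the potential and hence the eigenvalue by at most $o(1)$, which is all the lemma claims.

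The one genuine methodological difference is in the treatment of the outer region. The paper replaces $u_0$ by the single global approximation $4\arctan(e^{x\sqrt{1-d}})$ on all of $\mathbb{R}$; for $|x|>\Delta$ this is \emph{not} the exact front, so the outer ODE is no longer the standard P\"oschl--Teller equation and the paper is forced to invoke generalised hypergeometric functions, using only the qualitative fact that the decaying solution has the form $e^{\sqrt{1+\Lambda}\,x}\phi(x)$ with $\phi,\phi'$ bounded. You instead keep the exact sine-Gordon front for $|x|>\Delta$ and therefore get the explicit Mann solution $\Psi^-$ of~(\ref{Mann}) with the clean limit $\Psi^-_x/\Psi^-|_{x=-\Delta}=\sqrt{1+\Lambda}+\mathcal{O}(e^{-2\Delta\sqrt{1-d}})$. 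This is a tidier route and even yields a candidate coefficient $C(d)$. As you correctly flag, however, that coefficient is below the approximation error in the inner region and so cannot be justified without the next correction to $u_\Delta$; the paper makes exactly the same concession. For the statement $\Lambda=o(1)$ both arguments are complete.
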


\begin{proof}
  Consider $\Delta\gg 1$ and fix $0<d<1$. For $\Lambda$ to be an eigenvalue of
  $ \mathcal{L}$ there has to exist an eigenfunction $\Psi \in H^2(\mathbb{R})$ such
  that $\mathcal{L}\Psi = \Lambda\Psi,$ i.e.
  \begin{subequations}
    \begin{align}
      [D_{xx}-\cos(u_{0})]\Psi &= \Lambda\Psi, \qquad \vert x \vert >\Delta \label{op1a}
      \\
      [D_{xx}-(1-d)\cos(u_{0}))]\Psi &= \Lambda\Psi, \qquad \vert x
                                       \vert <\Delta. \label{op2a} 
    \end{align}
  \end{subequations}
  The estimate for $x^*$ in Lemma~\ref{lem.front_d<1} gives that for $|x|>\Delta$
  \[
    u_0(x;d,\Delta) \leq
    4\arctan\left(e^{-\Delta\sqrt{1-d}}\,\frac{2\sqrt{1-d}(1-\sqrt{1-d})}{d}+
       \mathcal{O}\left(e^{-2\Delta\sqrt{1-d}}\right)    \right) =
     \mathcal{O}\left( e^{- \Delta\sqrt{1-d}} \right) .
  \]
  Thus for all $x\in\mathbb{R}$, the front is approximated by 
  $u_0(x;\Delta,d) = 4\arctan\left(e^{x\sqrt{1-d}}\right) +
  \mathcal{O}\left(e^{- \Delta\sqrt{1-d}}\right)$ and we can conclude
  that the leading order eigenvalue problem
  is~\eqref{op1a}--\eqref{op2a} with $u_0$ replaced by
  $4\arctan\left(e^{x\sqrt{1-d}}\right)$. The solutions of the inner
  second order ODE~\eqref{op2a} with the $u_0$ replacement are spanned
  by (see~\cite{Mann1997})
  \begin{equation*}
    \Psi^1(x)
    =
    e^{-\xi\sqrt{1+\tilde{\Lambda}}}(\tanh(\xi)+\sqrt{1+\tilde{\Lambda}})
    \quad \text{and} \quad 
    \Psi^2(x) 
    = e^{\xi\sqrt{1+\tilde{\Lambda}}}(\tanh(\xi)-\sqrt{1+\tilde{\Lambda}})
  \end{equation*}
  where $\xi = x\,\sqrt{1-d}$ and
  $\tilde\Lambda = \Lambda/(1-d)$. 
As before, the eigenfunction $\Psi(x)$ is an even function , hence
$\Psi(x) = A(\Psi^1(x)-\Psi^2(x))$ for $|x|<\Delta$.

The solutions of the outer  second order ODE~\eqref{op1a} with the
$u_0$ replacement  can be expressed in generalised hypergeometric
functions. For this proof, it is sufficient to note that the
exponentially decaying solution can be written as
\[
  \Psi^-(x) = e^{\sqrt{\Lambda+1}x}\, \phi(x),
\]
where $\phi(x)$ and its derivative $\phi'(x)$ are uniformly bounded
functions for $x\in(-\infty,0]$.

Matching the values of $\Psi(x)$ and $\Psi^-(x)$ at $x=-\Delta$ gives
$A=\frac{\Psi^-(-\Delta)}{\Psi^1(-\Delta)-\Psi^2(-\Delta)}$.  Matching
the values of the derivatives of $\Psi(x)$ and $\Psi^-(x)$ at $x=-\Delta$
gives 
 \[
   (\Psi^1(-\Delta)-\Psi^2(-\Delta)) \, \left.\frac{d}{dx}\Psi^-(x)\right|_{x=-\Delta}=
\Psi^-(-\Delta) \,\left.\frac{d}{dx}\left(\Psi^1(x)-\Psi^2(x)\right)\right|_{x=-\Delta}.
\]
Using the explicit expressions, this gives
\[
  -e^{ \Delta\sqrt{1-d+\Lambda}} \, \Lambda\,
  \frac{\left(\left(1+\sqrt{1-d}\right)\phi(x)+\phi'(x)\right)}{2 (1-d)}
  +\mathcal{O}\left(e^{- \Delta(2\sqrt{1-d}-\sqrt{1-d+\Lambda})}+e^{-\Delta\sqrt{1-d+\Lambda}}\right) = 0.
\]
Thus for the eigenvalue problem~\eqref{op1a}--\eqref{op2a} with $u_0$
replaced by $4\arctan\left(e^{x\sqrt{1-d}}\right)$, we obtain an
eigenvalue
$\Lambda = \mathcal{O}\left(\Delta\,e^{-2\Delta\sqrt{1-d}}\right)$.
As
$u_0=4\arctan\left(e^{x\sqrt{1-d}}\right) +
\mathcal{O}\left(e^{-\Delta\sqrt{1-d}}\right)$, this implies that in
the original eigenvalue problem the eigenvalue $\Lambda$ is small for
$\Delta$ large. To get the approximation for $\Lambda$ in the original
problem, the correction terms to the approximation
$u_0(x;\Delta,d) = 4\arctan\left(e^{x\sqrt{1-d}}\right)$ 
have to be included. Details of this go beyond what is needed for
the proof of the Lemma.
%
%
\end{proof}

For fixed values of $0<d<1$ and $\Delta\gg1$, Lemma \ref{lem.Lambda_d<1}
implies that $\tilde{\mathcal{L}}_{\alpha}$ has an eigenvalue zero at
$\alpha = -\Lambda/2\gd{=o(1)}$, which completes the final part of the proof of Theorem \ref{bifanalysis2}.
\end{section}


\begin{section}{Bifurcation curve analysis when \boldmath{$d=1$}}
  \label{coupledsys}
The existence of an eigenvalue zero in the linearisation about a
solution is a necessary, but not sufficient, condition for the existence of a bifurcation. In this section, we will take $d=1$ and prove analytically the existence of a
pitchfork bifurcation in the system
(\ref{alphasys2}-\ref{BCS2}). Further we derive
approximations for the bifurcation branches and emerging solutions. 
\begin{theorem} \label{existence} Fix $d=1$ and $\Delta>0$. Then at
  $\alpha=\alpha^*$, as given in~(\ref{alphabif}), the
  system~(\ref{alphasys2}-\ref{BCS2}) undergoes a pitchfork
  bifurcation from the solution $(u,v)=(u_0,0)$ where $u_0$ is given
  by (\ref{q1front}). To be explicit, writing
  $\alpha = \alpha^*+\epsilon^2,$ there is an $\epsilon_0 > 0$ such
  that for all $\vert \epsilon \vert < \epsilon_0,$ there exists a
  unique branch $(u(\epsilon),v(\epsilon))$ such that
  $(u(\epsilon),v(\epsilon))$ are stationary solutions
  of~(\ref{alphasys2}-\ref{BCS2}) and
  \begin{align*}
    \begin{split}
      u(\epsilon) &= u_0 +\mathcal{O}(\epsilon^2),
      \\
      v(\epsilon) &= \epsilon c \Psi+\mathcal{O}(\epsilon^2) .
    \end{split}
  \end{align*}
Here the constant
\begin{equation} \label{c}
		c = \pm\bigg(\frac{4}{3}\alpha^*\int_{0}^{\infty} \Psi^4 dx - \int_{\Delta}^{\infty}V_{21}\Psi^2\sin(u_0)+\frac{\Psi^4}{6}\cos(u_0)dx\bigg)^{-\frac{1}{2}}
\end{equation}	
with $V_{21}$ given in the appendix~\ref{appA}. Finally, $\Psi$ is given by
\begin{equation} \ \frac{\Psi }{R}=
  \begin{cases}
    \exp({\sqrt{1-2\alpha^*}(x+x^{*})})(\tanh(x+x^{*})-\sqrt{1-2\alpha^*}),&  x < -\Delta \\
    A\cos(\sqrt{2\alpha^*}x) ,     &  |x| < \Delta \\
    -\exp({-\sqrt{1-2\alpha^*}(x-x^{*})})(\tanh(x-x^{*})+\sqrt{1-2\alpha^*}),
    & x> \Delta
  \end{cases}
\end{equation}
where $A$ is a matching constant and $R$ is a rescaling constant such
that $||\Psi||_{L^2}^{2} = 1$. Both $A$ and $R$ are given in
Appendix~\ref{appB}.
\end{theorem}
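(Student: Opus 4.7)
The plan is to apply Lyapunov--Schmidt reduction to $\mathbf{F}$ at the point $(\mathbf{0}, \alpha^*)$. By the Lemma in Case~1, $\tilde{\mathcal{L}}_{\alpha^*}$ is self-adjoint with simple kernel spanned by $\varphi_0 = (0,\Psi)$ and continuous spectrum contained in $(-\infty, -1+2\alpha^*]\subset(-\infty,0)$; hence it is Fredholm of index zero and we have the orthogonal splitting $L^2(\mathbb{R})\times L^2(\mathbb{R}) = \mathrm{span}(\varphi_0)\oplus\mathrm{Range}(\tilde{\mathcal{L}}_{\alpha^*})$, with associated $L^2$-projection~$P$. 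Write $\tilde{\mathbf{w}} = s\varphi_0 + \zeta$ with $\zeta\in\mathrm{Range}(\tilde{\mathcal{L}}_{\alpha^*})$. The implicit function theorem applied to $(I-P)\mathbf{F}(s\varphi_0+\zeta;\alpha)=0$, whose linearisation in $\zeta$ at the base point is the invertible $(I-P)\tilde{\mathcal{L}}_{\alpha^*}|_{\mathrm{Range}}$, yields a unique smooth $\zeta = \zeta(s,\alpha)$ with $\zeta(0,\alpha)=0$, and reduces the bifurcation problem to the scalar equation $G(s,\alpha) := \langle\varphi_0,\mathbf{F}(s\varphi_0+\zeta(s,\alpha);\alpha)\rangle_{L^2} = 0$.

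The reflection $(\tilde u,\tilde v)\mapsto(\tilde u,-\tilde v)$ sends $F_1\mapsto F_1$, $F_2\mapsto -F_2$ and $\varphi_0\mapsto -\varphi_0$, so by uniqueness in the IFT $\zeta_u$ is even in $s$, $\zeta_v$ is odd in $s$, and $G$ is odd in $s$. Writing $\zeta_u = s^2 u_2(x) + O(s^4)$, $\zeta_v = O(s^3)$, the $O(s^2)$ balance in the first component of $\mathbf{F}=0$ gives
\begin{equation*}
\mathcal{L} u_2 \;=\; -\tfrac{1}{2}(1-\rho_0)\sin(u_0)\,\Psi^2,
\end{equation*}
which has a unique $H^2$-solution because $\mathcal{L}$ has no zero eigenvalue at~$\alpha^*$ (its largest eigenvalue is $-2\alpha^*<0$ by Case~1). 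When $d=1$ the right-hand side vanishes for $|x|<\Delta$, so $u_2$ is linear there and is obtained by solving the inhomogeneous Schr\"odinger problem on each outer region by variation of parameters using the fundamental solutions (\ref{eigen1})--(\ref{eigen2}), then matching $u_2$ and $u_{2,x}$ at $x=\pm\Delta$.

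Splitting $\mathbf{F} = \tilde{\mathcal{L}}_\alpha\tilde{\mathbf w} + \mathbf{N}(\tilde{\mathbf w};\alpha)$, the linear part contributes $\langle\varphi_0,\tilde{\mathcal{L}}_\alpha s\varphi_0\rangle = 2s(\alpha-\alpha^*)$ (using $\|\Psi\|_{L^2}^2=1$), while $\langle\varphi_0,\tilde{\mathcal{L}}_\alpha\zeta\rangle$ vanishes by self-adjointness together with $\zeta_v\perp\Psi$. The second component of $\mathbf{N}$ Taylor-expands to
\begin{equation*}
\mathbf{N}_2 \;=\; (1-\rho_0)\left[\tilde u\tilde v\sin u_0 + \tfrac{\tilde v^3}{6}\cos u_0\right] - \tfrac{4\alpha}{3}\tilde v^3 + \text{h.o.t.},
\end{equation*}
and substituting the expansions yields
\begin{equation*}
G(s,\alpha) \;=\; 2s(\alpha-\alpha^*) + s^3\kappa + O\!\left(s^5,\, s^3(\alpha-\alpha^*)\right),
\end{equation*}
with $\kappa = \int_\mathbb{R}\big[(1-\rho_0)(u_2\Psi^2\sin u_0 + \tfrac{\Psi^4}{6}\cos u_0) - \tfrac{4\alpha^*}{3}\Psi^4\big]\,dx$. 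Setting $\alpha = \alpha^*+\epsilon^2$, $s = \epsilon c$ and dividing $G$ by $\epsilon^3$, a further application of the IFT in $(c,\epsilon)$ gives $c^2 = -2/\kappa + O(\epsilon^2)$. Using the parities $\Psi$ even, $u_2$ odd, $\sin u_0$ odd and $\cos u_0$ even (from $u_0(x)+u_0(-x)=2\pi$) together with $1-\rho_0=0$ on $|x|<\Delta$ collapses the integrals to $[0,\infty)$ and $[\Delta,\infty)$, reproducing the stated formula~(\ref{c}) upon identifying $V_{21}$ from Appendix~\ref{appA} with the function $u_2$ constructed above.

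The main obstacle is the non-degeneracy $\kappa\neq 0$: the formula for $c$ is only meaningful in that case, so one must establish that the cubic coefficient does not vanish along the bifurcation curve. This requires explicit control of $u_2$ and of the resulting integrals against $\Psi^2,\Psi^4$; since $u_0$ and $\Psi$ are available in closed form from (\ref{q1front}) and (\ref{eigenfunctionq=1}), the computation is mechanical but lengthy, and its sign determines on which side of $\alpha^*$ the bifurcating branches emerge (positivity of the bracket in~(\ref{c}), i.e.\ $\kappa<0$, corresponds to the supercritical case $\alpha>\alpha^*$ seen in the numerics). A mild technical point is that $\rho_0$ is only $L^\infty$, but since $u_0\in C^1(\mathbb{R})\cap H^2(\mathbb{R})$ the linearised operators act boundedly $H^2\to L^2$ and the standard self-adjoint Fredholm framework is unaffected.
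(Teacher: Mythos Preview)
Your proof is correct and follows essentially the same Lyapunov--Schmidt reduction as the paper: decompose along the one-dimensional kernel $\mathrm{span}\{(0,\Psi)\}$, solve the complementary equation for the component $\zeta$ (the paper's $\mathbf V$) by the implicit function theorem, expand to second order in the kernel amplitude to find the auxiliary function $u_2$ (the paper's $V_{21}$), and read off the cubic coefficient in the reduced scalar equation.

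There are two small stylistic differences worth noting. First, you use the $\mathbb Z_2$-symmetry $(\tilde u,\tilde v)\mapsto(\tilde u,-\tilde v)$ to conclude a priori that $G$ is odd in $s$ and that $\zeta_u$, $\zeta_v$ have the claimed parities in $s$; the paper instead computes directly that $\mathbf V_1\equiv 0$ at first order, which amounts to the same thing but without invoking the symmetry. Second, you explicitly flag the non-degeneracy $\kappa\neq 0$ as an outstanding hypothesis needed for the formula for $c$ to make sense and for the pitchfork to be genuine; the paper simply writes down the formula~(\ref{c}) and does not verify the sign or non-vanishing of the bracket either, so on this point you are in fact being more careful than the paper. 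Your remark that the parity of $u_2$ in $x$ (oddness) is what collapses the integrals to $[\Delta,\infty)$ is correct and matches the paper's use of Appendix~\ref{appA}; you might state explicitly that this oddness follows from the equation $\mathcal L u_2 = -\tfrac12(1-\rho_0)\sin(u_0)\Psi^2$, whose right-hand side is odd in $x$.
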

The remainder of this section is dedicated to proving this theorem. We
employ Lyapunov-Schmidt reduction to show existence of a pitchfork
bifurcation whereby $v(x)$ becomes non-zero at the bifurcation point
determined in the previous section.

First, 
we set $\tilde{\alpha} = \alpha- \alpha^*$ with $\alpha^*$ given
by~(\ref{alphabif}) . Now define
$\mathbf{\tilde{F}}: H^2 \times H^2 \times \mathbb{R} \to L^2 \times
L^2$ as
$\mathbf{\tilde{F}}(\mathbf{\tilde{w}};\tilde{\alpha}) =
\mathbf{F}(\mathbf{\tilde{w}},\alpha^* +\tilde{\alpha})$, i.e.
\begin{equation*}
\mathbf{\tilde{F}}(\bm{\tilde{w}};\tilde{\alpha}) =\left(
\begin{array}{ c }
\tilde{u}_{xx} \\
\tilde{v}_{xx}
\end{array} \right)
-
\left(
\begin{array}{ c }
(1-\rho_{0})(\sin(\tilde{u}+u_0)\cos(\tilde{v})-\sin(u_0))  \\
(1-\rho_{0})\sin(\tilde{v})\cos(\tilde{u}+u_0) -(\alpha^*+\tilde{\alpha}) \sin(2\tilde{v})
\end{array} \right).
\end{equation*}
Note the nonlinear operator $\mathbf{\tilde{F}}(\bm{\tilde{w}};\tilde{\alpha})$ is smooth in both $\bm{\tilde{w}}$ and $\tilde{\alpha}$. Furthermore,  $\tilde{\mathbf{F}}(\bm{\tilde{w}} = \mathbf{0};\tilde{\alpha})=\bm{0}$ for
	all $\tilde{\alpha} \in \mathbb{R}.$ For $\tilde{\alpha}$ small, we
	wish to show the existence of a non-trivial solution
	$\bm{\tilde{w}}(\tilde{\alpha})$ in $H^2 \times H^2 \times \mathbb{R}$
	with $\bm{\tilde{w}}(0) = \mathbf{0}$.

 Linearising
$\mathbf{\tilde{F}}(\bm{\tilde{w}};\tilde{\alpha})$ about
$\bm{\tilde{w}}=\mathbf{0}$ yields  (see~\eqref{fulllinear})
\begin{equation}\label{L0L1}
  D\tilde{\mathbf{F}}(\mathbf{0};\tilde{\alpha})
  = \tilde{\mathcal{L}}_{\tilde{\alpha}}
  = \mathcal{L}_{0}
  +\tilde{\alpha}\mathcal{L}_1,
  \quad\mbox{where}\quad
  \mathcal{L}_{0} = \tilde{\mathcal{L}}_{\alpha^*},
  \quad \mathcal{L}_{1} = \left( \begin{array}{c c}
                                                                                     0 & 0\\
                                                                                     0 & 2  
\end{array}\right). 
\end{equation}
Both operators $\mathcal{L}_{0}$ and $\mathcal{L}_{1}$ map
$H^2\times H^2$ into $L^2\times L^2$. We denote the kernel and range of $\mathcal{L}_{0}$ by
$\text{ker}(\mathcal{L}_0)$ and $\text{ran}(\mathcal{L}_0)$,
respectively. The kernel of $\mathcal{L}_{0}$ is given by
$\text{ker}(\mathcal{L}_0) = \text{span}\{\bm{W}\}$ where
$\bm{W}=(0,\Psi)$ and $\Psi \in H^2$ is given by
(\ref{eigenfunctionq=1}) upon substituting $\Lambda=-2\alpha^*$. 
 Since $\text{dim}(\text{ker}(\mathcal{L}_0)) =1$, the range of $\mathcal{L}_0$ is a closed subspace of $L^2\times L^2$ and the orthogonal complement is $\text{ker}(\mathcal{L}_0)$,  $\mathcal{L}_0$ is a Fredholm operator of index zero; see \cite{Golubitsky1985}. Since
$\text{ker}(\mathcal{L}_{0})\ne \{0\}$, the operator $\mathcal{L}_0$ is
not invertible. Thus we employ Lyapunov-Schmidt reduction.

First we rewrite
\begin{equation}
\mathbf{\tilde{F}}(\bm{\tilde{w}};\tilde{\alpha}) = \mathcal{L}_0{\bm{\tilde{w}}} +\tilde{\alpha}\mathcal{L}_1\bm{\tilde{w}} +(1-\rho_{0})N_0(\bm{\tilde{w}}) + (\alpha^*+\tilde{\alpha})N_{1}(\bm{\tilde{w}})
\label{Fnew}
\end{equation}
where $\mathcal{L}_0$ and $\mathcal{L}_1$ are given by (\ref{L0L1}) and
\begin{equation*}
N_{0}(\bm{\tilde{w}}) = \left(
\begin{array}{ c }
\cos(u_0)\tilde{u}+\sin(u_0)-\sin(u_0+\tilde{u})\cos(\tilde{v})\\
\cos(u_0)\tilde{v}-\cos(u_0+\tilde{u})\sin(\tilde{v})
\end{array} \right), \quad N_{1}(\bm{\tilde{w}}) = 
\left(
\begin{array}{ c }
0\\
\sin(2\tilde{v}) - 2\tilde{v}
\end{array} \right).
\end{equation*}
Since $\mathcal{L}_0$ is an elliptic operator we can use the following decompositions
\begin{equation*}
  H^2 \times H^2 = \text{ker}(\mathcal{L}_0) \oplus \text{ker}(\mathcal{L}_0)^{\perp} \quad \text{and} \quad
  L^2 \times L^2 = \text{span}\{\bm{W}\} \oplus \text{span}\{\bm{W}\}^{\perp}, 
\end{equation*}
where $\perp$ denotes the orthogonal complement in $H^2\times H^2$ and
$L^2\times L^2$ respectively. The related projections are
\begin{equation*}
P:H^2 \times H^2 \to H^2 \times H^2 \quad \text{and} \quad Q: L^2 \times L^2 \to L^2 \times L^2,
\end{equation*}
where
\begin{equation*}
  P\bm{\tilde{w}} = \langle \bm{\tilde{w}}, \bm{W} \rangle_{H^2 \times H^2} \bm{W} \quad \text{and} \quad Q\bm{\tilde{w}} = \langle \bm{\tilde{w}}, \bm{W} \rangle_{L^2 \times L^2}\bm{W},
\end{equation*}
respectively. Hence,
$\text{ran}(P) = \text{ker}(\mathcal{L}_0)$ and
$\text{ran}(Q) = \text{span}\{\bm{W}\}$. Since $\mathcal{L}_0$ is a
Sturm-Liouville operator, its eigenvalue zero is simple and all other
eigenvalues and the continuous spectrum are away from zero. Therefore
$\mathcal{L}_0$ is an invertible operator from
$\text{ran}(I-P) = \text{ker}(\mathcal{L}_0)^{\perp} \to \text{ran}(I-Q) =
\text{span}(\bm{W})^{\perp}.$ We now decompose
$\bm{\tilde{w}} \in H^2 \times H^2$ into
$\bm{\tilde{w}} = B\bm{W}+\bm{V}$ where
\begin{equation*}
B\bm{W}=P(\bm{\tilde{w}}) \in \text{ker}(\mathcal{L}_0)
\end{equation*}
\begin{equation*}
\bm{V} = (I-P)(\bm{\tilde{w}}) \in \text{ker}(\mathcal{L}_0)^{\perp}.
\end{equation*}
Furthermore,
$\mathbf{\tilde{F}}(\bm{\tilde{w}};\tilde{\alpha})=\bm{0}$ can be
written as
\begin{subequations}
\begin{align}
  Q\mathbf{\tilde{F}}(B\bm{W}+\mathbf{V};\tilde{\alpha})&= \mathbf{0}, \label{Q} \\
  (I-Q)\mathbf{\tilde{F}}(B\bm{W}+\bm{V};\tilde{\alpha})&= \mathbf{0}. \label{I-Q}
\end{align}
\end{subequations}
Equation (\ref{I-Q}) motivates the definition of
$\mathbf{\tilde{F}}_{2}: \text{ker}(\mathcal{L}_0)^{\perp}\times
\mathbb{R} \times \mathbb{R} \to \text{span}(\bm{W})^{\perp}$ with
\begin{equation*}
  \mathbf{\tilde{F}}_2(\bm{V}; B, \tilde{\alpha}) = (I-Q)\mathbf{\tilde{F}}(B\bm{W}+\bm{V};\tilde{\alpha}).
\end{equation*}
Note that $\mathbf{\tilde{F}}_2(\mathbf{0}; 0, 0)=0$ and
$D_{\bm{V}}\mathbf{\tilde{F}}_2(\mathbf{0};0,0) =
(I-Q)D_{\bm{\tilde{w}}}\mathbf{\tilde{F}}(\mathbf{0};0) = \hat{\mathcal{L}}_0$
where
\begin{equation*}
\hat{\mathcal{L}}_0 : \text{ker}(\mathcal{L}_0)^{\perp} \to
\text{span}(\bm{W})^{\perp} \quad \text{with} \quad
\hat{\mathcal{L}}_0\bm{V} = \mathcal{L}_0\bm{V} \quad \mbox{for}\quad
\bm{V} \in \text{ker}(\mathcal{L})^\perp.
\end{equation*}
The eigenvalues of $\hat{\mathcal{L}}_0$ are bounded away from zero
and
$\text{ran}(\hat{\mathcal{L}}_0) = \text{ran}(\mathcal{L}_0) =
\text{span}(\bm{W})^{\perp},$ hence $\hat{\mathcal{L}}_0$ is
invertible. Thus, the Implicit Function Theorem gives that there
exists an $a_0>0$ such that for $\vert (B,\tilde{\alpha}) \vert < a_0$
there exists a unique $C^\infty$-smooth function
$\bm{V}(B,\tilde{\alpha}) \in \text{ker}(\mathcal{L}_0)^{\perp}$ with
$\mathbf{\tilde{F}}_2(\bm{V}(B,\tilde{\alpha});B,\tilde\alpha)=0$ and
$\bm{V}(0,0) = \mathbf{0}$. Since $\mathbf{\tilde{F}}_2$ is smooth in $B$
and $\tilde\alpha$, the function $\bm{V}(B,\tilde{\alpha})$ will also
depend smoothly on $(B,\tilde{\alpha})$.

So we can expand $\bm{V}(B,\tilde{\alpha})$ in a
Taylor series with respect to $B.$ Note that
$\bm{V}(0,\tilde{\alpha}) = \mathbf{0}$ as
$\mathbf{\tilde{F}}_2(\mathbf{0};0,\tilde{\alpha}) =
(I-Q)\mathbf{\tilde{F}}(\mathbf{0};\tilde{\alpha}) = \mathbf{0}$,
hence
\begin{equation*}
\bm{V}(B,\tilde{\alpha}) = B\bm{V}_1(\tilde{\alpha}) + B^2\bm{V}_2(\tilde{\alpha})  + B^3\bm{V}_3(\tilde{\alpha})+ \mathcal{O}(B^4), 
\end{equation*}
where each component
$\bm{V}_{i} = (V_{i1},V_{i2}) \in H^2 \times H^2.$ Substituting the
above expansion into (\ref{I-Q}) and equating the coefficients yields
the following equations,
\begin{subequations}
  \begin{equation} \label{linear1} (I-Q)(\mathcal{L}_0\bm{V_1}
    +\tilde{\alpha}\mathcal{L}_1(\bm{W}+\bm{V_1}))= \mathbf{0},
\end{equation}
\begin{equation} \label{linear2}
	(I-Q)\bigg(\mathcal{L}_0\bm{V_2} +\tilde{\alpha}\mathcal{L}_1\bm{V_2}
	+(1-\rho_{0})\left(
	\begin{array}{c}
	(V_{11}^2+(\Psi+V_{12})^2)\sin(u_0)/2\\
	V_{11}(\Psi + V_{12})\sin(u_0)
	\end{array} \right)\bigg) = \mathbf{0} ,
	\end{equation}
\end{subequations}
at $\mathcal{O}(B)$ and $\mathcal{O}(B^2)$ respectively. We wish to
solve (\ref{linear1}) for $\bm{V}_1(\tilde{\alpha}).$ Since
$\mathcal{L}_1\bm{W}=2\bm{W},$
$(I-Q)\mathcal{L}_1\bm{W} = \mathbf{0},$ hence
$\bm{V_1}(\tilde{\alpha})=\mathbf{0}$ is a solution of
(\ref{linear1}). The Implicit Function Theorem gives uniqueness of
solutions hence $\bm{V_{1}}(\tilde{\alpha})=\mathbf{0}$ is the only
solution. Consequently,
\begin{equation*}
  \bm{V}(B,\tilde{\alpha}) =  B^2\bm{V}_2(\tilde{\alpha})  + B^3\bm{V}_3(\tilde{\alpha})+ \mathcal{O}(B^4).
\end{equation*}
Now substituting the above into (\ref{Q}) yields
\begin{multline*}
  \langle \Psi, B^2(\mathcal{L}+2\alpha^*)V_{22}+B^3(\mathcal{L}+2\alpha^*)V_{32}+2B\tilde{\alpha}\Psi +2B^2\tilde{\alpha}V_{22}+2B^3\tilde{\alpha}V_{32} \rangle_{L^2} \\
  +B^3\bigg\langle \Psi, (1-\rho_{0})\bigg(
  \frac{1}{6}\cos(u_0)\Psi^3+\sin(u_0)V_{21}\Psi\bigg)
  \bigg\rangle_{L^2} - (\alpha^*+\tilde{\alpha})B^3 \bigg\langle \Psi,
  \frac{4}{3}\Psi^3 \bigg\rangle_{L^2} + \mathcal{O}(B^4) = 0,
\end{multline*}
Since $\mathcal{L}+2\alpha^*$ is a self-adjoint operator with
$\text{ker}(\mathcal{L}+2\alpha^*) = \text{span}\{\Psi\}$, the above can be written
as
\begin{equation*}
\tilde{\alpha} = \frac{1}{2}\bigg(\frac{4}{3}\alpha^*\langle \Psi, \Psi^3 \rangle_{L^2}- \left\langle \Psi, (1-\rho_{0})\left(\frac{1}{6}\cos(u_0)\Psi^3 + \sin(u_0)V_{21}\Psi\right)\right\rangle_{L^2} \bigg)(B^2+\mathcal{O}(B^3)).
\end{equation*}
In Appendix~\ref{appA}, we determine $V_{21}$ from (\ref{linear2}) and show
that it is an odd function. Therefore, using the properties of even
and odd functions the above relation can be written as
\begin{equation*}
  \tilde{\alpha} = \bigg(\frac{4}{3}\alpha^*\int_{0}^{\infty}\Psi^4dx - \int_{\Delta}^{\infty}V_{21}\Psi^2\sin(u_0)+\frac{\Psi^4}{6}\cos(u_0)dx\bigg)B^2 +\mathcal{O}(B^3).
\end{equation*}
Collecting all the results in this section yields Theorem \ref{existence}.
\end{section}


\begin{section}{Persistence for a smooth steep inhomogeneity} \label{hamslowfast}
The main objective in this paper is to show the existence of solutions in the BVP

\begin{gather}
	\begin{split}
	u_{xx} &= (1-d\rho(x))\sin u \cos v, 
	\\
	v_{xx} &= (1-d\rho(x)) \sin v \cos u -\alpha \sin 2v,	\label{alphasysfinal}
	\end{split}
	\\
	\lim_{x\to-\infty}(u(x),v(x))=(0,0) \quad \text {and} \quad
        \lim_{x\to+\infty}(u(x),v(x))=(2\pi,0), \label{BCSfsthm}
\end{gather}
with smooth spatial inhomogeneity given by 
\begin{equation}\label{tanhinhom}
\rho(x;\Delta,\delta) =
\frac{\tanh((x+\Delta)/\delta)+\tanh((-x+\Delta)/\delta)}{2}. 
\end{equation}
However in the previous two sections we studied the existence of stationary
solutions in the BVP with piecewise constant spatial inhomogeneity
\begin{equation*}
\rho_0(x;\Delta)= 
\begin{cases}
0,&  \vert x \vert > \Delta,  \\
1,     &  \vert x \vert < \Delta. 
\end{cases} 
\end{equation*}

In this section, using ideas from both Goh and Scheel~\cite{Goh2016} and Doelman \textit{et al.}  \cite{Doelman2009}, we show that if coupled non-zero solutions exist in the
BVP (\ref{alphasysfinal}-\ref{BCSfsthm}) with the piecewise constant
inhomogeneity $\rho_0(x;\Delta)$ then they persist for the smooth
steep inhomogeneity $\rho(x; \Delta, \delta)$ when $0<\delta \ll 1$. We
summarise the results in the following persistence theorem.

\begin{theorem} \label{smoothinhom}
  Fix $\alpha, d, \Delta>0.$ Assume there exists a transverse
  stationary solution 
  $$\bm{F_0}(x):=(u_0(x),v_0(x)),$$
to the BVP~(\ref{alphasysfinal}-\ref{BCSfsthm}) with piecewise constant spatial inhomogeneity $ \rho_0(x;\Delta)$, satisfying $\partial_x u_0(\pm \Delta)\ne0$ and $ \partial_x v_0(\pm \Delta) \ne 0 $. Define
  \begin{equation}\rho_{\delta}(x;\Delta)=
    \frac{12(1+\epsilon(\Delta/\delta))}{(12+8\epsilon(\Delta/\delta))\left(1+\sqrt{1-144\frac{(1+\epsilon(\Delta/\delta))}{(12+8
          \epsilon(\Delta/\delta))^2}}\cosh(\sqrt{2(1+\epsilon(\Delta/\delta))}x/\delta)\right)}, \label{homoclinicfinal}
  \end{equation}
  where $\delta>0$ is a small parameter and $\epsilon(\Delta/\delta)$
  is such that $\rho_\delta(\pm\Delta;\Delta)=1/2$. This implies
  \begin{equation*}
 \epsilon(\Delta/\delta) = 12\exp\left(\frac{-2\sqrt{2}\Delta}{\delta}\right)\left(1+\mathcal{O}\left(\frac{\Delta}{\delta}\exp\left(\frac{-2\sqrt{2}\Delta}{\delta}\right)\right)\right).
  \end{equation*}
Then there is a $\delta_0>0$ such that for all $ \delta <\delta_0$
  there exists a locally unique stationary solution
  $\bm{F_\delta}(x)=(u_{\delta}(x),v_{\delta}(x))$ of the BVP
  (\ref{alphasysfinal}-\ref{BCSfsthm}) with smooth spatial
  inhomogeneity $ \rho_{\delta}(x;\Delta).$
\end{theorem}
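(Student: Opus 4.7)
The plan is to implement the geometric singular perturbation (GSP) strategy outlined in the introduction. The first step is to append to the coupled BVP~\eqref{alphasysfinal}--\eqref{BCSfsthm} the auxiliary second-order ODE
\[
\delta^2 \rho_{xx} = 4\rho^3 - (6+4\epsilon)\rho^2 + 2(1+\epsilon)\rho,
\]
for which $\rho_\delta(x;\Delta)$ is an explicit homoclinic solution to $\rho=0$ once $\epsilon=\epsilon(\Delta/\delta)$ is chosen as in~\eqref{homoclinicfinal}. Setting $p=u_x$, $q=v_x$ and $\sigma=\delta\rho_x$, this produces a six-dimensional autonomous system in $(u,p,v,q,\rho,\sigma)$ with $\delta$ entering singularly in the $\rho$-subsystem. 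Passing to the fast variable $\tau=x/\delta$, the $(u,p,v,q)$ equations acquire a factor of $\delta$ on the right-hand side, while the $(\rho,\sigma)$ equations become the planar Hamiltonian system $\rho'=\sigma$, $\sigma'=4\rho^3-(6+4\epsilon)\rho^2+2(1+\epsilon)\rho$, identifying $(u,p,v,q)$ as the slow variables and $(\rho,\sigma)$ as fast.

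In the singular limit $\delta\to 0$ the reduced fast subsystem has saddle equilibria at $\rho=0$ and $\rho=1+\epsilon$, which lift to two four-dimensional, normally hyperbolic critical manifolds $M_0=\{\rho=0,\sigma=0\}$ and $M_1=\{\rho=1+\epsilon,\sigma=0\}$ on which the reduced slow flow is exactly the $(u,v)$-system with $\rho_0\equiv 0$ and $\rho_0\equiv 1$ respectively (up to $\mathcal{O}(\epsilon)$ corrections). By Fenichel's theorems, for $0<\delta\ll 1$ these critical manifolds persist as locally invariant slow manifolds $M_{0,\delta}, M_{1,\delta}$ with attached five-dimensional stable and unstable manifolds $W^{s,u}(M_{i,\delta})$, each $\mathcal{O}(\delta)$-close to the unperturbed counterpart. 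The assumed piecewise-constant solution $\bm{F_0}$ then supplies the three singular slow segments: on $M_0$ for $x<-\Delta$, on $M_1$ for $|x|<\Delta$, and on $M_0$ for $x>\Delta$. The two transitions at $x=\pm\Delta$ are realised by the upward and downward halves of the fast homoclinic in $(\rho,\sigma)$, during which $(u,p,v,q)$ remain frozen, matching the $C^1$ gluing of $\bm{F_0}$ across $x=\pm\Delta$. The exponentially small choice $\epsilon\sim e^{-2\sqrt{2}\Delta/\delta}$ is precisely what forces the passage time along the fast homoclinic near the saddle $\rho=1+\epsilon$ to correspond, in slow coordinates, to the interval of length $2\Delta$.

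Persistence for $\delta>0$ then reduces to showing the concatenated singular orbit is a transverse intersection of invariant manifolds in the six-dimensional phase space. I would verify (i) that the unstable manifold of the fixed point $(0,0,0,0,0,0)\subset M_{0,\delta}$, extended via $W^u(M_{0,\delta})$, meets $W^s(M_{1,\delta})$ transversally along the kink transition at $x=-\Delta$, and (ii) the symmetric condition at the antikink $x=\Delta$ between $W^u(M_{1,\delta})$ and the stable manifold of $(2\pi,0,0,0,0,0)\subset M_{0,\delta}$. The transversality hypotheses $\partial_x u_0(\pm\Delta)\ne 0$ and $\partial_x v_0(\pm\Delta)\ne 0$ enter exactly here: they ensure that the tangent vector $(p_0(\pm\Delta),\ast,q_0(\pm\Delta),\ast,0,0)$ of the singular slow segment at each jump point is not contained in the tangent to the relevant stable or unstable fibre, so the intersection is transverse in all slow directions. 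An Exchange Lemma in the style of Jones and Kopell then tracks orbits through the long passage of $x$-length $2\Delta$ near $M_{1,\delta}$, showing that an orbit entering along $W^u(M_{0,\delta})$ at $x=-\Delta$ in the correct position exits along $W^s(M_{0,\delta})$ at $x=\Delta$ in the position prescribed by $\bm{F_0}$, thereby producing the locally unique perturbed solution $\bm{F_\delta}$.

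The main obstacle will be the interplay between the two small parameters: the GSP parameter $\delta$ and the exponentially small $\epsilon(\Delta/\delta)$ that controls the homoclinic in the $\rho$-system. Standard Fenichel and Exchange-Lemma estimates are usually stated in a single small parameter, so care is required to check that the error bounds remain uniform as $\epsilon\to 0$, and that the duration of the fast-homoclinic passage near the saddle $\rho=1+\epsilon$ (which itself grows like $\log(1/\epsilon)$ in fast time) remains consistent with the slow evolution on $M_{1,\delta}$ needed to cover $(-\Delta,\Delta)$. A related subtlety is that as $\epsilon\to 0$ the saddle at $\rho=1+\epsilon$ of the fast system approaches the center $\rho=1/2$ only in a non-generic way (the singular-limit potential has a double-well structure at $\epsilon=0$), so the normal-hyperbolicity constants for $M_1$ must be tracked as functions of $(\delta,\epsilon)$ rather than $\delta$ alone, and the Exchange Lemma applied in a form (e.g.\ as in Doelman \emph{et al.}~\cite{Doelman2009}) adapted to this coupled-parameter situation.
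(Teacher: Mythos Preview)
Your proposal follows the same geometric singular perturbation framework as the paper: append the auxiliary ODE for $\rho$, obtain a six-dimensional slow--fast system, identify the two normally hyperbolic slow manifolds at $\rho=0$ and $\rho=1+\epsilon$, and recognise the piecewise-constant front $\bm{F_0}$ as a concatenation of slow segments joined by fast transitions. The setup is correct and matches the paper's Sections~\ref{inhomsys}--\ref{delta=0}.

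Where you diverge is in the closing mechanism. You propose an Exchange Lemma argument to track orbits through the long passage near $M_{1,\delta}$. The paper takes a more elementary route that exploits a structural feature you do not explicitly use: the fast $(\rho,\sigma)$ subsystem is \emph{completely decoupled} from the slow variables $(u,p,v,q)$. Hence the fast component of every orbit in the full system is exactly the explicit homoclinic $\rho_\delta(x)$, and one can build the perturbed three-dimensional manifolds $\mathcal{T}_\delta^{u/s}$ by simply flowing the slow variables under the nonautonomous system with $\rho=\rho_\delta(x)$ inserted. The paper then splits the $x$-axis into three regions (outer $|x|\geq\Delta$, transition layers of width $\sqrt{\delta}$, and inner $|x|\leq\Delta-\sqrt\delta$ where $|\rho_\delta-1|$ is exponentially small) and obtains direct $\mathcal{O}(\sqrt\delta)$ bounds on the distance between $\mathcal{T}_\delta^{u/s}$ and the singular-limit manifolds $\mathcal{T}_0^{u/s}$. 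Transversality of the intersection of $\mathcal{T}_0^u$ and $\mathcal{T}_0^s$ at $x=0$ then immediately gives the persisting intersection for small $\delta$. This sidesteps the Exchange Lemma entirely and dissolves your two-parameter worry: the decoupling means all $\epsilon$-dependence is confined to the explicitly known $\rho_\delta$, whose closeness to $\rho_0$ is controlled directly by Lemma~\ref{fastslowlem}.

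Two minor corrections. First, the saddle at $\rho=1+\epsilon$ tends to the saddle at $\rho=1$ as $\epsilon\to0$, not to the centre at $\rho=1/2$; the normal-hyperbolicity constants of $M_1$ therefore stay uniformly bounded and no delicate tracking of them is required. Second, the paper uses the hypotheses $\partial_x u_0(\pm\Delta)\neq0$, $\partial_x v_0(\pm\Delta)\neq0$ not for slow--fast transversality at the jump but to guarantee that the two-dimensional stable and unstable manifolds of the asymptotic equilibria can be written locally as graphs over $(u,v)$ near the matching points, which is what makes the explicit parametrisation of $\mathcal{T}_\delta^{u/s}$ possible.
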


\begin{remark}
	Upon setting $\delta = \sqrt{2}\tilde{\delta}$ in (\ref{tanhinhom}), $\rho(x;\Delta, \sqrt{2}\tilde{\delta}) = \rho_{\delta}(x;\Delta) +\mathcal{O}(\exp(-\Delta/\delta))$.
\end{remark}

The transversality of the solution $\bm{F_0}$ refers to the fact that
it is assumed that $\bm{F_0}$ is the locally unique solution of the dynamical
system associated to~\eqref{alphasysfinal} with $\rho=\rho_0$, which
lies in the transverse intersection of the unstable manifold of the
fixed point corresponding to $(u,v) =(0,0)$ and the stable manifold of
the fixed point corresponding to $(u,v) =(2\pi,0)$. Furthermore, the assumption $\partial_x u_0(\pm \Delta)\ne0$ and $ \partial_x v_0(\pm \Delta) \ne 0 $ implies both components of $\bm{F_0}$ must be non-zero and not have turning points at $x=\pm \Delta$. This ensures that the stable and unstable manifolds, introduced later, can be parametrised by $u$ and $v$.

In the previous section we showed the existence of such solutions $\bm{F_0}$ when $d=1$ through a pitchfork bifurcation. When $v(x)=0$, away from the bifurcation point, it is possible to use the implicit function theorem to show persistence of solutions for $\rho_\delta$, since the linearisation is invertible. However when $v(x) \ne0$, the linearisation is coupled and it becomes a challenge to study its invertibility. Here we use geometric singular perturbation theory to overcome the problem and provide an intuitive geometric description of the persistence of $\bm{F_0}$ to the BVP~\eqref{alphasysfinal} with $\rho=\rho_\delta$ and any fixed $d, \Delta, \alpha>0$.

The remainder of this section is spent proving this persistence
theorem as follows. Firstly, in Section~\ref{inhomsys} we show that
$\rho_\delta(x;\Delta)$ as given by~(\ref{homoclinicfinal}) satisfies
a second order singular ODE. In Section~\ref{extended} this equation
is coupled into the system~(\ref{alphasysfinal}), thus creating a
slow-fast system. We study the singular behaviour of this system in
Section~\ref{delta=0}. Finally in Section~\ref{deltasmall} we employ
geometric singular perturbation theory to show solutions persist and
thus completing the proof.

\subsection{A dynamical system to describe the spatial
  inhomogeneity} \label{inhomsys}
Many dynamical systems could be used to describe a continuous hat-like
spatial inhomogeneity. Here we use the second order ODE
\begin{equation}
\delta^2\rho_{xx} = 4\rho^3-(6+4\epsilon)\rho^2+2(1+\epsilon)\rho, \label{Gardner}
\end{equation} 
which has explicit table top pulse solutions \cite{Hamdi2011}. We
are interested in the case $0 \leqslant\epsilon \ll 1.$ One can
interpret (\ref{Gardner}) as the following first order system,
\begin{align} \label{inhomequation}
\begin{split}
\tilde{\rho}_{\xi} &= \tilde{s} \\
\tilde{s}_{\xi} &= 4\tilde{\rho}^3-(6+4\epsilon)\tilde{\rho}^2+2(1+\epsilon)\tilde{\rho},
\end{split}
\end{align}
where $x=\delta\xi.$ The system (\ref{inhomequation}) has three fixed
points
$(\tilde{\rho}_1^*,\tilde{s}_1^*) = (0,0),
(\tilde{\rho}_2^*,\tilde{s}_2^*)=(1/2,0)$ and
$(\tilde{\rho}_3^*,\tilde{s}_3^*)=(1+\epsilon,0).$ It can be shown
that $(\tilde{\rho}_1^*,\tilde{s}_1^*)$ and
$(\tilde{\rho}_3^*,\tilde{s}_3^*)$ correspond to saddle points whilst
$(\tilde{\rho}_2^*,\tilde{s}_2^*)=(1/2,0)$ is a centre
point. Furthermore, (\ref{inhomequation}) is a Hamiltonian system with
Hamiltonian given by
\begin{equation*}
  H(\tilde{\rho},\tilde{s}) = \frac{1}{2}\tilde{s}^2-\tilde{\rho}^4
  +(2+\frac{4}{3}\epsilon)\tilde{\rho}^3 - (1+\epsilon)\tilde{\rho}^2 ,
\end{equation*}
where the Hamiltonian  is defined such that it vanishes at the saddle
$(\tilde{\rho}_1^*,\tilde{s}_1^*)=(0,0)$. Since we restrict ourselves to
$0\leqslant \epsilon \ll 1$ the system~(\ref{inhomequation}) has two
special cases. They are $\epsilon = 0$ and $\epsilon>0$; see Figure
\ref{phaseplanesfs}.

When $\epsilon = 0$, the system (\ref{inhomequation}) reduces to
\begin{align}
\begin{split}
\tilde{\rho}_{\xi} &= \tilde{s}, 
\\
\tilde{s}_{\xi} &= 4\tilde{\rho}^3-6\tilde{\rho}^2+2\tilde{\rho}.
\end{split}
\label{reducedfastslowfast}
\end{align}
This system possesses heteroclinic orbits which
connect the saddle points $(\tilde{\rho}_1^*,\tilde{s}_1^*)$ and
$(\tilde{\rho}_3^*,\tilde{s}_3^*)$ and are explicitly described by,
\begin{equation}
 ( \tilde{\rho}_0^{\pm}(\xi),\tilde{s}_0^{\pm}(\xi)) =
  \left(\frac{\tanh\left(\pm \xi/\sqrt{2}\right)+1}{2}, \pm\frac{\sqrt{2}\sech^2(\xi/\sqrt{2})}{4}\right).
  \label{hetroclinic1}
\end{equation}
The fronts $\tilde{\rho}_0^{\pm}(\xi)$ are centred at $\xi=0$ in physical space. Here $\tilde{\rho}_0^{+}(\xi)$
corresponds to the monotonic increasing front, whilst
$\tilde{\rho}_0^{-}(\xi)$ the monotonic decreasing front.

\begin{figure}
	\centering
	\includegraphics[scale=0.37]{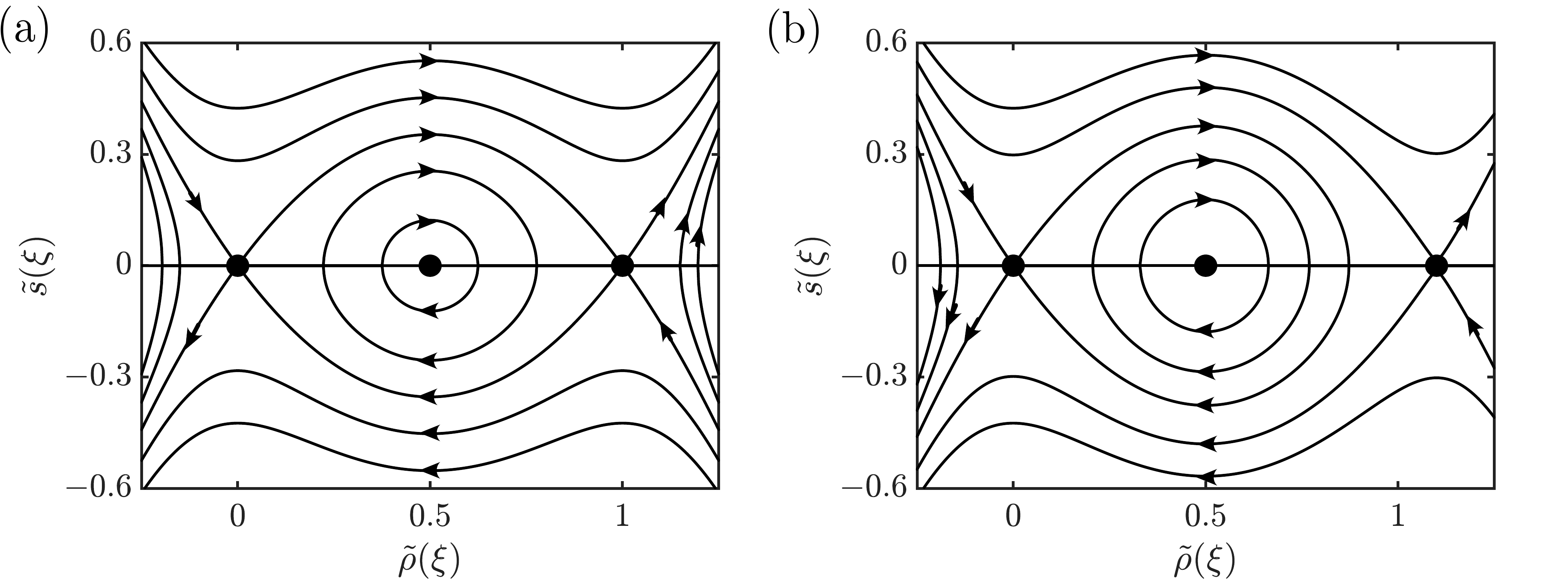}
	\caption{The left panel is a plot of the phase plane of the system~(\ref{inhomequation}) when $\epsilon=0$. The right panel when $\epsilon=0.1$.}
	\label{phaseplanesfs}
\end{figure}

All three of the fixed points persist for $0<\epsilon\ll1.$ The fixed
points $(\tilde{\rho}_1^*,\tilde{s}_1^*)$ and
$(\tilde{\rho}_2^*,\tilde{s}_2^*)$ are invariant, whilst
$(\tilde{\rho}_3^*,\tilde{s}_3^*)$ translates rightwards in phase
space. Furthermore, when $\epsilon >0$ the system
(\ref{inhomequation}) no longer possesses heteroclinic
orbits. Instead, the saddle point $(\tilde{\rho}_1^*,\tilde{s}_1^*)$
has a homoclinic orbit which is order $\sqrt{\epsilon}$ close to the
heteroclinic connections, see Figure \ref{phaseplanesfs}. Using
results in \cite{Hamdi2011}, the homoclinic orbit is explicitly
described by
\begin{equation}
(\tilde{\rho}_\epsilon(\xi),\tilde{s}_\epsilon(\xi)) =
\left(\frac{a}{1+b\cosh(c\xi)},-\frac{abc\sinh(c\xi)}{(1+b\cosh(c\xi))^2}\right)
\label{homoclinic1} 
\end{equation}
where
\begin{equation*}
a=\frac{12(1+\epsilon)}{12+8\epsilon}, \quad b= \sqrt{1-144\frac{1+\epsilon}{(12+8\epsilon)^2}} \quad \text{and} \quad c=\sqrt{2(1+\epsilon)}.
\end{equation*}
The pulse $\tilde{\rho}_\epsilon(\xi)$ is centred at $\xi=0$ in physical space. Setting
$\xi = 0$ in (\ref{homoclinic1}) one obtains the maximum height of the
pulse. As $\epsilon \to 0,$ the maximum height of the pulse tends to
one and the pulse widens. Note that substituting $\epsilon=0$ into
(\ref{homoclinic1}) yields the saddle point
$(\tilde{\rho}_3^*,\tilde{s}_3^*)$. Finally we obtain a
relationship between the perturbation parameter $\epsilon$ and the
width of the pulse. Defining $L>0$ to be such that
$\tilde{\rho}_\epsilon(\pm L)=1/2$, one obtains
\begin{equation*}
  L =
      \frac{1}{c} \arccosh\left(\frac{3+4\epsilon}{b(3+2\epsilon)}\right) =  \frac{1}{\sqrt{2}}\log\bigg(\frac{2\sqrt{3}}
      {\sqrt{\epsilon}}\bigg)(1+\mathcal{O}(\epsilon)). 
\end{equation*}
This  is an invertible relation between $L$ and the small
parameter~$\epsilon$ and leads to a function $\epsilon(L)$ satisfying 
$\epsilon(L) =
12\exp(-2\sqrt{2}L)(1+\mathcal{O}(L\exp(-2\sqrt{2}L)))$.

Returning to the original spatial variable $x$, the above results to give the following Lemma.
\begin{lemma} \label{fastslowlem}
	Consider $0<\epsilon, \delta \ll 1.$ Then the ODE (\ref{Gardner}) with boundary conditions $\rho(\pm\infty)=0$ has solution
	\begin{equation} \label{homocliniclemma}
	\rho_{\delta}(x;\epsilon) = \frac{12(1+\epsilon)}{(12+8\epsilon)\left(1+\sqrt{1-144\frac{(1+\epsilon)}{(12+8
				\epsilon)^2}}\cosh\big(\sqrt{2(1+\epsilon)}x/\delta\big)\right)}.
	\end{equation} 
Define $\Delta>0$ to be such that $\rho_{\delta}(\pm \Delta; \epsilon)=1/2$, then 
\begin{equation*}
\frac\Delta{\delta} = \frac{1}{c}
\arccosh\left(\frac{3+4\epsilon}{b(3+2\epsilon)}\right) =
\frac{1}{\sqrt{2}}\log\bigg(\frac{2\sqrt{3}}
{\sqrt{\epsilon}}\bigg)(1+\mathcal{O}(\epsilon)).
\end{equation*}
This is an invertible relation between $\Delta/\delta$ and $\epsilon$
and leads to a function $\epsilon(\Delta/\delta)$ satisfying
 \begin{equation} \label{epsilon}
\epsilon(\Delta/\delta) = 12\exp\left(\frac{-2\sqrt{2}\Delta}{\delta}\right)\left(1+\mathcal{O}\left(\frac{\Delta}{\delta}\exp\left(\frac{-2\sqrt{2}\Delta}{\delta}\right)\right)\right).
 \end{equation}
Furthermore, the pulse $\rho_\delta$ approximates $\rho_0$. To be
specific, for fixed $\Delta>0$ in the three regions
\begin{equation*}
R^{-} := (-\infty,-\Delta-\delta^{a}],\quad
R^{0} := [-\Delta+\delta^{a}, \Delta - \delta^{a}], 
R^{+} := [\Delta+\delta^{a-1}, \infty),
\end{equation*}
the pulses satisfy $\vert (\rho_{\delta}(x;\epsilon(\Delta/\delta))-\rho_{0}(x;\Delta), s_{\delta}(x;\epsilon(\Delta/\delta)))\vert = \mathcal{O}\left(\exp\left(-\sqrt{2}\delta^{a-1}\right)\right)$ for any $0<a<1.$
\end{lemma}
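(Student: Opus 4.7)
The plan is to split the proof into three tasks: verifying that~(\ref{homocliniclemma}) satisfies the ODE~(\ref{Gardner}) with the prescribed decay, extracting the $\epsilon$--$\Delta/\delta$ relation from $\rho_\delta(\pm\Delta)=1/2$, and establishing the three-region approximation.

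For the first task, I would use the Hamiltonian structure of~(\ref{inhomequation}). Since the origin is a saddle lying on the level $H=0$, any orbit homoclinic to it must lie on the zero level set, giving $\frac{1}{2}\delta^{2}\rho_{x}^{2} = \rho^{2}\bigl[\rho^{2}-(2+\tfrac{4}{3}\epsilon)\rho+(1+\epsilon)\bigr]$. Separating variables in $\delta\rho_{x}=\pm\rho\sqrt{2\rho^{2}-(4+\tfrac{8}{3}\epsilon)\rho+2(1+\epsilon)}$ and integrating---or equivalently quoting the explicit pulse formula of Hamdi \textit{et al.}~\cite{Hamdi2011}---yields~(\ref{homocliniclemma}). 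The boundary condition $\rho(\pm\infty)=0$ is immediate from the $\cosh$ in the denominator, and local uniqueness among orbits homoclinic to the origin follows from the phase plane in Figure~\ref{phaseplanesfs}.

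For the second task, evaluating~(\ref{homocliniclemma}) at $x=\pm\Delta$ and setting it equal to $1/2$ yields $\cosh(c\Delta/\delta)=(3+4\epsilon)/[b(3+2\epsilon)]$, hence $\Delta/\delta = c^{-1}\arccosh\bigl((3+4\epsilon)/[b(3+2\epsilon)]\bigr)$. For small $\epsilon$, a direct computation using $144(1+\epsilon)/(12+8\epsilon)^{2}=1-\epsilon/3+\mathcal{O}(\epsilon^{2})$ gives $b=\sqrt{\epsilon/3}\,(1+\mathcal{O}(\epsilon))$ and $c=\sqrt{2}\,(1+\mathcal{O}(\epsilon))$, so the argument of $\arccosh$ equals $\sqrt{3/\epsilon}\,(1+\mathcal{O}(\epsilon))$. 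Using $\arccosh(y)=\log(2y)+\mathcal{O}(y^{-2})$ for $y$ large then produces the claimed asymptotic $\Delta/\delta=(1/\sqrt{2})\log(2\sqrt{3}/\sqrt{\epsilon})(1+\mathcal{O}(\epsilon))$. The relation is strictly monotone decreasing in $\epsilon$, so it inverts uniquely, and Taylor reversion yields the displayed formula for $\epsilon(\Delta/\delta)$.

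For the third task, the key identity is $b\cosh(c\Delta/\delta)=2a-1=1+\mathcal{O}(\epsilon)$ at the boundary points. On $R^{0}$, $|x|\leqslant\Delta-\delta^{a}$ gives $\cosh(cx/\delta)/\cosh(c\Delta/\delta)\leqslant e^{-c(\Delta-|x|)/\delta}\leqslant e^{-c\delta^{a-1}}$, whence $b\cosh(cx/\delta)=\mathcal{O}(e^{-\sqrt{2}\delta^{a-1}})$ and therefore $\rho_{\delta}=a+\mathcal{O}(e^{-\sqrt{2}\delta^{a-1}})=\rho_{0}(x;\Delta)+\mathcal{O}(e^{-\sqrt{2}\delta^{a-1}})$; here I use that $\epsilon=o(e^{-\sqrt{2}\delta^{a-1}})$ for $0<a<1$ since $\delta^{a-1}\ll\delta^{-1}$ as $\delta\to 0$, so the algebraic $\mathcal{O}(\epsilon)$ correction from $a$ is dominated by the exponential remainder. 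Symmetrically, on $R^{\pm}$ the inequality $|x|\geqslant\Delta+\delta^{a}$ forces $b\cosh(cx/\delta)\geqslant(2a-1)e^{c\delta^{a-1}}$, so $\rho_{\delta}=\mathcal{O}(e^{-\sqrt{2}\delta^{a-1}})=\rho_{0}+\mathcal{O}(e^{-\sqrt{2}\delta^{a-1}})$. The matching bound on $s_{\delta}=\delta\rho_{\delta,x}$ follows from the Hamiltonian identity $s_{\delta}^{2}=2\rho_{\delta}^{2}\bigl[\rho_{\delta}^{2}-(2+\tfrac{4}{3}\epsilon)\rho_{\delta}+(1+\epsilon)\bigr]$, which controls $s_{\delta}$ algebraically by $\rho_{\delta}$ on the homoclinic. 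The only delicate point, and the part I would do most carefully, is verifying that every $\mathcal{O}(\epsilon)$ correction stemming from $a$, $b$, $c$ remains subdominant to $e^{-\sqrt{2}\delta^{a-1}}$ uniformly in each region; this is automatic for $0<a<1$ but must be checked to conclude.
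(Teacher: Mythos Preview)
Your argument is correct and matches the paper's approach: the explicit homoclinic via~\cite{Hamdi2011} and the $\epsilon$--$\Delta/\delta$ relation are derived in the text immediately preceding the lemma exactly as you do, and the lemma is then stated without further proof. Your treatment of the three-region estimate, which the paper leaves implicit, is sound; the identity $b\cosh(c\Delta/\delta)=2a-1$ together with the comparison $\epsilon\ll e^{-\sqrt{2}\delta^{a-1}}$ (valid since $2\Delta/\delta\gg\delta^{a-1}$ for fixed $\Delta$ and $0<a<1$) is precisely what is needed.
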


\subsection{The extended slow-fast system} \label{extended}

Here we extend the inhomogeneous system~(\ref{alphasysfinal}) with the
equation~(\ref{Gardner}) to describe a smooth steep spatial
inhomogeneity~$\rho$. To be specific, we consider the following system
\begin{align}
\begin{split}
u_{xx} &= (1-d\rho)\sin u \cos v, 
\\
v_{xx} &= (1-d\rho)\sin v \cos u -\alpha\sin 2v , 
\\
\delta^2\rho_{xx} &=
4\rho^3-(6+4\epsilon(\Delta/\delta))\rho^2+2(1+\epsilon(\Delta/\delta))\rho,  
\end{split}
\label{fastslow}
\end{align}
where $u$, $v$, and $\rho$ are the dependent variables whilst $d$,
$\Delta$, and $\alpha$ are constants. The perturbation parameter
$0<\delta \ll 1$ corresponds to the steepness of the smooth spatial
inhomogeneity. Furthermore, $0<\epsilon(\Delta/\delta) \ll 1$ is
given by the condition $\rho(\pm \Delta; \epsilon)=1/2$ and satisfies (\ref{epsilon}). Thus $\epsilon(\Delta/\delta)$ is determined by both
the length and steepness of the spatial inhomogeneity. For
fixed $\Delta>0,$ $\epsilon(\Delta/\delta) \to 0$ as $\delta \to 0$ and
$0<\epsilon \ll \delta \ll1$.

Notice that the third equation of
(\ref{fastslow}) is independent of the first two. On the other hand,
the first two equations in (\ref{fastslow}) are coupled to each other
and the last equation in the system. We can rewrite (\ref{fastslow})
as the following six dimensional first order dynamical system,
\begin{align}
\begin{split}
u_{x} &= p, 
\\
p_{x} &= (1-d\rho)\sin u \cos v, 
\\
v_{x} &= q,
\\
q_{x} &= (1-d\rho)\sin v \cos u -\alpha\sin 2v,
\\
\delta\rho_{x} &= s, 
\\
\delta s_{x} &= 4\rho^3-(6+4\epsilon(\Delta/\delta))\rho^2+2(1+\epsilon(\Delta/\delta))\rho. 
\end{split}\label{fastslowslow}
\end{align}
We call this the `slow' system.
This system has saddle points at
\[
  \mathcal{B}_{-\infty}=(0,0,0,0,0,0)\quad \text{and} \quad
  \mathcal{B}_{+\infty}=(2\pi,0,0,0,0,0) .
\]
We are interested in solutions of the
`slow' system (\ref{fastslowslow}) with the boundary conditions
\[
  \lim_{x\to-\infty}(u,p,v,q,\rho,s)(x) =\mathcal{B}_{-\infty} \quad \text{and} \quad
  \lim_{x\to +\infty}(u,p,v,q,\rho,s)(x) = \mathcal{B}_{+\infty}.
\]
Upon making the change of variable $x=\delta \xi$ we obtain the `fast'
system,
\begin{align}
\begin{split}
\tilde{u}_{\xi} &= \delta \tilde{p}, 
\\
\tilde{p}_{\xi} &= \delta (1-d\tilde{\rho})\sin \tilde{u} \cos \tilde{v}, 
\\
\tilde{v}_{\xi} &= \delta \tilde{q}, 
\\
\tilde{q}_{\xi} &= \delta((1-d\tilde{\rho})\sin \tilde{v} \cos \tilde{u} -\alpha\sin 2\tilde{v}), 
\\
\tilde{\rho}_{\xi} &= \tilde{s}, 
\\
\tilde{s}_{\xi} &= 4\tilde{\rho}^3-(6+4\epsilon(\Delta/\delta))\tilde{\rho}^2+2(1+\epsilon(\Delta/\delta))\tilde{\rho}.
\end{split} \label{fastslowfast}
\end{align}
Note that the last two equations in the `fast' system
(\ref{fastslowfast}) are exactly the system (\ref{inhomequation}).

The slow and fast systems~(\ref{fastslowslow})
and~(\ref{fastslowfast}) are equivalent when $\delta \ne0$. However
they are not equivalent in the limit $\delta \to 0$. Furthermore, the
system~(\ref{fastslowslow}) is singularly perturbed in the limit
$\delta \to 0$. We first study both systems when
$\delta \to 0$. Then we use regular and singular perturbation theory
to analyse the systems when $0<\delta\ll1$.

\subsection[A]{Dynamics of the extended system in the limit ${\bm{\delta \to 0}}$} \label{delta=0}
\subsubsection{Fast dynamics}
When $\delta \to 0,$ the `fast' system (\ref{fastslowfast}) reduces to
the planar system (\ref{reducedfastslowfast}) and
$(\tilde{u},\tilde{p},\tilde{v},\tilde{q})$ staying constant. We call
this the `fast' reduced system (FRS). Recall that
(\ref{reducedfastslowfast}) has saddle points
$(\tilde{\rho},\tilde{s})=(0,0)$ and $(\tilde{\rho},\tilde{s})=(1,0).$
Thus, we can define the following 4-dimensional normally hyperbolic
invariant slow manifolds
\begin{equation*}
\begin{split}
\mathcal{M}_{0}^{l}= \{(\tilde{u}, \tilde{p}, \tilde{v}, \tilde{q}, \tilde{\rho}, \tilde{s}) = (\tilde{u}, \tilde{p}, \tilde{v}, \tilde{q}, 0, 0) \hspace{6pt} \vert \hspace{6pt} (\tilde{u}, \tilde{p}, \tilde{v}, \tilde{q}) \in \mathbb{R}^4 \},\\
\mathcal{M}_{0}^{r}= \{(\tilde{u}, \tilde{p}, \tilde{v}, \tilde{q}, \tilde{\rho}, \tilde{s}) = (\tilde{u}, \tilde{p}, \tilde{v}, \tilde{q}, 1, 0) \hspace{6pt} \vert \hspace{6pt} (\tilde{u}, \tilde{p}, \tilde{v}, \tilde{q}) \in \mathbb{R}^4 \}.
\end{split}
\end{equation*}
The manifolds $\mathcal{M}_{0}^{l/r}$ have 5-dimensional stable and
unstable manifolds $\mathcal{W}^{s/u}(\mathcal{M}_{0}^{l/r})$. We are interested in the connections between the manifolds
$\mathcal{W}^{s/u}(\mathcal{M}_{0}^{l})$ and
$\mathcal{W}^{s/u}(\mathcal{M}_{0}^{r})$ which correspond to the
one dimensional heteroclinic connections~(\ref{hetroclinic1}) of the saddle points
$(\tilde{\rho},\tilde{s})=(0,0)$ and $(\tilde{\rho},\tilde{s})=(1,0)$
in the 4-parameter family
$(\tilde{u}, \tilde{p}, \tilde{v}, \tilde{q})\in \mathbb{R}^4$.

\subsubsection{Slow dynamics}
On the other hand taking $\delta \to 0$ in the `slow' system
(\ref{fastslowslow}) yields the following differential-algebraic
system,
\begin{align}
\begin{split}
u_{x} &= p, 
\\
p_{x} &= (1-d\rho)\sin u \cos v, 
\\
v_{x} &= q, 
\\
q_{x} &= (1-d\rho)\sin v \cos u -\alpha\sin 2v , 
\\
0 &= s, 
\\
0 &= 4\rho^3-6\rho^2+2\rho.
\end{split}\label{reducedfastslowslow}
\end{align}
We call this the `slow' reduced system (SRS). Solving the last two
algebraic equations in this system yields the solution set
$(\rho,s) = \{(0,0),(1/2,0),(1,0)\}$. These are exactly the fixed
points of (\ref{reducedfastslowfast}).  The point $(\rho, s) =(0,0)$
corresponds to the slow manifold~$\mathcal{M}_{0}^{l}$
and the slow  dynamics on this manifold is 
\begin{align*}
u_x&= p, \quad p_{x} = \sin u \cos v; \nonumber
\\
v_x&=q,  \quad q_{x} = \sin v \cos u -\alpha\sin 2v. 
\end{align*}
Similarly the point $(\rho,s) =(1,0)$ corresponds
to~$\mathcal{M}_{0}^{r}$ and the slow dynamics on this manifold is 
\begin{align*}
u_x&= p, \quad p_{x} = (1-d)\sin u \cos v; \nonumber
\\
v_x&=q,  \quad q_{x} = (1-d)\sin v \cos u -\alpha\sin 2v.  
\end{align*}

The dynamics of the system (\ref{alphasysfinal}) with the piecewise
constant spatial inhomogeneity $\rho(x;\Delta) = \rho_0(x;\Delta)$ can
be described as the slow dynamics on $\mathcal{M}_{0}^{l}$ for $|x|>\Delta$
and on $\mathcal{M}_{0}^{r}$ for $|x|<\Delta$ with matching conditions
at $|x|=\Delta$. The slow dynamics on the four dimensional manifold
$\mathcal{M}_{0}^{l}$ have a fixed point $B_{-\infty}$ at
$(u,p,v,q)=(0,0,0,0)$ and $B_{+\infty}$ at
$(u,p,v,q)=(2\pi,0,0,0)$. These fixed points are saddles with two
dimensional stable and two dimensional unstable manifolds. Denote the
unstable manifold to $B_{-\infty}$ by
$\mathcal{W}_{0,\text{slow}}^u(B_{-\infty})$ and the stable manifold
to $B_{+\infty}$ by $\mathcal{W}_{0,\text{slow}}^s(B_{+\infty})$. 

 If $\bm{F_0}=(u_{0}(x),v_{0}(x))$ is a stationary solution to
(\ref{alphasysfinal}), with piecewise constant spatial inhomogeneity
given by $\rho_{0}(x;\Delta)$ and satisfies the boundary conditions
(\ref{BCSfsthm}), then
$(u_{0}, p_{0},v_{0}, q_{0}) = (u_{0}, \partial_x u_{0},v_{0}, \partial_x v_{0})  $ lies on the
unstable manifold $\mathcal{W}_{0,\text{slow}}^u(B_{-\infty})$ for
$x<-\Delta$ and on the stable manifold
$\mathcal{W}_{0,\text{slow}}^s(B_{+\infty})$ for $x>\Delta$. Then the
existence of a unique transverse solution $\bm{F_0}$ is equivalent to
continuing the solutions on the two dimensional unstable manifold
$\mathcal{W}_{0,\text{slow}}^u(B_{-\infty})$ at $x=-\Delta$ with the
flow of the system on $\mathcal{M}_0^r$ up to $x=\Delta$ and matching
one of them to a unique solution on the stable manifold
$\mathcal{W}_{0,\text{slow}}^s(B_{+\infty})$. Hence the transversality
of $\bm{F_0}$ refers to a transverse intersection of the continuation
of the two-dimensional unstable manifold  with
the two-dimensional stable manifold in the four dimensional
$(u,p,v,q)$ space. Since the system is Hamiltonian, the transverse intersection is one dimensional, despite the fact that a dimension count suggests that the intersection is zero dimensional.

\subsubsection{Combining the geometry of the slow and fast dynamics}
Both the two dimensional stable and unstable manifolds
$\mathcal{W}_{0,\text{slow}}^s(B_{+\infty})$ and
$\mathcal{W}_{0,\text{slow}}^u(B_{-\infty})$ are subsets of the four dimensional slow
manifold $\mathcal{M}_{0}^{l}$. Recall that $\bm{F_0}(x)$ lies on $\mathcal{W}_{0,\text{slow}}^u(B_{-\infty})$ for $x<-\Delta$ and on $\mathcal{W}_{0,\text{slow}}^s(B_{+\infty})$ for $x>\Delta$. Define $\bm{F_0}(\pm \Delta) = (u_{0}^{\pm}, p_{0}^{\pm},v_{0}^{\pm}, q_{0}^{\pm}) \in \mathbb{R}^4$  where by assumption $p_0^{\pm} = \partial_x u_0(\pm \Delta) \ne0$ and $ q_0^{\pm} = \partial_x v_0(\pm \Delta) \ne 0 $. This ensures that $\mathcal{W}_{0,\text{slow}}^s(B_{+\infty})$ and
$\mathcal{W}_{0,\text{slow}}^u(B_{-\infty})$ can be written as graphs over $u$ and $v$. Hence nearby $\bm{F_0}(\pm\Delta)$, the manifolds
$\mathcal{W}_{0,\text{slow}}^s(B_{+\infty})$ and
$\mathcal{W}_{0,\text{slow}}^u(B_{-\infty})$ can be characterised by
their $(u,v)$ values at $x=\pm\Delta$ and give two dimensional
sub-manifolds in $\mathcal{M}_{0}^{l}:$
\begin{eqnarray*}
	\overline{\mathcal{W}}_{0,\text{slow}}^u&:=&\{(u,p_0^u(u,v),v,q_0^u(u,v))
	\mid (u,v) \mbox{ near } (u_0^-,v_0^-) \},\\
	\overline{\mathcal{W}}_{0,\text{slow}}^s&:=&\{(u,p_0^s(u,v),v,q_0^s(u,v))
	\mid (u,v) \mbox{ near } (u_0^+,v_0^+) \}.
\end{eqnarray*}
Flowing the unstable sub-manifold
$\overline{\mathcal{W}}_{0,\text{slow}}^u$ forward with the flow of
the slow system on $\mathcal{M}_0^r$ for a $\Delta$ length gives a
three dimensional manifold containing
$(u_{0}, p_{0}, v_{0}, q_{0})([-\Delta,0])$. Similarly, flowing the stable sub-manifold
$\overline{\mathcal{W}}_{0,\text{slow}}^s$ backwards with the flow of
the slow system on $\mathcal{M}_0^r$ for length $\Delta$ gives a three
dimensional manifold containing
$(u_{0}, p_{0},v_{0}, q_{0})([0,\Delta])$.  We will denote these manifolds as
\begin{eqnarray*}
	\overline{\mathcal{W}}_{0,\text{slow}}^{u,r} &=&
	\{\Phi_{x,-\Delta}^{0,r,\text slow} (u,p,v,q) \mid (u,p,v,q) \in
	\overline{\mathcal{W}}_{0,\text{slow}}^u ,\, x\in[-\Delta,0]  \},\\
	\overline{\mathcal{W}}_{0,\text{slow}}^{s,r} &=&
	\{\Phi_{x,\Delta}^{0,r,\text slow} (u,p,v,q) \mid (u,p,v,q) \in
	\overline{\mathcal{W}}_{0,\text{slow}}^s,\, x\in[0,\Delta]  \},
\end{eqnarray*}
where $\Phi_{x,x_0}^{0,r,\text slow}$ denotes the flow of the slow system
on the right slow manifold~$\mathcal{M}_0^r$. The transversality
assumption on $\bm{F_0}$ implies that the boundaries of these two
manifolds at $x=0$ intersect transversely.

On the other hand flowing the unstable sub-manifold
$\overline{\mathcal{W}}_{0,\text{slow}}^u$ backwards with the flow of
the slow system on $\mathcal{M}_0^l$ gives a three dimensional
manifold containing
$(u_{0}, p_{0},v_{0}, q_{0})((-\infty,-\Delta])$. Similarly, flowing the stable
sub-manifold $\overline{\mathcal{W}}_{0,\text{slow}}^s$ forward with
the flow of the slow system on $\mathcal{M}_0^l$ gives a three
dimensional manifold containing
$(u_{0}, p_{0},v_{0},
q_{0})([\Delta,+\infty))$.  We will denote these manifolds as
\begin{eqnarray*}
\overline{\mathcal{W}}_{0,\text{slow}}^{u,l}
  &=&\{\Phi^{0,l,slow}_{x,-\Delta}(u,p,v,q)\mid 
(u,p,v,q) \in \overline{\mathcal{W}}_{0,\text{slow}}^u, \,
x\in(-\infty,-\Delta]\}, \\
  \overline{\mathcal{W}}_{0,\text{slow}}^{s,l}
  &=& \{\Phi^{0,l,slow}_{x,\Delta}(u,p,v,q)\mid 
(u,p,v,q) \in \overline{\mathcal{W}}_{0,\text{slow}}^s, \,
x\in[\Delta,\infty)\}
\end{eqnarray*}
where $\Phi_{x,x_0}^{0,l,\text slow}$ denotes the flow of the slow system on the left slow manifold~$\mathcal{M}_0^l$.

Extending these sets with either the unstable respectively stable manifolds
of the fast dynamics or the fixed points gives the following three
dimensional manifolds in $\mathbb{R}^6$
\begin{eqnarray*}
		{\mathcal{S}}_0^{u,l} &=&
		\{(u,p,v,q,0,0) \mid (u,p,v,q) \in
		\overline{\mathcal{W}}_{0,\text{slow}}^{u,l}\};\\
		{\mathcal{S}}_0^{s,l} &=&
		\{(u,p,v,q,0,0) \mid (u,p,v,q) \in
		\overline{\mathcal{W}}_{0,\text{slow}}^{s,l}\};\\
	\mathcal{S}_0^u &=&
	\{(u,p,v,q,\tilde\rho_0^{+}(\xi),\tilde s_0^{+}(\xi))\mid
	(u,p,v,q) \in
	\overline{\mathcal{W}}_{0,\text{slow}}^u, \,
	\xi \in \mathbb{R}\}; \\
	\mathcal{S}_0^s &=&
	\{(u,p,v,q,\tilde\rho_0^{-}(\xi),\tilde s_0^{-}(\xi))\mid
	(u,p,v,q) \in
	\overline{\mathcal{W}}_{0,\text{slow}}^s, \,
	\xi \in \mathbb{R} \};\\
	{\mathcal{S}}_0^{u,r} &=&
	\{(u,p,v,q,1,0) \mid (u,p,v,q) \in
	\overline{\mathcal{W}}_{0,\text{slow}}^{u,r}\};\\
	{\mathcal{S}}_0^{s,r} &=&
	\{(u,p,v,q,1,0) \mid (u,p,v,q) \in
	\overline{\mathcal{W}}_{0,\text{slow}}^{s,r}\}.
\end{eqnarray*}
Here $(\tilde\rho_0^{\pm},\tilde s_0^{\pm})(\xi)$ are the heteroclinic
connections (\ref{hetroclinic1}) between saddle points $(0,0)$ to
$(1,0)$ in the FRS.
Note $\mathcal{S}_0^{u} $ is a three dimensional submanifold of the
five dimensional unstable manifold of $\mathcal{M}_{0}^{l}$ and
$\mathcal{S}_0^{s} $ is a three dimensional submanifold of the five
dimensional stable manifold of $\mathcal{M}_{0}^{l}$. To capture a
full neighbourhood of the solution associated with $\bm{F_0}(x)$, we
combine these manifolds as follows (see Figure~\ref{fig.approx_by_smooth}):
\begin{eqnarray*}
	\mathcal{T}_0^u &=& \mathcal{S}_0^{u,l} \cup \mathcal{S}_0^u \cup \mathcal{S}_0^{u,r};\\
	\mathcal{T}_0^s &=& \mathcal{S}_0^{s,l} \cup \mathcal{S}_0^s \cup \mathcal{S}_0^{s,r}.
\end{eqnarray*}

\begin{figure}[h]
	\captionsetup[subfigure]{labelformat=empty}
	\centering
\includegraphics[scale=0.58]{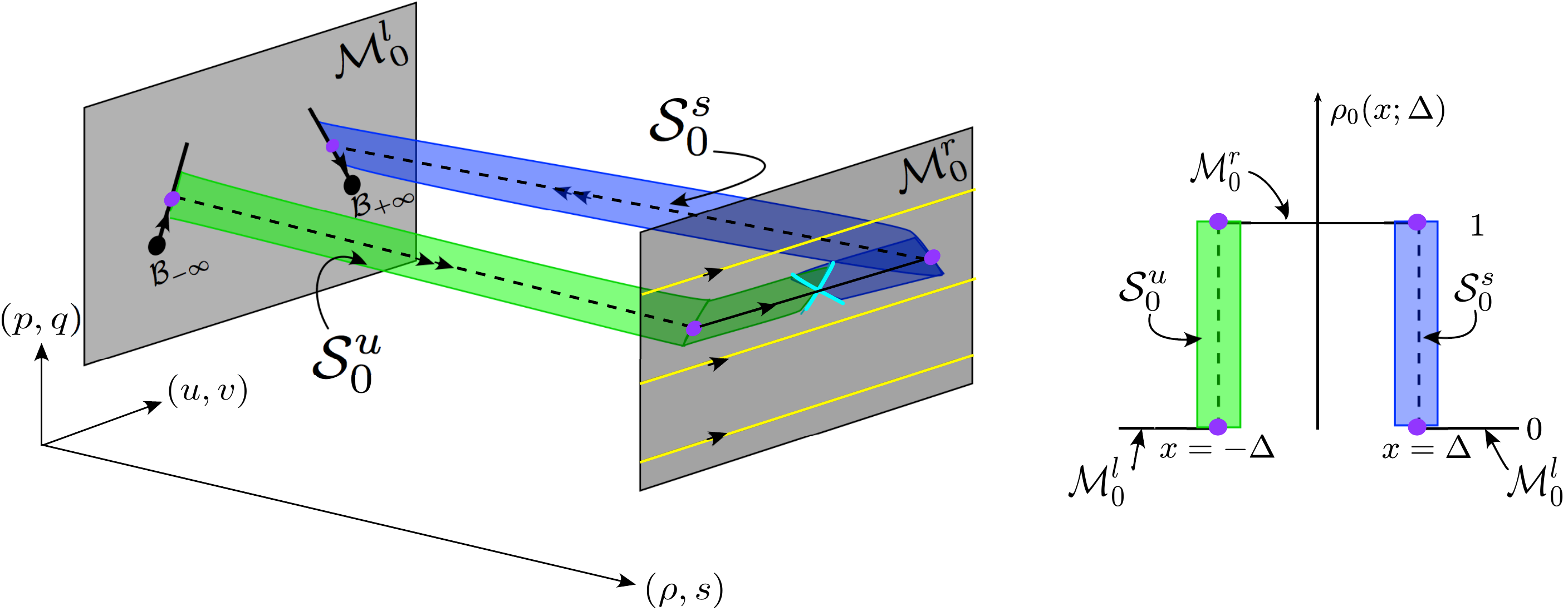}
	\caption{\label{fig.approx_by_smooth}%
          The figure depicts a sketch of the manifolds of the system (\ref{fastslowslow}) in the limit $\delta \to 0.$ The left
          grey manifold represents~$\mathcal{M}_0^l$ and the black curves on it
          represent the unstable and stable manifolds of $\mathcal{B}_{\pm \infty}$ on which $\bm{F_0}$ lies for $\vert x \vert > \Delta$. The
          right grey manifold represents~$\mathcal{M}_0^r$ and the
          yellow curves depict the flow on this manifold with the
          black curve corresponding to $\bm{F_0}$ for $|x|<\Delta$. The purple dots correspond to $(u_0^{\pm}, p_0^{\pm}, v_0^{\pm}, q_0^{\pm})$. The
          green manifold in~$\mathcal{M}_0^r$
          represents~$\mathcal{S}_0^{u,r}$ and the blue manifold
          in~$\mathcal{M}_0^r$ represents~$\mathcal{S}_0^{s,r}$. The
          light blue curves are the edges of~$\mathcal{S}_0^{u,r}$
          and~$\mathcal{S}_0^{s,r}$ when $x=0$. This illustrates the
          transverse intersection of these edges.  }
\end{figure}

\begin{figure}[h]
	\captionsetup[subfigure]{labelformat=empty}
	\centering
	\includegraphics[scale=0.56]{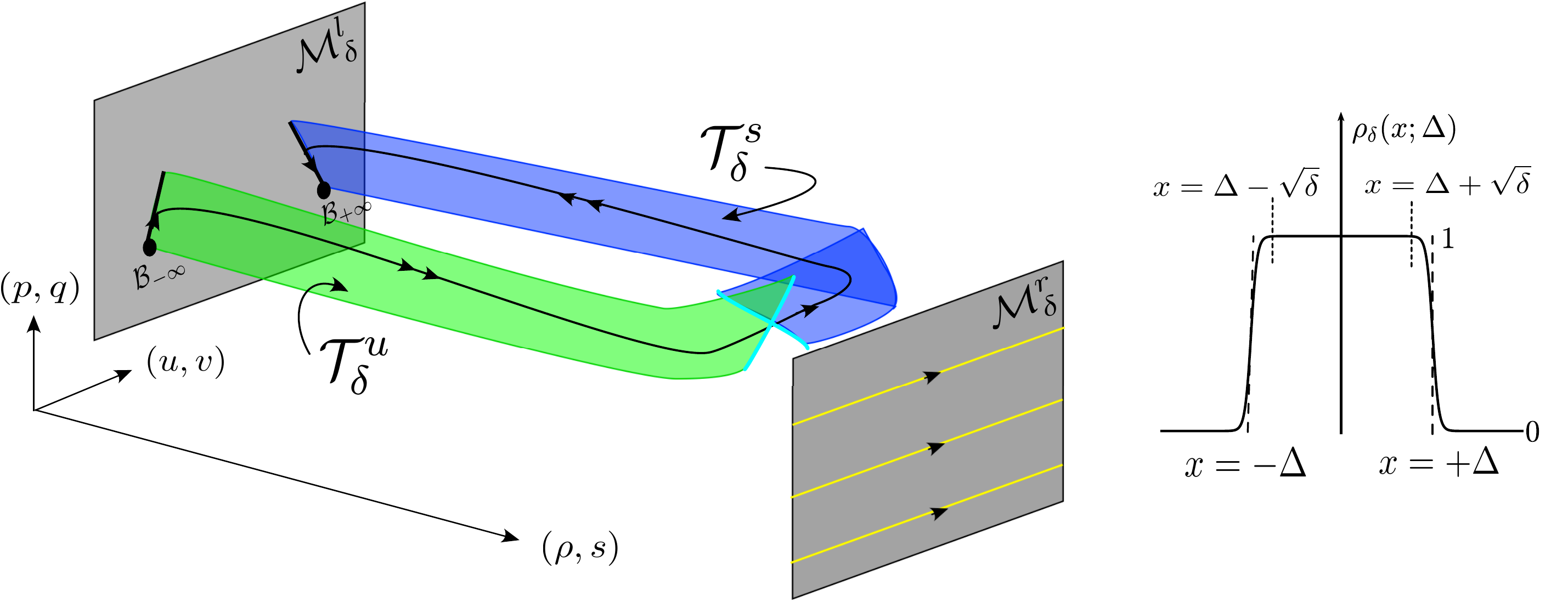}
	\caption{%
		The figure depicts a sketch of the manifolds of the system (\ref{fastslowslow}) when $0<\delta \ll 1$.  The left
		grey manifold is the persisting manifold
		$\mathcal{M}_\delta^l=\mathcal{M}_0^l$ and the right grey manifold represents $\mathcal{M}_\delta^r.$ The bold black curves on $\mathcal{M}_\delta^l$ depict the relevant unstable and stable manifolds of $\mathcal{B}_{\pm \infty}$. The yellow curves on $\mathcal{M}_\delta^r$ represent the flow on this manifold. The front green
		manifold is the part of the unstable manifold of the origin, $\mathcal{T}_\delta^u$, while the back blue manifold
		is the part of the stable manifold of $(2\pi,0,0,0,0,0)$,  $\mathcal{T}_\delta^s$.  The black curve is the
		persisting solution~$\bm{F_\delta}$. The double arrows represent fast transition whilst the single represent slow dynamics. To the right is a sketch of $\rho_{\delta}(x;\Delta)$ and the regions we split $\mathcal{T}_\delta^{u/s}$ into. }
	\label{smoothmanifold}
\end{figure}

\subsection[A1]{Dynamics of the extended system when
  $\bm{0<\delta\ll 1}$} \label{deltasmall}
When $0<\delta \ll 1$, the dynamics of the fast variables $(\rho,s)$
remain uncoupled from the slow $(u,p,v,q)$ variables. This two
dimensional fast sub-system is given by (\ref{inhomequation}) with
$\epsilon(\Delta/\delta)$ approximated by (\ref{epsilon}). Its limit for
$\delta\to 0$ (i.e.  also $\epsilon\to 0$) is
(\ref{reducedfastslowfast}). The fixed points of
\eqref{reducedfastslowfast} persist for $\delta>0$, see
section~\ref{inhomsys}.  However, the heteroclinic connections in
\eqref{reducedfastslowfast} do not persist. Instead a homoclinic
connection to the origin is created. This connection is denoted by
$(\rho_\delta,s_\delta)$ and is explicitly described in the slow
variables by~(\ref{homocliniclemma}). Recall that
$\epsilon(\Delta/\delta)$ is such that $\rho_\delta(\pm\Delta) =1/2$.

The persistence of the fixed points in the two dimensional fast
sub-system implies that the full system (\ref{fastslowfast}) has
$4$-dimensional normally hyperbolic invariant slow manifolds,
explicitly given by
\begin{equation*}
\mathcal{M}_{\delta}^{l} = \{(\tilde{u}, \tilde{p}, \tilde{v}, \tilde{q}, 0, 0)\}, \quad
\mathcal{M}_{\delta}^{r} = \{(\tilde{u}, \tilde{p}, \tilde{v}, \tilde{q}, 1 +\epsilon(\Delta/\delta), 0)\}.
\end{equation*}
Notice
that $\mathcal{M}_{\delta}^{l}=\mathcal{M}_{0}^{l}$ and in the slow
variable the flow on $\mathcal{M}_{\delta}^{l}$ is also governed by
\begin{align*}
u_x&= p, \quad p_{x} = \sin u\cos v;\\ 
v_x&= q, \quad q_{x} = \sin v\cos u-\alpha \sin 2v.
\end{align*}
Whilst on $\mathcal{M}_{\delta}^{r}$ it is governed by
\begin{align*}
u_x&= p, \quad p_{x} = (1-d(1+\epsilon(\Delta/\delta)))\sin u\cos v;\\ 
v_x&= q, \quad q_{x} =  (1-d(1+\epsilon(\Delta/\delta)))\sin v\cos u-\alpha \sin2 v.
\end{align*}
Note that the flow on $\mathcal{M}_{0}^{r}$ is given by the above with
$\epsilon=0$.

Recall that $\mathcal{B}_{-\infty}=(0,0,0,0,0,0)$ and
$\mathcal{B}_{+\infty}=(2\pi,0,0,0,0,0)$ are saddle points. 
On the three dimensional stable manifold
$\mathcal{W}_{\delta}^{s}(\mathcal{B}_{+\infty})$ 
we can define the solution
\[
\bu_\delta^{s}(x;u,v):= (u^{s}_\delta(x;u,v),p_\delta^{s}(x;u,v),v^{s}_\delta(x;u,v),q_\delta^{s}(x;u,v),\rho_\delta(x),s_\delta(x))
\]
for $(u,v)$ near $(u_0^+,v_0^+)$. Whilst on the three dimensional unstable manifold  $\mathcal{W}_{\delta}^{u}(\mathcal{B}_{-\infty})$ we can define the solution
\[
\bu_\delta^{u}(x;u,v):= (u^{u}_\delta(x;u,v),p_\delta^{u}(x;u,v),v^{u}_\delta(x;u,v),q_\delta^{u}(x;u,v),\rho_\delta(x),s_\delta(x))
\]
for $(u,v)$ near $(u_0^-,v_0^-)$. Next we form manifolds by flowing $\bu_\delta^{s}(x;u,v)$ and $\bu_\delta^{u}(x;u,v)$ forwards respectively backwards:
\begin{eqnarray*}
  \mathcal{T}_\delta^s &=&
                           \{
		\bu^s_\delta(x;u,v)\mid (u,v) \mbox{ near } (u_0^+,v_0^+),\,
		x\in [0,\infty) \};\\
	\mathcal{T}_\delta^u &=&
	\{
	\bu^u_\delta(x;u,v)\mid (u,v) \mbox{ near }(u_0^-,v_0^-),\,
	x\in (-\infty,0] \}.
\end{eqnarray*}
Hence these manifolds are part of
$\mathcal{W}_{\delta}^{s}(\mathcal{B}_{+\infty})$ and
$\mathcal{W}_{\delta}^{u}(\mathcal{B}_{-\infty})$ respectively. See Figure~\ref{smoothmanifold} for a sketch of the manifolds $\mathcal{T}_\delta^{u/s}$.

We want to show that $\mathcal{T}_\delta^u$ converges to
$\mathcal{T}_0^u$ and $\mathcal{T}_\delta^s$ to $\mathcal{T}_0^s$ as
$\delta$ goes to~0.  Since $\rho_0^{\pm}$ are heteroclinic connections
and $\rho_\delta$ a homoclinic one, the parametrisation of these
orbits is different. Consequently, the $\xi$ parametrisation of
$\mathcal{T}_0^{u/s}$ is different from the $\xi=x/\delta$
parametrisation in $\mathcal{T}_\delta^{u/s}$.  But this is not
important as we want to compare them as manifolds. To show the
convergence, we split both manifolds $\mathcal{T}_\delta^{u/s}$ in
three parts. We have seen in Lemma \ref{fastslowlem} that
\[
|(\rho_\delta(x)-1,s_\delta(x))| =\mathcal{O}(e^{-\sqrt{2/\delta}}) \text{, for }
x\in(-\Delta+\sqrt{\delta},\Delta-\sqrt{\delta}).
\]
Hence we split the manifold $\mathcal{T}_\delta^{u}$ as follows
\begin{eqnarray*}
	\mathcal{T}_{\delta,\text{I}}^{u} &=&
	\{(\bu^u_\delta(x;u,v)\mid (u,v) \mbox{ near }(u_0^-,v_0^-),\,
	 x \leqslant -\Delta \};\\
	\mathcal{T}_{\delta,\text{II}}^{u} &=&
	\{(\bu^u_\delta(x;u,v)\mid (u,v) \mbox{ near }(u_0^-,v_0^-),\,
	-\Delta \leqslant x\leqslant -\Delta+\sqrt{\delta} \};\\
	\mathcal{T}_{\delta,\text{III}}^{u} &=&
	\{(\bu^u_\delta(x;u,v)\mid (u,v) \mbox{ near }(u_0^-,v_0^-),\,
	-\Delta+\sqrt{\delta} \leqslant x \leqslant 0 \}.
\end{eqnarray*}
Similarly, we split the manifold $\mathcal{T}_\delta^{s}$ as follows
\begin{eqnarray*}
	\mathcal{T}_{\delta,\text{I}}^{s} &=&
	\{(\bu^s_\delta(x;u,v)\mid (u,v) \mbox{ near }(u_0^+,v_0^+),\,
	x \geqslant \Delta \};\\
	\mathcal{T}_{\delta,\text{II}}^{s} &=&
	\{(\bu^s_\delta(x;u,v)\mid (u,v) \mbox{ near }(u_0^+,v_0^+),\,
	\Delta-\sqrt{\delta} \leqslant x\leqslant \Delta \};\\
	\mathcal{T}_{\delta,\text{III}}^{s} &=&
	\{(\bu^s_\delta(x;u,v)\mid (u,v) \mbox{ near }(u_0^+,v_0^+),\,
	0 \leqslant x \leqslant \Delta - \sqrt{\delta} \}.
\end{eqnarray*}
We have sketched all relevant manifolds in Figure~\ref{smoothmanifold}.
Next we will prove the convergence.
\begin{itemize}
\item Fenichel's second persistence theorem states that the stable and
  unstable manifolds $W^{s/u}(\mathcal{M}_{\delta}^{l})$ lie locally
  within $\mathcal{O}(\delta)$ of $W^{s/u}(\mathcal{M}_{0}^{l})$ and
  are invariant under the dynamics of the full fast system
  (\ref{fastslowfast}); see \cite{Fenichel1979, Jones1995}. The manifold
  $\mathcal{T}_{\delta,\text{I}}^{s}$ is a subset of
  $\mathcal{W}_{\delta, slow}^{s}(\mathcal{B}_{+\infty})\subset
  W^{s}(\mathcal{M}_{\delta}^{l})$ and 
  $\mathcal{T}_{\delta,\text{I}}^{u}$ is a subset of
  $\mathcal{W}_{\delta, slow}^{u}(\mathcal{B}_{-\infty})\subset
  W^{u}(\mathcal{M}_{\delta}^{l})$ for 
  $\delta > 0$. This implies that the manifolds
  $\mathcal{T}_{\delta,\text{I}}^{u/s}$ are order $\delta$ close to
  $\mathcal{T}_{0}^{u/s}$ for $\rho\in\left[0,1/2\right]$, i.e. $\vert x \vert \geqslant \Delta$ .
\item For $-\Delta \leq x \leq -\Delta + \sqrt{\delta}$, the slow
  variables can change at most order $\sqrt\delta$. Furthermore, the
  set
  $\{(\rho_\delta(x),s_\delta(x))\mid -\Delta \leqslant x \leqslant
  -\Delta +\sqrt{\delta}\}$ is order
  $\sqrt{\delta}$ close to
  $\{(\rho_0^+(\xi),s_0^+(\xi))\mid \xi\in[0,\infty)\}.$ Hence,
  we see that the manifolds $\mathcal{T}_{\delta,\text{II}}^{u}$
	are order
	$\sqrt{\delta}$ close to $\mathcal{T}_0^{u}$. A similar
        argument gives that the manifolds $\mathcal{T}_{\delta,\text{II}}^{s}$
	are order
	$\sqrt{\delta}$ close to $\mathcal{T}_0^{s}$.
        
      \item The previous observation also implies that the set
        $\{\bu^u_{\delta}(-\Delta+\sqrt{\delta};u,v)\mid (u,v) \mbox{
          near } (u_0^-,v_0^-)\}$ is order $\sqrt\delta$ close to the
        intersection
        $\mathcal{S}_0^u \cap \mathcal{S}_0^{u,r} \subset
        \mathcal{T}_0^u$. And the set
        $\{\bu_{\delta}^s(\Delta-\sqrt{\delta};u,v)\mid (u,v) \mbox{
          near } (u_0^+,v_0^+)\}$ is order $\sqrt\delta$ close to the
        intersection
        $\mathcal{S}_0^s \cap \mathcal{S}_0^{s,r} \subset
        \mathcal{T}_0^s$.  For
        $x\in[-\Delta+\sqrt\delta,\Delta-\sqrt\delta]$,
        $(\rho_\delta(x),s_\delta(x))$ is at least order
        $\exp(-\sqrt{2/\delta})$ close to $(1,0)$ (see
        Lemma~\ref{fastslowlem}), so on this finite spatial interval, the
        flow of the perturbed system is order
        $\exp(-\sqrt{2/\delta}) $ close to the flow on
        $\mathcal{M}_0^r$.  Hence the manifolds
        $\mathcal{T}_{\delta,\text{III}}^{u/s}$
	are order $\sqrt{\delta}$ close to $\mathcal{T}_0^{u/s}$.
\end{itemize}
So we can conclude that $\mathcal{T}_\delta^{u/s}$ are order
$\sqrt\delta$ close to $\mathcal{T}_0^{u/s}$ and hence
$\mathcal{T}_\delta^{u/s}$ converges to $\mathcal{T}_0^{u/s}$ when
$\delta$ goes to~0. Since $\mathcal{T}_0^{u}\vert_{x=0}$ and
$\mathcal{T}_0^{s}\vert_{x=0}$ intersect transversely,
$\mathcal{T}_\delta^{u}\vert_{x=0}$ and
$\mathcal{T}_\delta^{s}\vert_{x=0}$ will intersect transversely. Thus
we can conclude that the heteroclinic connection $\bm{F_0}$ persists for
$\delta$ small.
\end{section}


\begin{section}{Discussion} \label{discussion}
  In this paper, we have presented both an analytic and numerical
  investigation into the existence of stationary fronts in the system
  (\ref{coupledsystem}) for $0<\alpha <1/2.$ In Section
  \ref{numericalinvestigation} we showed numerically the existence of
  a pitchfork bifurcation at some $\alpha \in (0,1/2)$ for all
  $\Delta>0$ and $d> 0$. Then in Section \ref{diagramanalysis}, using
  the piecewise constant approximation $\rho_0(x;\Delta)$ given by
  (\ref{step}), we gave an implicit expression for the bifurcation
  locus when $d=1$ and approximations for the two cases $\Delta\gg1$
  and $d\gg1$. In Section \ref{coupledsys} we used the implicit
  expression for $d=1$ from Section \ref{diagramanalysis} to
  rigorously show existence of a non-zero $v(x)$ component using
  Lyapunov-Schmidt reduction. Finally, in Section~\ref{hamslowfast}
  using geometric singular perturbation theory, we showed that if
  fronts exist for the piecewise constant inhomogeneity
  $\rho_0(x;\Delta)$, they persist for a smooth sharp inhomogeneity
  $\rho_{\delta}(x;\Delta)$.
  
  The work in this paper provides a broader understanding of what
  happens to the destabilised front looked at by Braun {\it et
    al.\/}~\cite{Braun1988} in the coupled homogeneous system ($d=0$)
  and how the front can be stabilised in the coupled system using a
  spatial inhomogeneity. The effect of the spatial inhomogeneity is to
  stabilise the sine-Gordon front by a small perturbation where the
  $\psi$-component dips and the $\theta$-component rises around the
  centre of the transition. Exploring the effects of hat-like
  (smooth and non-smooth) inhomogeneities on fronts and their
  interaction is of practical interest, but also yields some
  interesting mathematical results, especially the rigorous proof of
  the persistence of fronts when going from non-smooth to smooth
  inhomogeneities. 

  Several interesting avenues for future work are possible. Since
  existence has been established, a natural question would be to
  consider the stability of the stationary fronts in the system
  (\ref{coupledsystem}). Time simulations shown in
  Figure~\ref{spacetimeplots} indicate that the pitchfork bifurcation
  is supercritical. It should be possible to verify this analytically
  using Lyapunov-Schmidt reduction, for example as developed for the
  stability of rolls in the Swift-Hohenberg
  equation~\cite{Mielke1995,Mielke1997}. Computing the stability with
  respect to forcing or damping terms would also be of practical
  interest. Alternatively, one could also consider the existence and
  stability of stationary solutions where both $u$ and $v$ connect to
  zero as $|x|\rightarrow\infty$ building on the work of of Derks {\it
    et al.}~\cite{Derks2011}.

\begin{figure}
	\captionsetup[subfigure]{labelformat=empty}
	\centering
	\subfloat[(a)]{\includegraphics[scale=0.4]{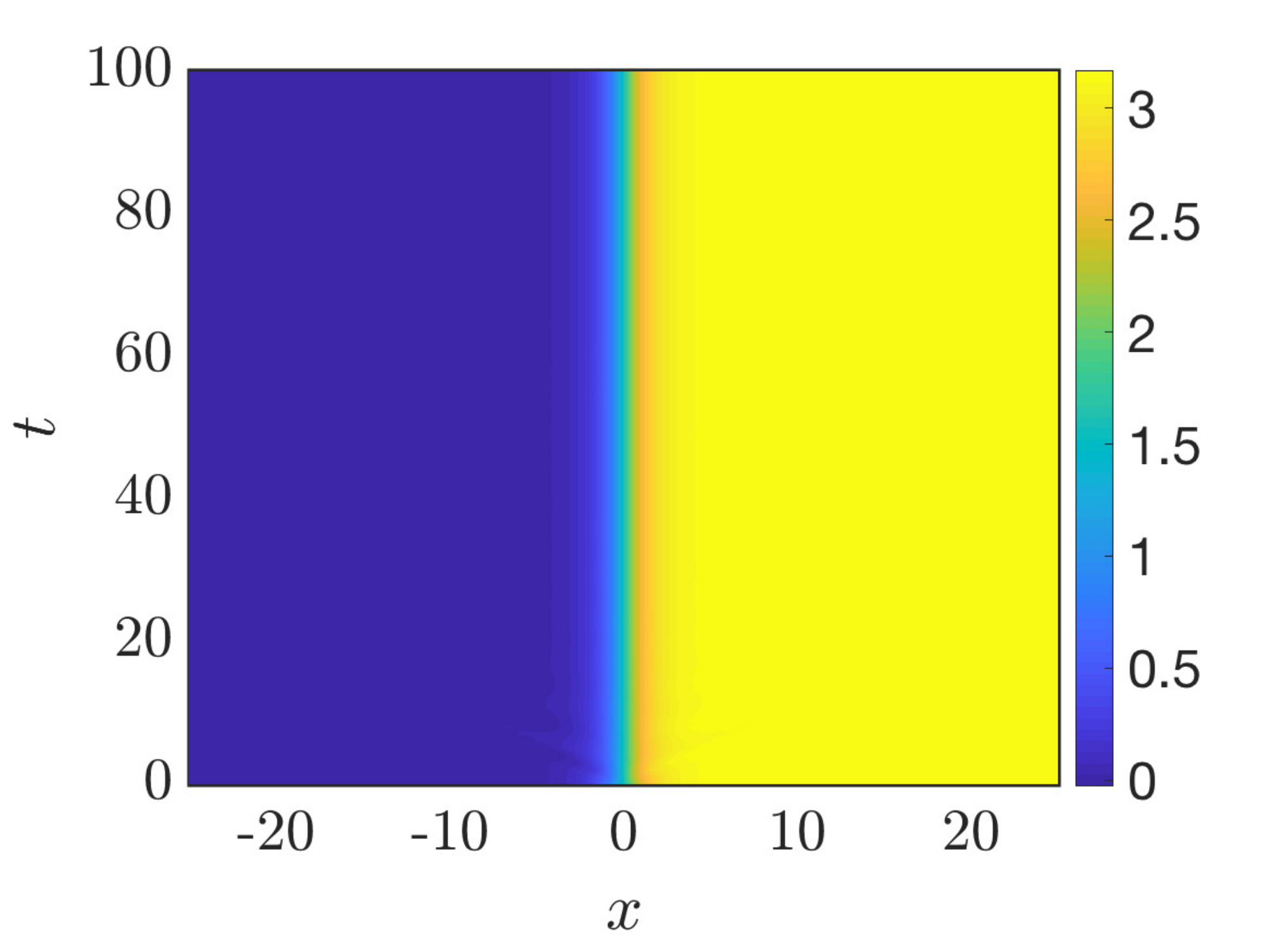}} 
	\subfloat[(b)]{\includegraphics[scale=0.4]{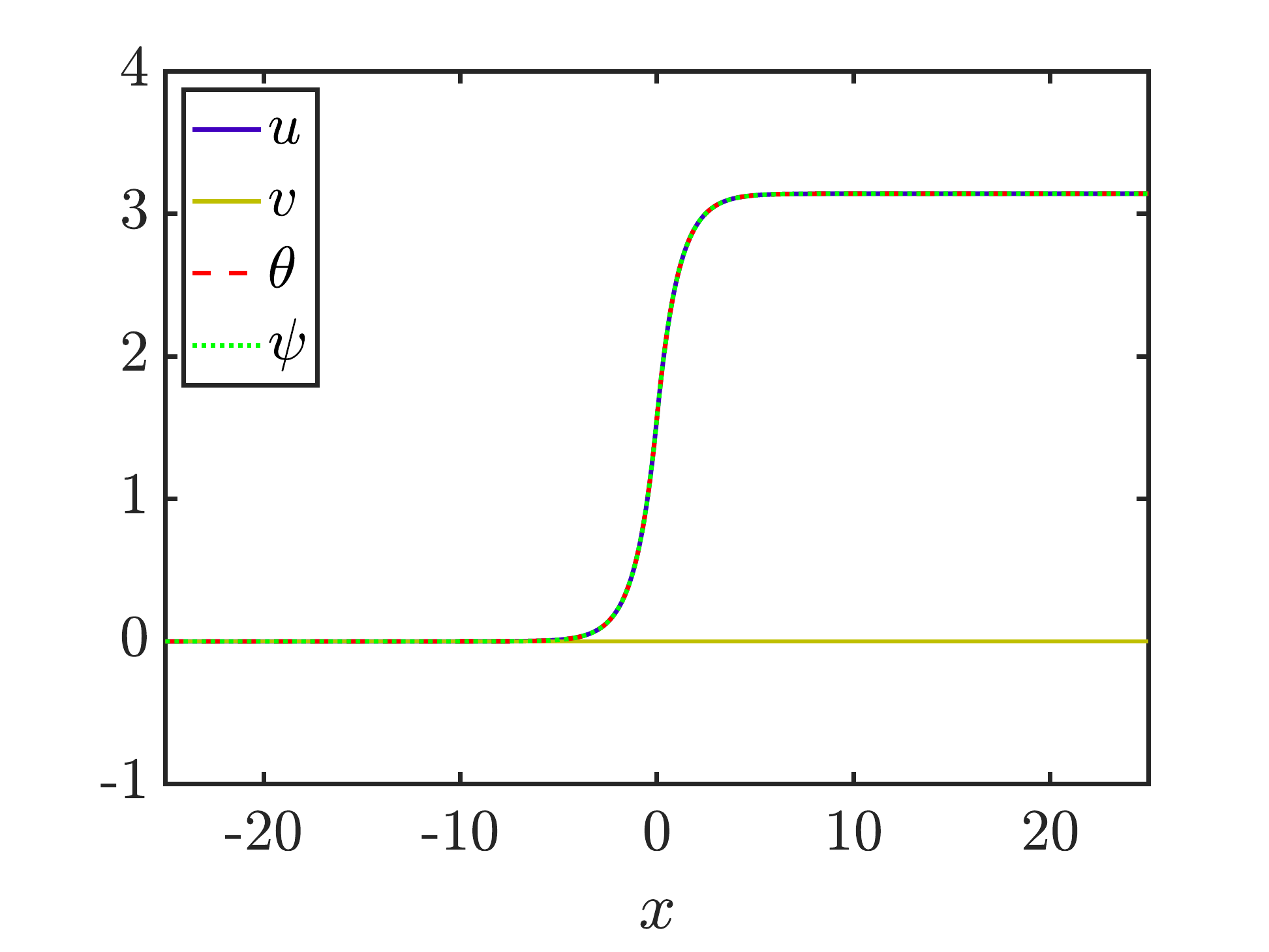}}\vspace{-14pt}\\
	\subfloat[(c)]{\includegraphics[scale=0.4]{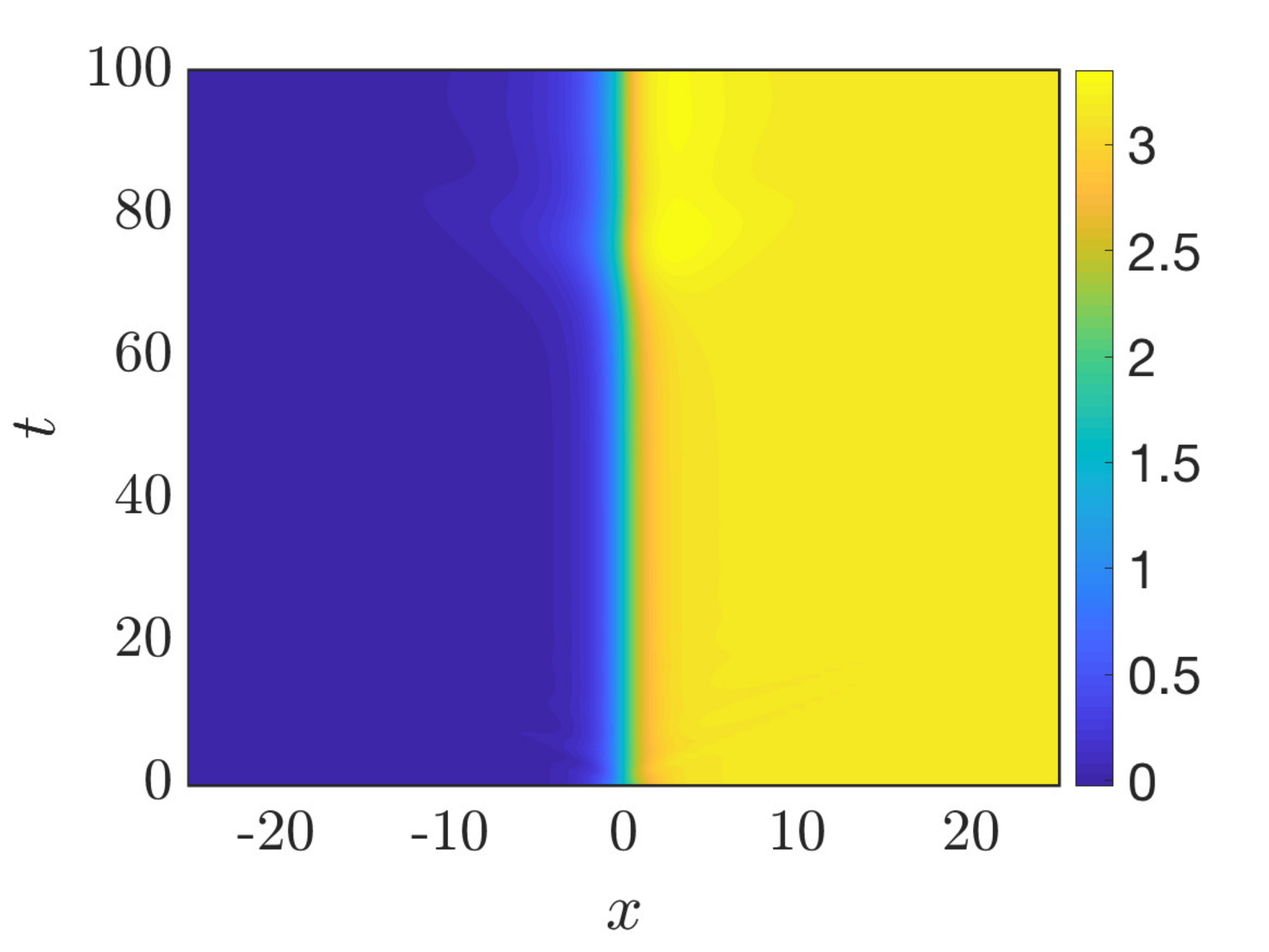}} 
	\subfloat[(d)]{\includegraphics[scale=0.4]{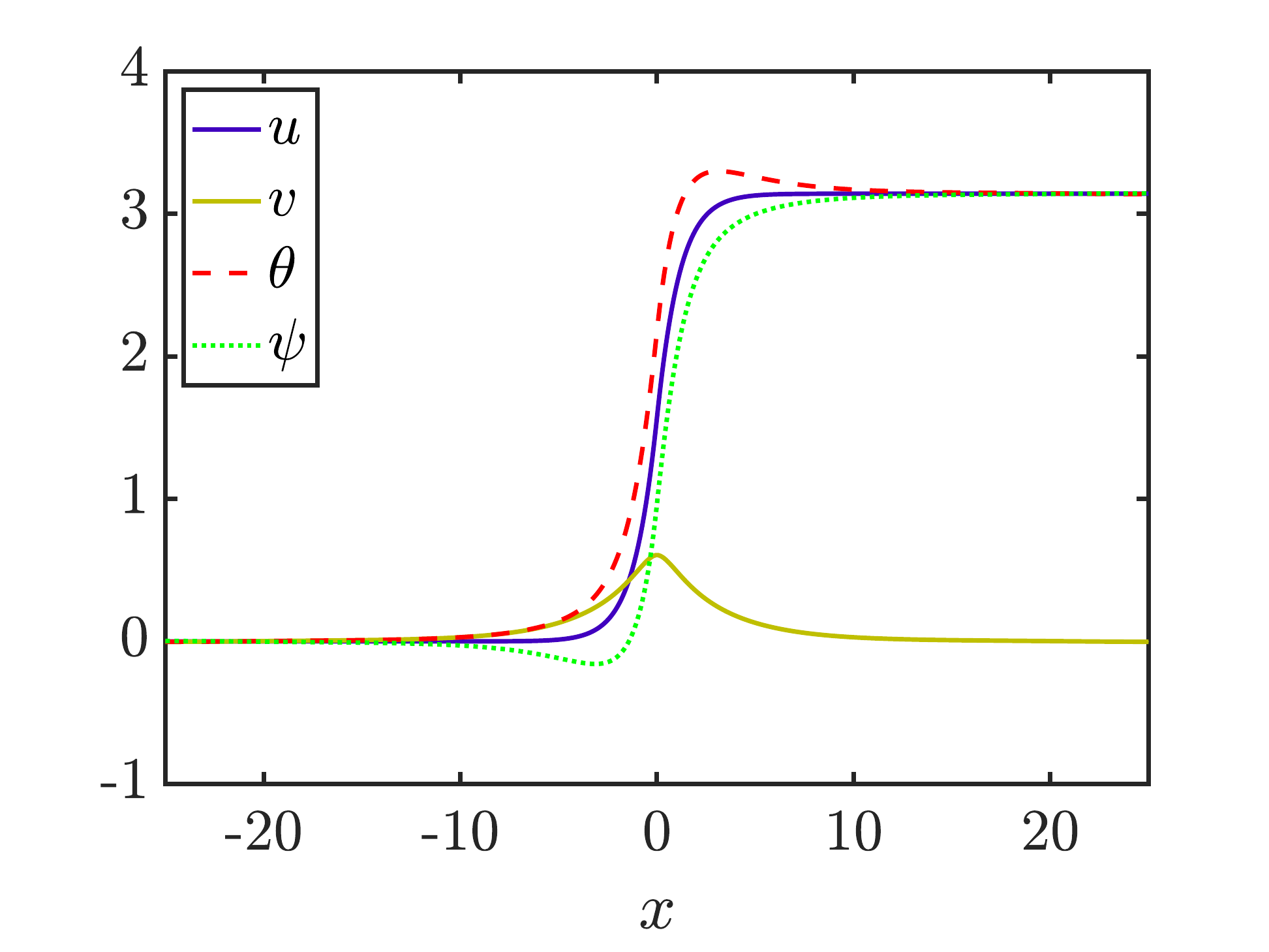}}\vspace{-14pt}\\
	\caption{This figure gives the dynamics of the coupled
		sine-Gordon system with the step
		inhomogeneity~\eqref{onejump} and $d=2$. (a)
		is a space time plot of $u(x,t=0)=u_\pi(x;2)$ with
		$\alpha=0.2$ and (b) is a plot of the
		solution profile in (a) at $t=100$. No bifurcation occurs and
		$u_\pi(x)$ is stable.  (c) is a space time
		plot of $u_\pi(x)$ with $\alpha=0.45$ and (d) is a plot of the solution profile at $t=100$. The
		$u_{\pi}(x)$ solution becomes unstable and bifurcates to a
		new solution. Note we have included a small damping term to
		suppress the additional radiation in the Hamiltonian
		system.\label{f:pi_kink}}
\end{figure} 

Another direction would be to explore the system with other
  inhomogeneities. The results in this paper can be extended to
  consider the existence of fronts in the system (\ref{coupledsystem})
  with a smooth ``step" inhomogeneity of the form
\begin{equation} \label{onesmoothjump} \rho_{\delta}(x) =
  \frac{\tanh\left(x/\delta\right)+1}{2}.
\end{equation}
Unlike the hat-like spatial inhomogeneities studied in this paper, the above
step has only one jump which is centred at $x=0.$ As $\delta \to 0$
the above converges pointwise to
\begin{equation} \label{onejump}
\rho_{0}(x)= 
\begin{cases}
0,&  x  <0,  \\
1,     &  x >0. 
\end{cases} 
\end{equation}
When $d>1$, the system (\ref{alphasys}) with the above piecewise
constant inhomogeneity and boundary conditions
$(u(-\infty),v(-\infty)) = (0,0)$ and
$(u(+\infty),v(+\infty)) = (\pi,0)$ is known to have solutions
$(u_{\pi},0)$ \cite{Derks2007} where
\begin{equation*}
	u_{\pi}(x;d) = 
	\begin{cases}
		4\arctan(e^{x+x_1}) ,&  x < 0, \\
		4\arctan(e^{\sqrt{d-1}x+x_2})-\pi,     &  x > 0,
	\end{cases}
	\label{pikink}
\end{equation*}
and $x_1(d)$ and $x_2(d)$ are matching constants
\begin{equation*}
	x_1(d) = \ln\bigg(\tan\bigg(\frac{\arccos((2-d)/d)}{4}\bigg)\bigg) \quad \text{and} \quad
	x_2(d) = \ln\bigg(\tan\bigg(\frac{\arccos((2-d)/d)+\pi}{4}\bigg)\bigg).
\end{equation*}
The bifurcation points of the solution $(u_\pi(x),0)$ in the full
system (\ref{alphasys}) with spatial inhomogeneity~(\ref{onejump}) are
given exactly by the implicit relation (\ref{BifDeltaLarge}) in
Section \ref{diagramanalysis}. Time simulations shown in
Figure~\ref{f:pi_kink} suggest the existence of a bifurcation whereby
the $v(x)$ component becomes non-zero, similar to the one studied in
this paper. With the explicit front solutions above, one can employ
Lyapunov-Schmidt reduction to show the existence of a pitchfork
bifurcation and the procedure is almost identical to the one completed
in Section~\ref{coupledsys}. Finally, it is possible to show
persistence of solutions for the smooth sharp
inhomogeneity~(\ref{onesmoothjump}) following ideas in Section
\ref{hamslowfast}. Setting $\epsilon = 0$ in
(\ref{Gardner}) means the heteroclinic connections in the fast reduced
system persist when $0<\delta \ll 1.$ One then can consider the flow
along the stable and unstable manifolds on $\mathcal{M}_\delta^l$ and
$\mathcal{M}_\delta^r$ respectively and apply Fenichel's theorems to
prove persistence.

Another extension is the generalisation of the smoothening results for
an arbitrary smooth sharp inhomogeneity. In this paper we restricted
ourselves to using the dynamics of (\ref{Gardner}) to describe
the spatial inhomogeneity. However, the ideas extend to any
Hamiltonian system that has a bifurcation from a heteroclinic to a
homoclinic. The work in Section~\ref{hamslowfast} gives a framework to
generalise the smoothening result and prove persistence with respect to a general
class of perturbations. A further extension would be to generalise the
smoothening result to any system of semi-linear wave equations with spatial
inhomogeneities.

{\bf Acknowledgements} The authors would like to thank Arjen Doelman
for inspiring discussions on this problem and the referees for their constructive feedback. JB acknowledges the EPSRC
whose institutional Doctoral Training Partnership grant (EP/N509772/1)
helped fund his PhD.

The authors confirm that data underlying the findings are available
without restriction. Details of the data and how to request access are
available via the University of Surrey publications repository.

\end{section}


\appendix
\section{Variation of parameters}\label{appA} 

Here we determine an expression for $V_{21}$ as required in (\ref{c}). Upon substituting $V_{11} = V_{22} = 0,$ which were determined at $\mathcal{O}(A),$ into (\ref{linear2}) yields
\begin{equation*}
	(I-Q)\bigg(\mathcal{L}_0\bm{V_2} +\tilde{\alpha}\mathcal{L}_1\bm{V_2}
	+(1-\rho)\left(
	\begin{array}{ c }
	\Psi^2\sin(u_0)/2\\
	0
	\end{array} \right)\bigg) = \mathbf{0}.
\end{equation*} 
We are interested in the first component $V_{21},$ of the vector $\bm{V_2},$ which by the above is governed by 
\begin{equation*}
\frac{d^2Y}{dx^2} - (1-\rho)\cos(u^*)Y = -\frac{1}{2}(1-\rho)\sin(u_0)\Psi^2. 
\end{equation*}
where we have set $Y = V_{21}$ for notational convenience. To solve this second order ODE we are required to use variation of parameters. Since the integral (\ref{c}) is over the interval $[\Delta,\infty)$ it is only necessary to compute $Y(x)$ in the region $x>\Delta.$ Hence we seek to solve,
\begin{equation}
\frac{d^2A}{dx^2} - \cos(u_0)A = -\frac{1}{2}\sin(u_0)\Psi^2. \label{fullODE}
\end{equation}
where 
\begin{equation*}
u_0 = 4\arctan(e^{x-x^*}),
\end{equation*}
\begin{equation*}
\Psi = -R(\tanh(x-x^*)+\sqrt{1-2\alpha^*})\exp(-\sqrt{1-2\alpha^*}(x-x^*)).
\end{equation*}
The second order ODE
\begin{equation*}
\frac{d^2Y}{dx^2} - \cos(u_0)Y = 0
\end{equation*}
has two linearly independent solutions (see e.g.~\cite{Derks2007}) given by 
\begin{equation*}
Y_{1}(x) = \sech(x-x^*) \quad \text{and} \quad Y_{2}(x) = \sinh(x-x^*)+\frac{x-x^*}{\cosh(x-x^*)}.
\end{equation*}
Using the variation of parameters method one determines
\begin{multline*}
Y(x) = \frac{Y_1(x)}{4}\int^{x}Y_2(\bar{x})\sin(u_0(\bar{x}))\Psi^2(\bar{x})d\bar{x}
\\-\frac{Y_2(x)}{4}\int^{x}Y_1(\bar{x})\sin(u_0(\bar{x}))\Psi^2(\bar{x})d\bar{x} 
+AY_1(x)+BY_2(x),
\end{multline*}
where constants $A$ and $B$ are determined. It can be shown from the boundary conditions $Y(+\infty) = Y'(+\infty) = 0$ that $B = 0.$ Finally since $Y(x)$ is an odd function and $Y_1(x)$ is even we must have $A=0.$ Hence,
\begin{equation*}
V_{21}(x) = \frac{Y_1(x)}{4}\int^{x}Y_2(\bar{x})\sin(u_0(\bar{x}))\Psi^2(\bar{x})d\bar{x}
\\-\frac{Y_2(x)}{4}\int^{x}Y_1(\bar{x})\sin(u_0(\bar{x}))\Psi^2(\bar{x})d\bar{x}.
\end{equation*}
\section{Expressions for the eigenfunction}\label{appB}
Here we determine the matching constant $A$ and the rescaling constant $R$ of the eigenfunction (\ref{eigenfunctionq=1}). Matching the eigenfunction (\ref{eigenfunctionq=1}) at $x=-\Delta$ yields
\begin{equation*}
A = -\frac{\bigg(\sqrt{1+\Lambda}+\sqrt{\frac{2-h}{2}}\bigg)\left(\tan\left(\arccos(1-h)/4\right)\right)^{\sqrt{1+\Lambda}}}{\cos\left(\sqrt{-\Lambda}\Delta\right)}.
\end{equation*}
Now we wish to find the rescaling constant such that $\int_{-\infty}^{+\infty}\Psi^2=1.$ Since $\Psi$ is an even function,
\begin{equation*}
\frac{1}{R^2}= A^2\bigg(\frac{\sin(2\sqrt{-\Lambda} \Delta)}{2\sqrt{-\Lambda}}+\Delta\bigg) +2\int_{\Delta}^{\infty}e^{-2\sqrt{1+\Lambda}(x-x^{*})}(\tanh(x-x^{*})+\sqrt{1+\Lambda})^2dx. 
\end{equation*}
Hence
\begin{multline*}
\frac{1}{R^2} = A^2\bigg(\frac{\sin(2\sqrt{2\alpha^*} \Delta)}{2\sqrt{2\alpha^*}}+\Delta\bigg)
 \\ +2\bigg[\frac{(1-2\alpha+(1-\alpha)\sqrt{1-2\alpha})e^{-2\sqrt{1-2\alpha}(x-x^*)}}{(e^{2(x-x1)}+1)\alpha^2(2\alpha-1)}
\\
 \times (\alpha^2e^{2(x+x^*)}+2(\alpha-1)\sqrt{1-2\alpha}+\alpha^2-4\alpha+2)\bigg]_{\Delta}^{\infty}.
\end{multline*}

\bibliographystyle{plain}
\bibliography{stationary}

\end{document}